\def\a{\alpha}
\def\b{\beta}
\def\G{\Gamma}
\def\lb\{{\left\{}
\def\la{\lambda}
\def\La{\Lambda}
\def\lla{\longleftarrow}
\def\lm{\limits}
\def\lra{\longrightarrow}
\def\dllra{\Longleftrightarrow}
\def\llra{\longleftrightarrow}
\def\n{\nabla}
\def\ngth{\negthickspace}
\def\ola{\overleftarrow}
\def\Om{\Omega}
\def\om{\omega}
\def\op{\oplus}
\def\oper{\operatorname}
\def\oplm{\operatornamewithlimits}
\def\ora{\overrightarrow}
\def\ov{\overline}
\def\ova{\overarrow}
\def\ox{\otimes}
\def\p{\partial}
\def\rb\}{\right\}}
\def\s{\sigma}
\def\sbq{\subseteq}
\def\spq{\supseteq}
\def\sqp{\sqsupset}
\def\supth{{\text{th}}}
\def\T{\Theta}
\def\th{\theta}
\def\tl{\tilde}
\def\thra{\twoheadrightarrow}
\def\un{\underline}
\def\ups{\upsilon}
\def\vp{\varphi}
\def\wh{\widehat}
\def\wt{\widetilde}
\def\x{\times}
\def\z{\zeta}
\def\({\left(}
\def\){\right)}
\def\[{\left[}
\def\]{\right]}
\def\<{\left<}
\def\>{\right>}
\def\tec{Teichm\"uller\ }
\def\sconr{\hbox{\medspace\vrule width 0.4pt height 4.7pt depth
0.4pt \vrule width 5pt height 0pt depth 0.4pt\medspace}}
\def\SA{\mathcal A}
\def\SB{\mathcal B}
\def\SC{\mathcal C}
\def\SD{\mathcal D}
\def\SE{\mathcal E}
\def\SF{\mathcal F}
\def\SG{\mathcal G}
\def\SH{\mathcal H}
\def\SI{\mathcal I}
\def\SJ{\mathcal J}
\def\SK{\mathcal K}
\def\SL{\mathcal L}
\def\SM{\mathcal M}
\def\SN{\mathcal N}
\def\SO{\mathcal O}
\def\SP{\mathcal P}
\def\SQ{\mathcal Q}
\def\SR{\mathcal R}
\def\SS{\mathcal S}
\def\ST{\mathcal T}
\def\SU{\mathcal U}
\def\SV{\mathcal V}
\def\SW{\mathcal W}
\def\SX{\mathcal X}
\def\SY{\mathcal Y}
\def\SZ{\mathcal Z}
\newcommand{\BA}{\ensuremath{\mathbf A}}
\newcommand{\BB}{\ensuremath{\mathbf B}}
\newcommand{\BC}{\ensuremath{\mathbf C}}
\newcommand{\BD}{\ensuremath{\mathbf D}}
\newcommand{\BE}{\ensuremath{\mathbf E}}
\newcommand{\BF}{\ensuremath{\mathbf F}}
\newcommand{\BG}{\ensuremath{\mathbf G}}
\newcommand{\BH}{\ensuremath{\mathbf H}}
\newcommand{\BI}{\ensuremath{\mathbf I}}
\newcommand{\BJ}{\ensuremath{\mathbf J}}
\newcommand{\BK}{\ensuremath{\mathbf K}}
\newcommand{\BL}{\ensuremath{\mathbf L}}
\newcommand{\BM}{\ensuremath{\mathbf M}}
\newcommand{\BN}{\ensuremath{\mathbf N}}
\newcommand{\BO}{\ensuremath{\mathbf O}}
\newcommand{\BP}{\ensuremath{\mathbf P}}
\newcommand{\BQ}{\ensuremath{\mathbf Q}}
\newcommand{\BR}{\ensuremath{\mathbf R}}
\newcommand{\BS}{\ensuremath{\mathbf S}}
\newcommand{\BT}{\ensuremath{\mathbf T}}
\newcommand{\BU}{\ensuremath{\mathbf U}}
\newcommand{\BV}{\ensuremath{\mathbf V}}
\newcommand{\BW}{\ensuremath{\mathbf W}}
\newcommand{\BX}{\ensuremath{\mathbf X}}
\newcommand{\BY}{\ensuremath{\mathbf Y}}
\newcommand{\BZ}{\ensuremath{\mathbf Z}}
\newcommand{\rank}{\operatorname{rank}}
\def\bba{{\mathbb A}}
\def\bbb{{\mathbb B}}
\def\bbc{{\mathbb C}}
\def\bbd{{\mathbb D}}
\def\bbe{{\mathbb E}}
\def\bbf{{\mathbb F}}
\def\bbg{{\mathbb G}}
\def\bbh{{\mathbb H}}
\def\bbi{{\mathbb I}}
\def\bbj{{\mathbb J}}
\def\bbk{{\mathbb K}}
\def\bbl{{\mathbb L}}
\def\bbm{{\mathbb M}}
\def\bbn{{\mathbb N}}
\def\bbo{{\mathbb O}}
\def\bbp{{\mathbb P}}
\def\bbq{{\mathbb Q}}
\def\bbr{{\mathbb R}}
\def\bbs{{\mathbb S}}
\def\bbt{{\mathbb T}}
\def\bbu{{\mathbb U}}
\def\bbv{{\mathbb V}}
\def\bbw{{\mathbb W}}
\def\bbx{{\mathbb X}}
\def\bby{{\mathbb Y}}
\def\bbz{{\mathbb Z}}
\newtheorem*{Whitney towers}{Theorem~\ref{Whitney towers}}
\newtheorem*{h-towers}{Theorems ~\ref{half} \& \ref{$(n)$-solvable}}
\newtheorem*{surgery curves}{Theorem~\ref{surgery curves}}
\newtheorem*{cg=0}{Theorem~\ref{vanish}}
\newtheorem{thm}{Theorem}[section]
\newtheorem{lem}[thm]{Lemma}
\newtheorem{cor}[thm]{Corollary}
\newtheorem{prop}[thm]{Proposition}
\newtheorem{defn}[thm]{Definition}
\newtheorem{rem}[thm]{Remark}
\newtheorem{ex}[thm]{Example}
\numberwithin{equation}{section}
\numberwithin{figure}{section}
\newcommand{\spec}{\operatorname{spec}}
\newcommand{\Herm}{\operatorname{Herm}}
\newcommand{\GL}{\operatorname{GL}}
\newcommand{\Ker}{\operatorname{Ker}}
\newcommand{\End}{\operatorname{End}}
\newcommand{\Hom}{\operatorname{Hom}}
\newcommand{\Ext}{\operatorname{Ext}}
\newcommand{\Rep}{\operatorname{Rep}}
\newcommand{\Tors}{\operatorname{Tors}}
\newcommand{\dom}{\operatorname{dom}}
\newcommand{\tr}{\operatorname{tr}}
\newcommand{\id}{\operatorname{id}}
\newcommand{\spin}{\operatorname{Spin}}
\newcommand{\Z}{\mathbb{Z}}
\newcommand{\N}{\mathbb{N}}
\newcommand{\C}{\mathbb{C}}
\newcommand{\Q}{\mathbb{Q}}
\newcommand{\K}{\mathbb{K}}
\newcommand{\R}{\mathbb{R}}
\newcommand{\F}{\mathbb{F}}
\newcommand{\RR}{\mathcal{R}}
\newcommand{\UU}{\mathcal{U}}
\newcommand{\NN}{\mathcal{N}}
\newcommand{\DD}{\mathcal{D}}
\newcommand{\KK}{\mathcal{K}}
\newcommand{\FF}{\mathcal{F}}
\newcommand{\MM}{\mathcal{M}}
\newcommand{\LL}{\mathcal{L}}
\newcommand{\CC}{\mathcal{C}}
\newcommand{\QQ}{\mathcal{Q}}
\newcommand{\PP}{\mathcal{P}}
\newcommand{\sra}{\twoheadrightarrow}
\newcommand{\ira}{\rightarrowtail}
\newcommand{\sd}{\rtimes}
\newcommand{\ra}{\longrightarrow}
\newcommand{\lr}{\longleftrightarrow}
\def\x{\times}
\def\p{\partial}
\def\ov{\overline}
\def\Om{\Omega}
\def\s{\sigma}
\def\lra{\longrightarrow}
\renewcommand{\SS}{\mathcal{S}}
\renewcommand{\AA}{\mathcal{A}}
\renewcommand{\l}{\ell}
\renewcommand{\a}{\alpha}
\renewcommand{\i}{\iota}
\renewcommand{\b}{\beta}
\newcommand{\torsionp}{\Z_{(p)}/\Z}
\newcommand{\defeq}{\stackrel{\mathrm{def}}{=}}
\newcommand{\2}{\Z[\pi/\pi^{(2)}]}
\newcommand{\gu}{\G_n^U}
\newcommand{\go}{\G_0^U}
\newcommand{\1}{\pi_1}
\newcommand{\rk}{\text{rank}}
\renewcommand{\dgeverylabel}{\displaystyle}
\title{Knot Concordance and Higher-Order Blanchfield Duality}
\author{Tim D. Cochran$^{\dag}$}
\address{Department of Mathematics, Rice University, Houston, Texas, 77005-1892}
\email{cochran@rice.edu}
\author{Shelly Harvey$^{\dag\dag}$}
\address{Department of Mathematics, Rice University, Houston, Texas, 77005-1892}
\email{shelly@rice.edu}
\author{Constance Leidy}
\address{Wesleyan University, Wesleyan Station, Middletown, CT 06459}
\email{cleidy@wesleyan.edu}
\thanks{\noindent $^{\dag}$The author was partially supported by the NSF DMS-0406573 and DMS-0706929}
\thanks{ $^{\dag\dag}$The author was partially supported
by NSF DMS-0539044 and The Alfred P. Sloan Foundation}
\begin{document}
\begin{abstract} In 1997, T. Cochran, K. Orr, and P. Teichner \cite{COT} defined a filtration
of the classical knot concordance group $\mathcal{C}$,
$$
\cdots \subseteq \mathcal{F}_{n} \subseteq \cdots \subseteq
\mathcal{F}_1\subseteq \mathcal{F}_{0.5} \subseteq \mathcal{F}_{0} \subseteq \mathcal{C}.
$$
The filtration is important because of its strong connection to the classification of topological $4$-manifolds. Here we introduce new techniques for studying $\mathcal{C}$ and use them to prove that, for each $n \in \mathbb{N}_0$, the group $\mathcal{F}_{n}/\mathcal{F}_{n.5}$ has infinite rank. We establish the same result for the corresponding filtration of the smooth concordance group. We also resolve a long-standing question as to
whether certain natural families of knots, first considered by Casson-Gordon, and Gilmer, contain slice knots.
\end{abstract}
\maketitle

\section{Introduction}\label{sec:Introduction}

A (classical) \textbf{knot} $J$ is the image of a tame embedding of an oriented circle in $S^3$. A \textbf{slice knot} is a knot that bounds an embedding of a $2$-disk in $B^4$. We wish to consider both the \emph{smooth} category and the \emph{topological} category (in the latter case all embeddings are required to be flat). The question of which knots are slice knots was first considered by Kervaire and Milnor in the early $60's$ in their study of isolated singularities of $2$-spheres in $4$-manifolds in the context of a surgery-theoretic scheme for classifying $4$-dimensional manifolds. Indeed, certain concordance problems are known to be \emph{equivalent} to whether the surgery techniques that were so successful in higher-dimensions, ``work'' for topological $4$-manifolds ~\cite{CF}. Thus the question of which knots are slice knots lies at the heart of the topological classification of $4$-dimensional manifolds. Moreover the question of which knots are topologically slice but not smoothly slice may be viewed as ``atomic'' for the question of which topological $4$-manifolds admit distinct smooth structures.

There is an equivalence relation on knots wherein slice knots are equivalent to the trivial knot. Two knots, $J_0\hookrightarrow S^3\times \{0\}$ and $J_1\hookrightarrow S^3\times \{1\}$, are \textbf{concordant} if there exists a proper embedding of an annulus into $S^3\times [0,1]$ that restricts to the knots on $S^3\times \{0,1\}$. A knot is concordant to a trivial knot if and only if it is a slice knot. The connected sum operation endows the set of all concordance classes of knots with the structure of an abelian group, called the \textbf{topological knot concordance group}, $\mathcal{C}$, which is a quotient of its smooth analogue $\mathcal{C}^s$. For excellent surveys see ~\cite{Go1} and ~\cite{Li1}.

In this paper we introduce new techniques for showing knots are not topologically slice (and hence also not smoothly slice). As one application we resolve a long-standing question about whether certain natural families of knots contain non-slice knots (some of these results were announced in ~\cite{CHL1}). As another major application we establish that each quotient, $\mathcal{F}_{n}/\mathcal{F}_{n.5}$, in the Cochran-Orr-Teichner filtration $\{\mathcal{F}_{n}\}$ of $\mathcal{C}$, has infinite rank (the same result is shown for the filtration of $\mathcal{C}^s$). This was previously known only for $n=0,1,$ and $2$. Our proof of the latter avoids two ad hoc technical tools employed by Cochran-Teichner, one of which was a deep analytical bound of Cheeger-Gromov for their von Neumann $\rho$ invariants.

In the late $60$'s Levine \cite{L5} (see also ~\cite{Sto}) defined an epimorphism from $ \mathcal{C}$ to $\mathbb{Z}^\infty \oplus \mathbb{Z}_2^\infty \oplus \mathbb{Z}_4^\infty$, given by the Arf invariant, certain discriminants and twisted signatures associated to the infinite cyclic cover of the knot complement. A knot for which these invariants vanish is called an \textbf{algebraically slice knot}. Thus the question at that time was ``Is every algebraically slice knot actually a slice knot?'' A simple way to create potential counterexamples is to begin with a known slice knot, $R$, such as the $9_{46}$ knot shown on the left-hand side of Figure~\ref{fig:ribbonCG}, and ``tie the bands into some knot $J_0$'', as shown schematically on the right-hand side of Figure~\ref{fig:ribbonCG}. An example of a band tied into a trefoil knot is shown in Figure~\ref{fig:knottedband}. All of these genus one knots are algebraically slice since they have the same Seifert matrix as the slice knot $R$. Similar knots have appeared in the majority of papers on this subject (for example ~\cite{Li1}\cite{Li5}\cite{Li7}\cite{Li10}\cite{GL1}).

\begin{figure}[htbp]
\setlength{\unitlength}{1pt}
\begin{picture}(327,151)
\put(0,0){\includegraphics{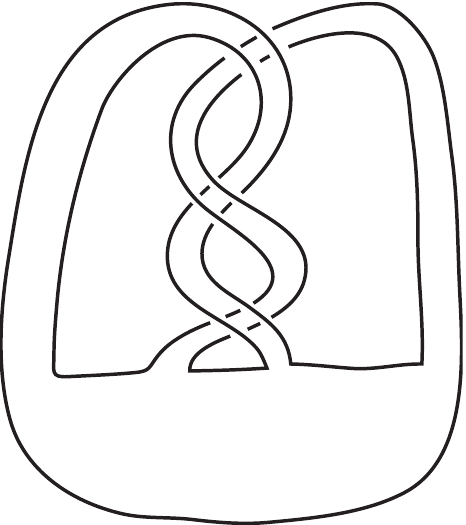}}
\put(184,0){\includegraphics{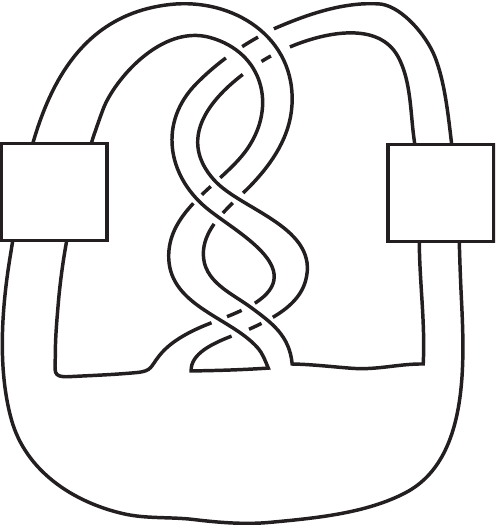}}
\put(196,93){$J_0$}
\put(307,93){$J_0$}
\put(154,93){$J_1\equiv$}
\put(-30,93){$R\equiv$}
\end{picture}
\caption{Algebraically Slice Knots $J_1$ Patterned on the Slice Knot $R$}\label{fig:ribbonCG}
\end{figure}
\begin{figure}[htbp]
\setlength{\unitlength}{1pt}
\begin{picture}(165,151)
\put(0,0){\includegraphics{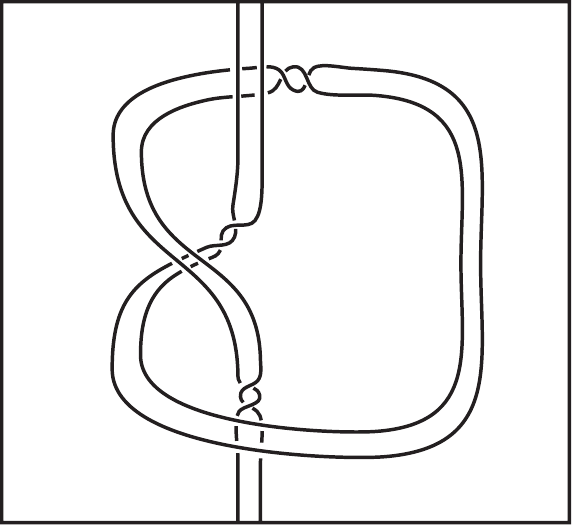}}
\end{picture}
\caption{Tying a band into a trefoil knot}\label{fig:knottedband}
\end{figure}
In the early $70$'s Casson and Gordon defined new knot concordance invariants via dihedral covers \cite{CG1}~\cite{CG2}. These ``higher-order signature invariants'' were used to show that some of the knots $J_1$ of Figure~\ref{fig:ribbonCG} are not slice knots. P. Gilmer showed that these higher-order signature invariants for $J_1$ are equal to certain combinations of classical signatures of $J_0$ and thus the latter constituted higher-order obstructions to $J_1$ being a slice knot \cite{Gi3}\cite{Gi5}~(see ~\cite{Li6} for $2$-torsion invariants). These invariants were also used to show that the subgroup of algebraically slice knots has infinite rank ~\cite{Ji1}. Hence the question arose:``What if $J_0$ itself were algebraically slice?'' Thus shortly after the work of Casson and Gordon the self-referencing family of knots shown in Figure~\ref{fig:family} was considered by Casson, Gordon, Gilmer and others ~\cite{Gi1}. An example with $n=3$ and $J_0=U$, the unknot, is shown in Figure~\ref{fig:R3}.

\begin{figure}[htbp]
\setlength{\unitlength}{1pt}
\begin{picture}(143,151)
\put(0,0){\includegraphics{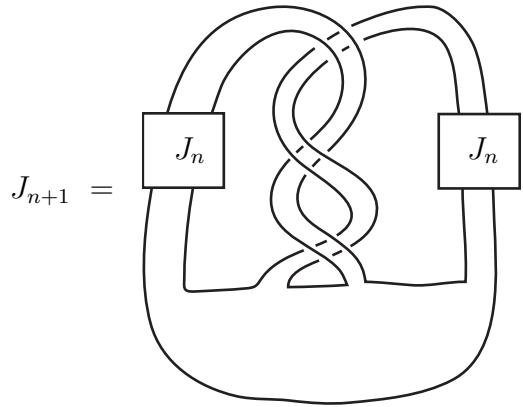}}
\put(-50,78){$J_{n+1}~=$}
\put(12,93){$J_{n}$}
\put(123,93){$J_{n}$}
\end{picture}
\caption{The recursive family $J_{n+1}, n\geq 0$}\label{fig:family}
\end{figure}

\begin{figure}[htbp]
\setlength{\unitlength}{1pt}
\begin{picture}(192,266)
\put(0,0){\includegraphics{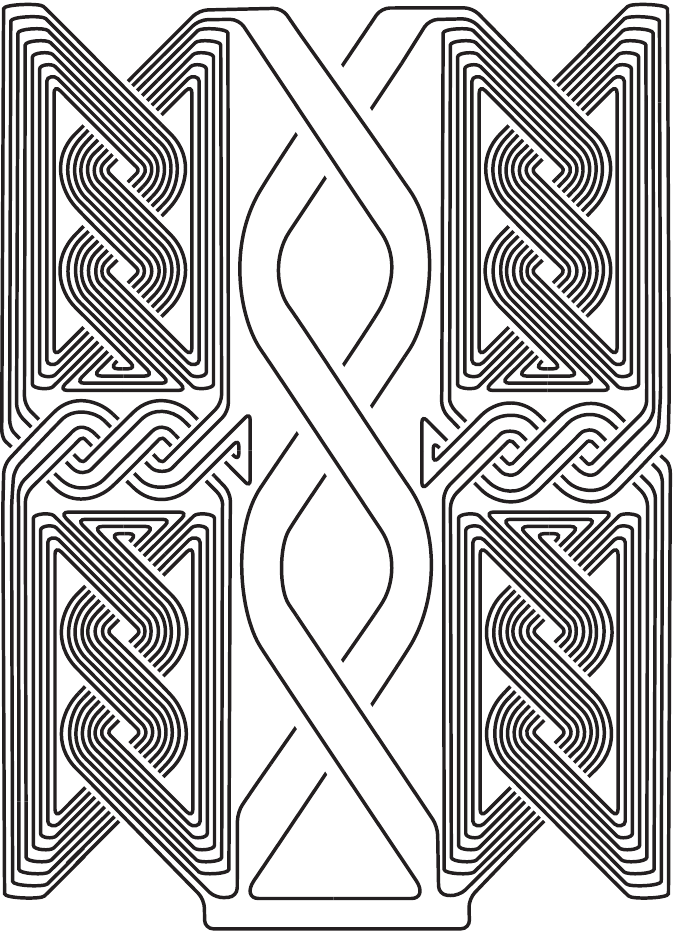}}
\end{picture}
\caption{The Ribbon Knot $J_3$ for $J_0=$ the unknot}\label{fig:R3}
\end{figure}
All of the invariants above vanish for $J_n$ if $n\geq 2$ and it is not difficult to see that if $J_0$ is itself a slice knot then each $J_n$ is a slice knot. It was asked whether or not $J_n$ is always a slice knot. In fact, Gilmer proved (unpublished) that $J_2$, for certain $J_0$, is not a \emph{ribbon knot} ~\cite{Gi1}. However the status of the knots $J_n$ has remained open for $25$ years. Much more recently, Cochran, Orr and Teichner, Friedl, and Kim used higher-order signatures associated to solvable covers of the knot complement to find non-slice knots that could not be detected by the invariants of Levine or Casson-Gordon \cite{COT}\cite{COT2}\cite{Ki1}\cite{Fr2}. In fact the techniques of \cite{COT}, ~\cite{COT2}, ~\cite{CT} and ~\cite{CK} were limited to knots of genus at least $2$ (note each $J_n$ has genus $1$) because of their use of localization techniques.

Recall that to each knot $K$ and each point on the unit circle in $\mathbb{C}$, Levine associated a signature. This endows each knot with an integral-valued  \textbf{signature function} defined on the circle. Let $\mathbf{\rho_0(K)}$ denote the integral of this function over the unit circle, normalized to have length $1$. This should be viewed as the average of the Levine signatures for $K$.

We prove:

\newtheorem*{thm:main}{Theorem~\ref{thm:main}}
\begin{thm:main} There is a constant $C$ such that if $|\rho_0(J_0)|>C$, then for each $n\ge 0$, $J_n$ is of infinite order in the topological and smooth knot concordance groups. Furthermore, there is constant $D$ such that if $J_2$ is a slice knot then $\rho_0(J_0)\in\{0,D\}$ (Theorem~\ref{thm:J2notslice}). \end{thm:main}

\noindent This was classically known only for $n=0,1$, using the Levine signatures and Casson-Gordon invariants respectively. The constant $D$ is a specific real number associated to the $9_{46}$ knot that may in fact be $0$.

In 1997, T. Cochran, K. Orr, and P. Teichner \cite{COT} defined an important filtration
of the classical knot concordance group $\mathcal{C}$,
$$
\cdots \subseteq \mathcal{F}_{n} \subseteq \cdots \subseteq
\mathcal{F}_1\subseteq \mathcal{F}_{0.5} \subseteq \mathcal{F}_{0} \subseteq \mathcal{C}.
$$
The elements of $\mathcal{F}_{n}$ are called the \textbf{$(n)$-solvable knots}. This filtration is geometrically significant because it measures the successive failure of the Whitney trick for $2$-disks in $4$-manifolds and hence is closely related to Freedman's topological classification scheme for $4$-dimensional manifolds. The filtration is also natural because it exhibits all of the previously known concordance invariants in its associated graded quotients of low degree: $\mathcal{F}_{0}$ is precisely the set of
knots with Arf invariant zero, $\mathcal{F}_{0.5}$ is precisely Levine's subgroup of algebraically slice knots, and $\mathcal{F}_{1.5}$ contains all knots with vanishing Casson-Gordon invariants. The filtration was also shown to be non-trivial: \cite{COT2} established that the abelian group $\mathcal{F}_{2}/\mathcal{F}_{2.5}$ has infinite rank; Cochran-Teichner showed in \cite{CT} that each of the groups $\mathcal{F}_{n}/\mathcal{F}_{n.5}$ has rank at least $1$.

Our second major result (known previously for $n=0,1,2$) is:

\begin{thm} For each $n \in \mathbb{N}_0$, the group $\mathcal{F}_{n}/\mathcal{F}_{n.5}$ has infinite rank.
\end{thm}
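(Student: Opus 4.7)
The plan is to construct, for each $n \geq 0$, an infinite family $\{K_i\}_{i \in \mathbb{N}} \subset \mathcal{F}_n$ and show that the $K_i$ are $\mathbb{Z}$-linearly independent in $\mathcal{F}_n/\mathcal{F}_{n.5}$ by pairing with carefully chosen higher-order von Neumann $\rho$-invariants. I would apply the recursive construction of Figure~\ref{fig:family} with seeds drawn from a family $\{J_0^{(i)}\}_{i \in \mathbb{N}}$ whose averaged Levine signatures $\rho_0(J_0^{(i)})$ are linearly independent over $\mathbb{Q}$; such seeds exist abundantly, for instance among suitable connected sums of torus knots whose signature functions are tuned to produce a $\mathbb{Q}$-linearly independent set of averages. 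Set $K_i := J_n^{(i)}$. Each $K_i$ lies in $\mathcal{F}_n$ by the standard fact that infecting a knot along a curve in the $k$-th derived subgroup of a slice knot complement raises the solvability filtration level by $k$, applied iteratively starting from the slice knot $R = 9_{46}$.

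To establish linear independence in $\mathcal{F}_n/\mathcal{F}_{n.5}$, assume for contradiction that a nontrivial integer combination $K = \#_{i=1}^{N} a_i K_i$ is $(n.5)$-solvable, witnessed by a $4$-manifold $W$. The heart of the argument is to construct, via the $(n)$-Lifting Lemma, a representation $\phi : \pi_1(M_K) \to \Gamma_n$ into a poly-torsion-free-abelian group that extends over $W$ and is simultaneously nontrivial on the deepest infection curve associated to each seed $J_0^{(i)}$. The representation is built layer by layer: at each depth one has a higher-order Alexander-type module of $W$ equipped with its Blanchfield form, and one chooses a self-annihilating submodule (a metabolizer) on which to restrict. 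Higher-order Blanchfield duality then guarantees that any such metabolizer must intersect the class of the infection curve in $\pi_1(M_K)$ nontrivially, which both permits the representation to extend to $W$ and forces it to be nonzero on the correct element. This algebraic extension-and-duality step is exactly what replaces the Cheeger-Gromov bound used in \cite{CT}.

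Once $\phi$ is constructed, the remainder is a short $\rho$-invariant calculation. The vanishing theorem (Theorem~\ref{vanish}) applied to $W$ forces $\rho(K,\phi) = 0$. On the other hand, additivity of $\rho$ under connected sum together with the iterated infection formula yields
\[
\rho(K,\phi) = c \sum_{i=1}^{N} a_i \, \rho_0(J_0^{(i)}) + (\text{intermediate terms}),
\]
where $c \neq 0$ is a combinatorial constant and the intermediate contributions arise from infection curves at depths strictly less than $n$. The layerwise choice of metabolizers arranges that $\phi$ vanishes on all intermediate infection curves, so those intermediate terms vanish \emph{exactly}, not merely to within a bound. Combined with the $\mathbb{Q}$-linear independence of $\{\rho_0(J_0^{(i)})\}$, this forces every $a_i = 0$, the desired contradiction. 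The principal obstacle I anticipate is arranging the metabolizer-pullback construction uniformly over \emph{all} possible $(n.5)$-solutions $W$, not merely a chosen one, since the family we produce must be independent in the quotient group; this uniformity is the genuine content of the $(n)$-Lifting Lemma and rests on a structural understanding of higher-order Alexander modules of arbitrary $(n.5)$-solutions. The smooth-category statement is automatic since all constructions are smooth and $\mathcal{F}_n^s \subseteq \mathcal{F}_n$.
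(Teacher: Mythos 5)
Your high-level plan—iterate the $9_{46}$ doubling on seeds $J_0^{(i)}$, then use higher-order Blanchfield duality and von Neumann $\rho$-invariants to detect linear independence modulo $\mathcal{F}_{n.5}$—is the paper's strategy, but there are genuine gaps in the mechanics. The crucial one is the claim that the layerwise metabolizer choices make the intermediate terms vanish \emph{exactly}. This is not achievable: when the iterated satellite formula is unwound, each level contributes a term $\rho(M_R,\phi_R)$, and this is not an infection-curve contribution you can kill by a metabolizer choice. It is a first-order signature of $R$ itself, and it can equal $\rho^1(R)\neq 0$. (Indeed, showing that at each intermediate level exactly one of $\{\alpha,\beta\}$ survives—which is what you would need to make $\phi_R$ extend over a ribbon disk exterior and force $\rho(M_R,\phi_R)=0$—\emph{itself} rests on knowing $\rho^1(R)\neq 0$, and is provable only for levels $i\leq n-2$; at the top level a $\rho^1(R)$ term can survive.) Consequently, mere $\mathbb{Q}$-linear independence of $\{\rho_0(J_0^{(i)})\}$ is not enough: the final identity has the shape $(\epsilon+C^1)\rho_0(J_0^{(1)})+\sum_{j>1}C^j\rho_0(J_0^{(j)})=C_0\,\rho^1(R)$ with $C_0\in\{0,1\}$, so you must additionally arrange that no rational combination of the $\rho_0(J_0^{(i)})$ is a rational multiple of $\rho^1(R)$. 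Relatedly, you cannot simply take $R=9_{46}$: whether $\rho^1(9_{46})\neq 0$ is unknown, so the ribbon knot must be modified (e.g.\ by a trefoil infection along one band) to guarantee $\rho^1(R)\neq 0$.

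There are two further points. First, your stated ``principal obstacle''—arranging the construction uniformly over all $(n.5)$-solutions—is a false concern: the argument is a proof by contradiction that begins with an arbitrary $(n.5)$-solution $W$ of the putative combination and derives a contradiction from $W$; there is nothing to make uniform. Second, your proposal is silent on the two technical devices that actually make the Blanchfield-duality step work in this setting: you need a weakened notion of solvability (``rational $(n)$-bordism'') that drops the requirement $H_1(\partial W)\to H_1(W)$ be an isomorphism, because the auxiliary 4-manifolds one builds by capping off infection cobordisms have disconnected boundary and the boundary inclusions are zero on $H_1$; and you need Blanchfield forms over the \emph{unlocalized} group ring $\mathbb{Q}\Lambda$ rather than a localization, since otherwise the argument fails for genus-one knots such as these. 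Without these, the ``lift the representation layer by layer'' step has no rigorous substance; in the paper's framework the coefficient system is not chosen freely but is the canonical quotient $\pi_1(W_i)\to\pi_1(W_i)/\pi_1(W_i)_r^{(k)}$, and the self-annihilating property of the kernel is a consequence (via the relative Blanchfield form) rather than a choice.
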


We note that the construction of our examples is done completely in the smooth category  so that we also establish the corresponding statements about the Cochran-Orr-Teichner filtration of the \emph{smooth} knot
 concordance group (In fact it can be shown that the natural map induces an \emph{isomorphisms} $\mathcal{F}_{n}^{smooth}/\mathcal{F}_{n.5}^{smooth}\cong \mathcal{F}_{n}/\mathcal{F}_{n.5}!$). Our technique also recovers the result of Cochran-Teichner, while eliminating two highly technical steps from their proof. In particular our proof does not rely on the analytical bound of Cheeger-Gromov.
 Moreover we use the knots $J_n$ (for suitably chosen $J_0$) to prove this. This family is simpler than the examples of Cochran and Teichner. In fact the families $J_n$ are distinct even up to concordance from
 the examples of Cochran and Teichner (this result will appear in another paper). We employ the Cheeger-Gromov von Neumann $\rho$-invariants and higher-order Alexander modules that were introduced in ~\cite{COT}. Our new technique is to expand
  upon previous results of Leidy concerning higher-order Blanchfield linking forms \emph{without localizing the coefficient system} ~\cite{Lei1}~\cite{Lei3}. This is used to show that certain elements of $\pi_1$ of a slice knot exterior cannot
 lie in the kernel of the map into any slice disk(s) exterior. Another new feature is the essential use of equivalence relations that are much weaker than concordance and $(n)$-solvability.

These techniques provide other new information about the order of knots in the concordance group. For example, consider the family of knots below where $J_{n-1}$, $n\geq 2$, is one of the the algebraically slice knots
 above.  For any such $K_{n}$, $K_{n}\# K_{n}$ is algebraically slice and has vanishing Casson-Gordon invariants. Therefore $K_{n}$ cannot be distinguished from an order $2$ knot by these invariants.
\begin{figure}[htbp]
\setlength{\unitlength}{1pt}
\begin{picture}(185,118)
\put(0,0){\includegraphics{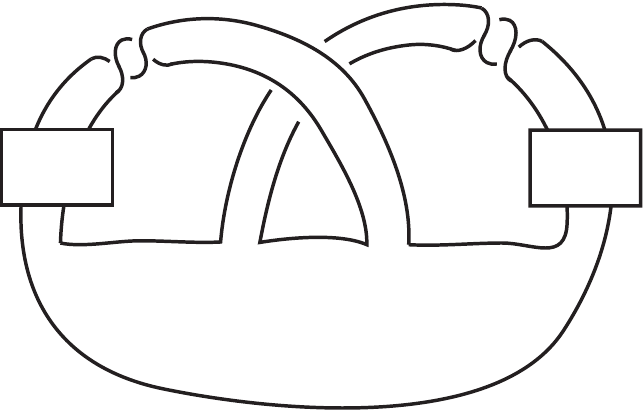}}
\put(-40,60){$K_{n}=$}
\put(5,67){$J_{n-1}$}
\put(158,67){$J_{n-1}$}
\end{picture}
\caption{Knots potentially of order 2 in the concordance group}\label{fig:torsionfigeight}
\end{figure}
However we show:
\newtheorem*{cor:torsion}{Corollary~\ref{cor:torsion}}
\begin{cor:torsion} For any $n$ there is a constant $D$ such that if $|\rho_{0}(J_0)|>D$  then $K_{n}$ is of infinite order in the smooth and topological concordance groups.
\end{cor:torsion}

The specific families of knots of Figure~\ref{fig:family} are important because of their simplicity and their history. However, they are merely particular instances of a more general `doubling' phenomenon to which our techniques may be applied. In order to state these results, we review a method we will use to construct examples. Let $R$ be a knot in $S^3$ and $\{\eta_1,\eta_2,\ldots,\eta_m\}$ be an oriented trivial link in $S^3$, that misses $R$, bounding a collection of disks that meet $R$ transversely as shown on the left-hand side of Figure~\ref{fig:infection}. Suppose $\{K_1,K_2,\ldots,K_m\}$ is an $m$-tuple of auxiliary knots. Let $R(\eta_1,\ldots,\eta_m\,K_1,\ldots,K_m)$ denote the result of the operation pictured in Figure~\ref{fig:infection}, that is, for each $\eta_j$, take the embedded disk in $S^3$ bounded by $\eta_j$; cut off $R$ along the disk; grab the cut strands, tie them into the knot $K_j$ (with no twisting) and reglue as shown in Figure~\ref{fig:infection}.

\begin{figure}[htbp]
\setlength{\unitlength}{1pt}
\begin{picture}(262,71)
\put(10,37){$\eta_1$} \put(120,37){$\eta_m$} \put(52,39){$\dots$}
\put(206,36){$\dots$} \put(183,37){$K_1$} \put(236,38){$K_m$}
\put(174,9){$R(\eta_1,\dots,\eta_m,K_1,\dots,K_m)$}
\put(29,7){$R$} \put(82,7){$R$}
\put(20,20){\includegraphics{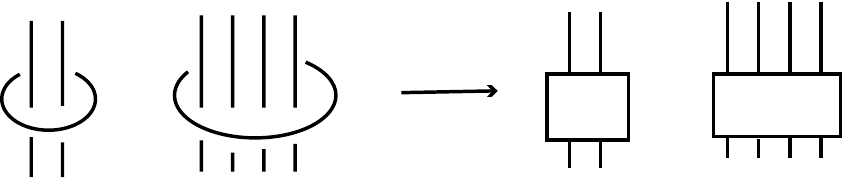}}
\end{picture}
\caption{$R(\eta_1,\dots,\eta_m,K_1,\dots,K_m)$:
Infection of $R$ by $K_j$ along $\eta_j$}\label{fig:infection}
\end{figure}
\noindent We will call this the \textbf{result of infection performed on the knot} $\mathbf{R}$ \textbf{using the infection knots} $\mathbf{K_j}$ \textbf{along the curves} $\mathbf{\eta_j}$. This construction can also be
described in the following way. For each $\eta_j$, remove a tubular neighborhood of $\eta_j$ in $S^3$ and glue in the exterior of a tubular neighborhood of $K_j$ along their common boundary, which is a
torus, in such a way that the longitude of $\eta_j$ is identified with the meridian of $K_j$ and the meridian of $\eta_j$ with the reverse of the longitude of $K_j$. The resulting space can be seen to be homeomorphic to $S^3$ and the image of $R$ is the new knot. In the case that $m=1$ this is the same as the classical satellite construction. In general it can be considered to be a ``generalized satellite construction'', widely utilized in the study of knot concordance. In the case that $m=1$ and $lk(\eta,R)=0$ it is precisely the same as forming a satellite of $J$ with winding number zero. This yields an operator
$$
R_{\eta}:\mathcal{C}\to \mathcal{C}.
$$
that has been studied (e.g. ~\cite{LiM}). For general $m$ with $lk(\eta_j,R)=0$, it can be considered as a \textbf{generalized doubling operator}, $R_{\eta_j}$, parameterized by $(R,\{\eta_{j}\})$
$$
R_{\eta_j}:~\mathcal{C}\times\dots\times\mathcal{C}\to \mathcal{C}.
$$
If, for simplicity, we assume that all ``input knots'' are identical then such an operator is a function
$$
R_{\eta_j}:~\mathcal{C}\to \mathcal{C}.
$$

\noindent A primary example is the ``$R$-doubling'' operation of going from the left-hand side of Figure~\ref{fig:ribbonCG} to the right-hand side.
Here $R$ is the $9_{46}$ knot and $\{\eta_1,\eta_2\}=\{\alpha,\beta\}$ are as shown on the left-hand side of Figure~\ref{fig:Rdoubling}.
 The image of a knot $K$ under the operator $R_{\alpha,\beta}$ is denoted by $R(K)$ and is shown on the right-hand side of Figure~\ref{fig:Rdoubling}.
  Note that our previously defined knot $J_1$ is the same as $R(J_0)$ and that $K_{n}$ of Figure~\ref{fig:torsionfigeight} is $\bar{R}(J_{n-1})$ where $\bar{R}$ is the figure-eight knot.

\begin{figure}[htbp]
\setlength{\unitlength}{1pt}
\begin{picture}(357,151)
\put(0,0){\includegraphics{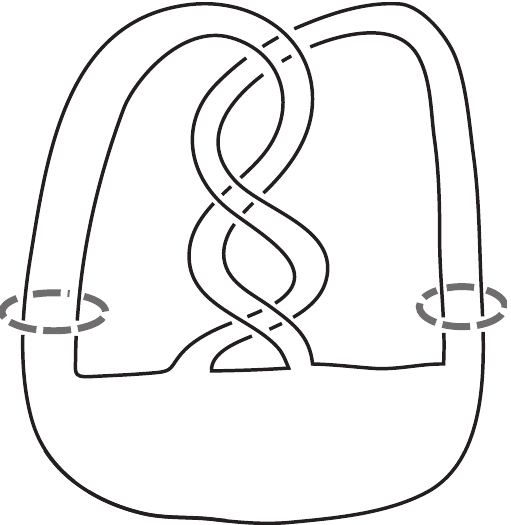}}
\put(214,0){\includegraphics{family_scaled.pdf}}
\put(-10,60){$\alpha$}
\put(148,60){$\beta$}
\put(226,93){$K$}
\put(337,93){$K$}
\put(168,93){$R(K)\equiv$}
\put(-40,93){$R_{\alpha,\beta}~\equiv$}
\end{picture}
\caption{$R$-doubling}\label{fig:Rdoubling}
\end{figure}

Most of the results of this paper concern to what extent these functions are injective. Because of the condition on ``winding numbers'', $lk(\eta_j,R)=0$,
if $R$ is a slice knot, the images of such operators $R$ contain only knots for which the classical invariants vanish. Thus iterations of these operators, \textbf{iterated generalized doubling}, produce increasingly subtle knots. This claim is quantified by the following.

\newtheorem*{thm:nsolvable}{Theorem~\ref{thm:nsolvable}}
\begin{thm:nsolvable}[see also {\cite[proof of Proposition 3.1]{COT2}}] If $R_i$, $1\leq i\leq n$, are slice knots and
 $\eta_{ij}\in \pi_1(S^3-R_i)^{(1)}$ then
$$
R_n\circ\dots\circ R_2\circ R_1(\mathcal{F}_{0})\subset \mathcal{F}_{n},
$$
where we  abbreviate $(R_i)_{\eta_{ij}}$ by $R_i$.
\end{thm:nsolvable}

\noindent For example the knot $J_n$ is the result of $n$ iterations of the $R_{\alpha,\beta}$ operator shown above
$$
\mathcal{C}\overset{R}\longrightarrow \mathcal{C}\to\dots\to \mathcal{C}\overset{R}\rightarrow\mathcal{C}
$$
applied to some initial knot $J_0=K$. More generally let us define an \textbf{n-times iterated generalized doubling} to be such a composition of operators using possibly different slice knots $R_i$, and different curves $\eta_{i1},\dots,\eta_{im_i}$.

Then our main proof establishes:

\newtheorem*{thm:main3}{Theorem~\ref{thm:main3}}
\begin{thm:main3}  Suppose $R_i$, $1\leq i\leq n$, is a set of (not necessarily distinct) slice knots. Suppose that, for each fixed $i$, $\{\eta_{i1},...,\eta_{im_i}\}$ is a trivial link of circles in $\pi_1(S^3-R_i)^{(1)}$ such that for some $ij$ and $ik$ (possibly equal) $\mathcal{B}\ell_0^i(\eta_{ij},\eta_{ik})\neq 0$, where $\mathcal{B}\ell_0^i$ is the classical Blanchfield form of $R_i$. Then there exists a constant $C$ such that if $K$ is any knot with Arf$(K)=0$ and $|\rho_0(K)|>C$, the result, $R_{n}\circ\dots\circ R_{1}(K)$, of n-times iterated generalized doubling, is of infinite order in the smooth and topological concordance groups, and moreover represents an element of infinite order in $\mathcal{F}_{n}/\mathcal{F}_{n.5}$.
\end{thm:main3}

Note that any set $\{\eta_{i1},\dots,\eta_{im_i}\}$ that generates a submodule of the Alexander module of $R_i$ of more than half rank necessarily satisfies the condition of Theorem~\ref{thm:main3}, because of the non-singularity of the Blanchfield form.

\section{Higher-Order Signatures and How to Calculate Them}\label{signatures}

In this section we review the von Neumann $\rho$-invariants and explain to what extent they are concordance invariants. We also show how to calculate them for knots or links that are obtained from the infections defined in Section~\ref{sec:Introduction}.

The use of variations of Hirzebruch-Atiyah-Singer signature defects associated to covering spaces is a theme common to most of the work in the field of knot and link concordance since the 1970's. In particular, Casson and Gordon initiated their use in cyclic covers ~\cite{CG1}~\cite{CG2}; Farber, Levine and Letsche initiated the use of signature defects associated to general (finite) unitary representations ~\cite{L6}~\cite{Let}; and Cochran-Orr-Teichner initiated the use of signatures associated to the left regular representations ~\cite{COT}. See ~\cite{Fr2} for a beautiful comparison of these approaches in the metabelian case.

Given a compact, oriented 3-manifold $M$, a discrete group $\G$, and a representation $\phi : \pi_1(M)
\to \G$, the \textbf{von Neumann
$\mathbf{\rho}$-invariant} was defined by Cheeger and Gromov by choosing a Riemannian metric and using $\eta$-invariants associated to $M$ and its covering space induced by $\phi$. It can be thought of as an oriented homeomorphism invariant associated to an arbitrary regular covering space of $M$ ~\cite{ChGr1}. If $(M,\phi) = \partial
(W,\psi)$ for some compact, oriented 4-manifold $W$ and $\psi : \pi_1(W) \to \G$, then it is known that $\rho(M,\phi) =
\s^{(2)}_\G(W,\psi) - \s(W)$ where $\s^{(2)}_\G(W,\psi)$ is the
\textbf{$\mathbf{L^{(2)}}$-signature} (von Neumann signature) of the intersection form defined on
$H_2(W;\mathbb{Z}\G)$ twisted by $\psi$ and $\sigma(W)$ is the ordinary
signature of $W$ ~\cite{LS}. In the case that $\G$ is a poly-(torsion-free-abelian) group (abbreviated \textbf{PTFA group} throughout), it follows that $\mathbb{Z}\G$ is a right Ore domain that embeds into its (skew) quotient field of fractions $\mathcal{K}\G$ ~\cite[pp.591-592, ~Lemma 3.6ii p.611]{P}. In this case $\s^{(2)}_\G$  is a function of the Witt class of the equivariant intersection form on $H_2(W;\mathcal{K}\G)$ ~\cite[Section 5]{COT}. In the special case that this form is non-singular (such as $\beta_1(M)=1$), it can be thought of as a homomorphism from
$L^0(\mathcal{K}\G)$ to $\mathbb{R}$.

All of the coefficient systems $\G$ in this paper will be of the form $\pi/\pi^{(n)}_r$ where $\pi$ is the fundamental group of a space (usually a $4$-manifold) and $\pi^{(n)}_r$ is the $n^{th}$-term of the \textbf{rational derived series}. The latter was first considered systematically by Harvey. It is defined by
$$
\pi^{(0)}_r\equiv \pi,~~~ \pi^{(n+1)}_r\equiv \{x\in \pi^{(n)}_r|\exists k\neq 0, x^k\in [\pi_r^{(n)},\pi_r^{(n)}]\}.
$$
Note that $n^{th}$-term of the usual derived series $\pi^{(n)}$ is contained in the $n^{th}$-term of the rational derived series. For free groups and knot groups, they coincide. It was shown in ~\cite[Section 3]{Ha1} that $\pi/\pi^{(n)}_r$ is a PTFA group.

The utility of the von Neumann signatures lies in the fact that they obstruct knots from being slice knots. It was shown in ~\cite[Theorem 4.2]{COT} that, under certain situations, higher-order von Neumann signatures vanish for slice knots, generalizing the classical result of Murasugi and the results of Casson-Gordon. Here we state their result for slice knots.

First,

\begin{thm}[Cochran-Orr-Teichner~{\cite[Theorem 4.2]{COT}}] \label{thm:oldsliceobstr}If a knot $K$ is topologically slice in a rational homology $4$-ball, $M_K$ is the zero surgery on $K$ and
$\phi:\pi_1(M_K)\to \G$ is a PTFA coefficient system that extends to the fundamental group of the exterior of the slicing disk, then $\rho(M_K,\phi)=0$.
\end{thm}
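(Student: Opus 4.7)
The plan is to realize $M_K$ as the boundary of the slice-disk exterior and then express $\rho(M_K,\phi)$ as a signature defect of that exterior, reducing the problem to showing that two ordinary signatures vanish.

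Let $V$ be the topological rational homology $4$-ball containing a locally flat $2$-disk $D$ with $\partial D = K \subset S^3 = \partial V$, and let $W = V \setminus \nu(D)$ denote its exterior. Then $\partial W = M_K$, and by hypothesis the representation $\phi$ extends to $\psi\colon \pi_1(W) \to \G$. Since $\G$ is PTFA, $\Z\G$ embeds in its skew field of fractions $\mathcal{K}\G$, so the signature-defect formula
$$\rho(M_K,\phi) = \sigma^{(2)}_\G(W,\psi) - \sigma(W)$$
recalled in the excerpt applies to $(W,\psi)$. It therefore suffices to show that both terms on the right vanish.

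First I would dispatch the ordinary signature. A Mayer-Vietoris argument for the decomposition $V = W \cup \nu(D)$ with $W \cap \nu(D) \simeq S^1 \times D^2$, combined with the hypothesis $H_*(V;\Q) \cong H_*(\mathrm{pt};\Q)$, yields $H_2(W;\Q) = 0$. Consequently the ordinary rational intersection form on $W$ is trivial and $\sigma(W) = 0$.

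Next I would produce a Lagrangian for the $\mathcal{K}\G$-valued equivariant intersection form in order to conclude $\sigma^{(2)}_\G(W,\psi) = 0$. Set
$$P = \ker\bigl(H_2(W;\mathcal{K}\G) \to H_2(W,M_K;\mathcal{K}\G)\bigr).$$
The submodule $P$ is isotropic with respect to the intersection form: this is the standard fact that absolute classes representable by cycles pushed off into the boundary pair trivially among themselves. The genuine content is to verify that $\dim_{\mathcal{K}\G} P = \tfrac{1}{2}\dim_{\mathcal{K}\G} H_2(W;\mathcal{K}\G)$. For this I would feed the rational-ball hypothesis on $V$ into Poincar\'e-Lefschetz duality over $\mathcal{K}\G$ and the long exact sequence of the pair $(W,M_K)$. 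Since $\mathcal{K}\G$ is a skew field, ranks can be tracked by linear algebra, so one only needs to pin down the ranks of $H_i(W;\mathcal{K}\G)$ and $H_i(M_K;\mathcal{K}\G)$ for $i \leq 1$; the manifold $M_K$ (with $\beta_1 = 1$) and the rational-ball exterior $W$ exhibit the ``half-lives, half-dies'' behavior that forces $\dim P$ to equal half of $\dim H_2$.

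The main obstacle will be this rank computation, namely confirming that $P$ is exactly half-dimensional given only that $V$ is a \emph{rational} (not integral) homology $4$-ball, so $H_*(V;\Z)$ may contain torsion. The observation that rescues the argument is that tensoring with the skew field $\mathcal{K}\G$ is exact and annihilates all $\Z$-torsion, so the rational-ball hypothesis on $V$ still suffices to control $\mathcal{K}\G$-ranks in $W$. Once this is in place, $P$ is a Lagrangian, the Witt class of the equivariant intersection form is trivial, $\sigma^{(2)}_\G(W,\psi) = 0$, and combined with $\sigma(W) = 0$ this yields $\rho(M_K,\phi) = 0$.
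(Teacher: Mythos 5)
The theorem you are proving is stated in the paper as a citation of \cite[Theorem 4.2]{COT}; the present paper does not reprove it, so there is no in-paper argument to compare against, only the machinery around Theorem~\ref{thm:sigvanishes} and Lemma~\ref{lem:exact} that generalizes it. Your overall framework is sound: realize $\rho(M_K,\phi)$ as $\sigma^{(2)}_\G(W,\psi)-\sigma(W)$ on the slice-disk exterior $W$, and kill both terms. The Mayer--Vietoris argument giving $H_2(W;\Q)=0$ (hence $\sigma(W)=0$) is correct.

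The argument for $\sigma^{(2)}_\G(W,\psi)=0$, however, has a real gap. Your submodule $P=\ker\bigl(H_2(W;\mathcal{K}\G)\to H_2(W,M_K;\mathcal{K}\G)\bigr)$ is not merely an isotropic subspace of the intersection form $\lambda$ --- it is the entire \emph{radical} of $\lambda$, since $\lambda(x,y)=\langle PD^{-1}(x),j_*(y)\rangle$ vanishes for all $x$ exactly when $j_*(y)=0$. The Witt class and signature of a possibly-degenerate form are by definition those of the induced nondegenerate form on $H_2(W;\mathcal{K}\G)/P$, so showing $\dim P=\frac12\dim H_2$ proves nothing: the quotient is then a half-dimensional nondegenerate form whose signature is unconstrained (think of $\mathrm{diag}(1,1,0,0)$, whose radical is half-dimensional yet whose signature is~$2$). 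A Lagrangian in the Witt-theoretic sense must be an isotropic subspace of the nondegenerate part, not a piece of the radical. What the half-lives/half-dies bookkeeping you propose actually yields, once carried out, is something stronger and cleaner: if $\phi$ is trivial on $\pi_1(M_K)$ then $\rho=0$ by Proposition~\ref{prop:rho invariants}(3); if $\phi$ is nontrivial then (using $\beta_1(M_K)=1$ and \cite[Props.~2.9, 2.11]{COT} together with duality) one has $H_*(M_K;\mathcal{K}\G)=0$, hence $P=0$ so the form is already nonsingular, and then $b_i^\G(W)=0$ for $i\neq 2$, so the Euler characteristic $\chi(W)=0$ forces $H_2(W;\mathcal{K}\G)=0$ and the $L^{(2)}$-signature vanishes because the form is on the zero module. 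So your rank computation is pointing at the right facts, but the inference ``radical is half-dimensional, hence Witt class is trivial'' is not valid, and should be replaced by ``the twisted second homology vanishes outright.''
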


Moreover, Cochran-Orr-Teichner showed that this same result holds if $\G^{(n+1)}=\{1\}$ and $K\in \mathcal{F}_{(n.5)}$. This filtration will be defined in Section \ref{sec:bordism} where we also greatly generalize their signature theorem.

Some other useful properties of von Neumann
$\rho$-invariants are given below. One can find
detailed explanations of most of these in \cite[Section 5]{COT}.

%The last property, that for a fixed $3$-manifold, the set $\{\rho(M,\phi)\}$ is bounded above and below, is an analytical result of Cheeger and Gromov that we use in some of our results here, but is %not used in the proof of the main theorem.

\begin{prop}
\label{prop:rho invariants}Let $M$ be a closed, oriented $3$-manifold and $\phi : \pi_1(M) \to \G$ be a PTFA coefficient system.
\begin{itemize}
\item [(1)] If $(M,\phi) = \partial (W,\psi)$ for some compact
oriented 4-manifold $W$ such that the equivariant intersection form on $H_2(W;\mathcal{K}\G)/j_*(H_2(\partial W;\mathcal{K}\G))$ admits a half-rank summand on
which the form vanishes, then
$\s^{(2)}_\G(W,\psi)=0$ (see ~\cite[Lemma 3.1 and Remark 3.2]{Ha2} for a proper explanation of this for manifolds with $\beta_1>1$). Thus if  $\s(W) = 0$ then $\rho(M,\phi) = 0$.

\item [(2)] If $\phi$ factors through $\phi' : \pi_1(M) \to \G'$ where
$\G'$ is a subgroup of $\G$, then $\rho(M,\phi') = \rho(M,\phi)$.

\item [(3)] If $\phi$ is trivial (the zero map), then $\rho(M,\phi) = 0$.

\item [(4)] If $M=M_K$ is zero surgery on a knot $K$ and $\phi:\pi_1(M)\to \mathbb{Z}$ is the abelianization, then $\rho(M,\phi)$ is equal to the integral over the circle of the Levine (classical) signature function of $K$, normalized so that the length of the circle is $1$ ~\cite[Prop. 5.1]{COT2}. \textbf{This real number will be denoted $\rho_0(K)$}.

%\item [(5)] (Cheeger-Gromov ~\cite{ChGr1}) Given $M$, there is a positive constant $C_M$, the \textbf{Cheeger-Gromov constant of M}, such that for every $\phi$
%$$
%|\rho(M,\phi)|<C_M.
%$$
\end{itemize}
\end{prop}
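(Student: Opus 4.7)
The plan is to treat each of the four items in turn, leaning in each case on the basic formula $\rho(M,\phi)=\s^{(2)}_\G(W,\psi)-\s(W)$ when $(M,\phi)=\p(W,\psi)$, and on the standard functorial properties of the $L^{(2)}$-signature for skew fields of fractions of PTFA group rings.

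\textbf{Item (1).} I would first observe that, since $\mathcal{K}\G$ is a (skew) field, $H_2(W;\mathcal{K}\G)$ is a finitely generated free $\mathcal{K}\G$-module and the equivariant intersection form descends to a Hermitian form on the quotient $V=H_2(W;\mathcal{K}\G)/j_*H_2(\p W;\mathcal{K}\G)$, where it is non-singular (this is where I invoke Harvey's [Ha2, Lemma 3.1, Remark 3.2] to handle the case $\beta_1(\p W)>1$). A non-singular Hermitian form over $\mathcal{K}\G$ that admits a half-rank summand on which it vanishes is Witt-trivial, hence its $L^{(2)}$-signature vanishes; this is the standard Witt-class interpretation of $\s^{(2)}_\G$ from \cite[Section 5]{COT}. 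Then $\s^{(2)}_\G(W,\psi)=0$, and combining with the defect formula gives $\rho(M,\phi)=-\s(W)$, which is $0$ under the additional hypothesis $\s(W)=0$.

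\textbf{Items (2) and (3).} For (2), choose any $(W,\psi')$ bounding $(M,\phi')$; compose with the inclusion $\G'\hra \G$ to get $\psi:\pi_1(W)\to \G$ bounding $(M,\phi)$. Since the inclusion of a subgroup into a PTFA group induces a flat ring extension $\mathcal{K}\G'\hra\mathcal{K}\G$, we have $H_2(W;\mathcal{K}\G)\cong H_2(W;\mathcal{K}\G')\ox_{\mathcal{K}\G'}\mathcal{K}\G$ and the equivariant intersection form is obtained by extension of scalars; induction of Hermitian forms preserves the $L^{(2)}$-signature, so $\s^{(2)}_\G(W,\psi)=\s^{(2)}_{\G'}(W,\psi')$, and subtracting $\s(W)$ from both sides yields $\rho(M,\phi)=\rho(M,\phi')$. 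For (3), if $\phi$ is trivial, any $4$-manifold $W$ with $\p W=M$ bounds $(M,\phi)$ using the trivial representation on $\pi_1(W)$; the twisted form on $H_2(W;\mathbb{Z}\G)\cong H_2(W;\mathbb{Z})\ox\mathbb{Z}\G$ is, after tensoring up to $\mathcal{K}\G$, just the ordinary intersection form with coefficients extended trivially, so $\s^{(2)}_\G(W,\psi)=\s(W)$ and hence $\rho(M,\phi)=0$.

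\textbf{Item (4).} This is the standard identification of the abelian $\rho$-invariant of zero-surgery with the integral of Levine's signature function. I would simply cite \cite[Prop. 5.1]{COT2} as indicated in the statement; the proof there constructs a $4$-manifold bounding $M_K$ equipped with the abelianization, computes the $L^{(2)}$-signature of the resulting form via a direct integration of the characters on $S^1$, and recognizes the resulting integrand as the classical Levine signature function.

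The main obstacle is really only in (1), where one must justify the passage from the possibly singular form on $H_2(W;\mathcal{K}\G)$ to a non-singular form on the quotient $V$ and verify that the hypothesized half-rank vanishing summand survives this passage; once that bookkeeping is done, the rest is a direct application of the Witt-class formalism and the bordism formula for $\rho$. Items (2)--(4) are essentially naturality statements that follow without further effort from the definitions.
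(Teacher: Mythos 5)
Your treatment of items (1) and (4) is fine and mirrors the references the paper points to (\cite{Ha2} for the passage to a nonsingular form on the quotient and the Witt--triviality step in (1), and \cite[Prop.~5.1]{COT2} for (4)). The genuine gap is in items (2) and (3): you begin both by ``choosing $(W,\psi')$ bounding $(M,\phi')$,'' but such a bounding $4$-manifold need not exist. The pair $(M,\phi)$ determines a class in $\Omega_3(B\Gamma)$, and for a PTFA group $\Gamma$ this group is not generally zero (already $H_3(\mathbb{Z}^3;\mathbb{Z})\neq 0$), so the signature-defect formula is not available as a definition of $\rho(M,\phi)$. The proposition concerns the Cheeger--Gromov $\rho$-invariant, which is defined analytically via $L^{(2)}$-$\eta$-invariants for \emph{every} $(M,\phi)$, bounding or not, and properties (2) and (3) must be established at that level.

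For (3), the $\Gamma$-cover of $M$ induced by the trivial representation is $M\times\Gamma$ (a disjoint union of copies of $M$ indexed by $\Gamma$), and the $\eta^{(2)}$-invariant computed with the von Neumann trace on $\mathcal{N}\Gamma$ coincides with the ordinary $\eta$-invariant of $M$, so $\rho(M,\phi)=0$. For (2), the $\Gamma$-cover induced by $\phi$ is a disjoint union of copies of the $\Gamma'$-cover indexed by the cosets of $\Gamma'$ in $\Gamma$, and the canonical trace-preserving inclusion $\mathcal{N}\Gamma'\subset\mathcal{N}\Gamma$ shows the two $\eta^{(2)}$-invariants agree. Your induction-of-Hermitian-forms argument is the correct statement at the level of $L^{(2)}$-signatures of bounding $4$-manifolds (the $L^{(2)}$-induction principle from \cite[Section~5]{COT}) and it does suffice in all of this paper's applications, where an explicit $W$ is always in hand; but by itself it does not prove (2) and (3) for an arbitrary closed oriented $3$-manifold, which is the statement being made.
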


We will establish an elementary lemma that reveals the additivity of the $\rho$-invariant under infection. It is slightly more general than \cite[Proposition 3.2]{COT2}. The use of a Mayer-Vietoris sequence to analyze the effect of a satellite construction on signature defects is common to essentially all of the previous work in this field (see for example ~\cite{Lith1}).

Suppose $L=R(\eta_i,K_i)$ is obtained by infection as described in Section~\ref{sec:Introduction}. Let the zero surgeries on $R$, $L$, and $K_i$ be denoted $M_R$, $M_L$, $M_i$ respectively. Suppose $\phi:\pi_1(M_L)\to \G$ is a map to an arbitrary PTFA group $\G$ such that, for each $i$, $\ell_i$, the longitude of $K_i$, lies in the kernel of $\phi$. Since $S^3-K_i$ is a submanifold of $M_L$, $\phi$ induces  a map on $\pi_1(S^3-K_i)$. Since $l_i$ lies in the kernel of $\phi$ this map extends uniquely to a map that we call $\phi_i$  on $\pi_1(M_i)$. Similarly, $\phi$ induces a map on $\pi_1(M_R-\coprod \eta_i)$. Since $M_R$ is obtained from $(M_R-\coprod \eta_i)$ by adding $m$ $2$-cells along the meridians of the $\eta_i$, $\mu_{\eta_i}$ and $m$ $3-$cells, and since $\mu_{\eta_i}=l_i^{-1}$ and $\phi_i(l_i)=1$, $\phi$ extends uniquely to $\phi_R$. Thus $\phi$ induces unique maps $\phi_i$ and $\phi_R$ on $\pi_1(M_i)$ and $\pi_1(M_R)$ (characterized by the fact that they agree with $\phi$ on $\pi_1(S^3-K_i)$ and $\pi_1(M_R-\coprod \eta_i)$ respectively).

There is a very important case when the hypothesis above that $\phi(\ell_i)=1$ is always satisfied. Namely suppose $\G^{(n+1)}=1$ and $\eta_i\in \pi_1(M_R)^{(n)}$.  Since a longitudinal push-off of $\eta_i$, called $\ell_{\eta_i}$ or $\eta_i^+$, is isotopic to $\eta_i$ in the solid torus $\eta_i\times D^2\subset M_R$, $\ell_{\eta_i}\in \pi_1(M_R)^{(n)}$ as well. By ~\cite[Theorem 8.1]{C} or ~\cite{Lei3} it follows that $\ell_{\eta_i}\in \pi_1(M_L)^{(n)}$. Since $\mu_i$, the meridian of $K_i$, is identified to $\ell_{\eta_i}$, $\mu_i \in \pi_1(M_L)^{(n)}$ so $\phi(\mu_i)\in \G^{(n)}$  for each $i$.  Thus $\phi_i(\pi_1(S^3- K_i)^{(1)})\subset\G^{(n+1)}=1$ and in particular the longitude of each $K_i$ lies in the kernel of $\phi$.

\begin{lem}\label{lem:additivity} In the notation of the two previous paragraphs (assuming $\phi(\ell_i)=1$ for all $i$),
$$
\rho(M_L,\phi) - \rho(M_R,\phi_R) = \sum^m_{i=1}\rho(M_i,\phi_i).
$$
Moreover if $\pi_1(S^3-K_i)^{(1)}\subset$ kernel($\phi_i$) then either $\rho(M_i,\phi_i)=\rho_0(K_i)$, or $\rho(M_i,\phi_i)=0$, according as $\phi_R(\eta_i)\neq 1$ or $\phi_R(\eta_i)= 1$. Specifically, if $\G^{(n+1)}=1$ and $\eta_i\in \pi_1(M_R)^{(n)}$ then this is the case.
\end{lem}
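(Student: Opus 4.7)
The plan is to construct a $4$-dimensional cobordism $C$ between $M_L$ and $M_R\sqcup\coprod_i M_i$ (with appropriate orientation signs) to which the three coefficient systems $\phi$, $\phi_R$, and $\phi_i$ extend simultaneously, and then apply the $L^{(2)}$-signature defect formula $\rho(\partial C,\Phi)=\sigma^{(2)}_\Gamma(C,\Phi)-\sigma(C)$ together with the vanishing of both signatures of $C$.

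Explicitly, starting with $M_R\times[0,1]$, for each $i$ delete the interior of $\nu\eta_i\times(1/2,1]$ from the top half and glue in $E_{K_i}\times[1/2,1]$ along the product torus $T_i\times[1/2,1]$ via the infection identification. The top face becomes $M_L$, the bottom remains $M_R$, and the interior slice at $t=1/2$ is $\coprod_i(\nu\eta_i\cup_{T_i}E_{K_i})$; because $\mu_{\eta_i}=\lambda_{K_i}^{-1}$, the meridian disk of $\nu\eta_i$ supplies precisely the $0$-framed $2$-handle of the zero-surgery on $K_i$, so this slice equals $\coprod_i M_i$. Equivalently, $C=(M_R\times[0,1/2])\cup_{E_R\times\{1/2\}}(M_L\times[1/2,1])$ where $E_R:=M_R-\coprod\nu\eta_i=M_L-\coprod E_{K_i}$. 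By van Kampen, $\pi_1(C)=\pi_1(M_R)*_{\pi_1(E_R)}\pi_1(M_L)$, and since $\phi$ and $\phi_R$ agree on $\pi_1(E_R)$ by construction, they amalgamate to $\Phi:\pi_1(C)\to\Gamma$. Its restriction to each $\pi_1(M_i)$ is precisely $\phi_i$: the hypothesis $\phi(\ell_i)=1$ provides the required factorization through $\pi_1(E_{K_i})\twoheadrightarrow\pi_1(M_i)$.

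The next step is to show $\sigma(C)=0$ and $\sigma^{(2)}_\Gamma(C,\Phi)=0$, generalizing \cite[Prop.~3.2]{COT2}. Since $C$ deformation retracts to the $3$-dimensional pushout $M_R\cup_{E_R}M_L$, a Mayer--Vietoris calculation in any coefficient ring $R\in\{\mathbb{Q},\mathcal{K}\Gamma\}$ shows that $j_*:H_2(\partial C;R)\to H_2(C;R)$ is surjective: the cokernel summand $\operatorname{coker}(H_2(E_R;R)\to H_2(M_R;R)\oplus H_2(M_L;R))$ is hit by $H_2(M_R)\oplus H_2(M_L)\subset H_2(\partial C)$, and the kernel summand $\ker(H_1(E_R;R)\to H_1(M_R;R)\oplus H_1(M_L;R))$ is generated by the classes $[\mu_{\eta_i}]=[\lambda_{K_i}^{-1}]$, each of which is the Mayer--Vietoris image of the capped-off Seifert surface for $K_i$ living inside $H_2(M_i;R)\subset H_2(\partial C;R)$. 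Proposition~\ref{prop:rho invariants}(1) then forces both signatures to vanish, and the defect formula gives $\rho(M_L,\phi)-\rho(M_R,\phi_R)-\sum_i\rho(M_i,\phi_i)=0$.

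For the second assertion, the hypothesis $\pi_1(S^3-K_i)^{(1)}\subset\ker\phi_i$ forces $\phi_i$ to factor through the abelianization $\pi_1(M_i)/\pi_1(M_i)^{(1)}=\mathbb{Z}\langle\mu_{K_i}\rangle$ as a homomorphism $\tilde\phi_i:\mathbb{Z}\to\Gamma$ with $\tilde\phi_i(\mu_{K_i})=\phi(\ell_{\eta_i})$. Because $\ell_{\eta_i}$ is freely homotopic to $\eta_i$ inside the solid torus $\nu\eta_i\subset M_R$, the element $\phi(\ell_{\eta_i})=\phi_R(\ell_{\eta_i})$ is conjugate in $\Gamma$ to $\phi_R(\eta_i)$, so the two vanish together. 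If $\phi_R(\eta_i)=1$, $\tilde\phi_i$ is trivial and Proposition~\ref{prop:rho invariants}(3) yields $\rho(M_i,\phi_i)=0$; otherwise $\tilde\phi_i(\mu_{K_i})$ has infinite order in the torsion-free PTFA group $\Gamma$, making $\tilde\phi_i$ an injection of $\mathbb{Z}$ onto an infinite cyclic subgroup of $\Gamma$, and Proposition~\ref{prop:rho invariants}(2) and (4) combine to give $\rho(M_i,\phi_i)=\rho_0(K_i)$. The final sentence is immediate from the discussion preceding the statement: $\Gamma^{(n+1)}=1$ and $\eta_i\in\pi_1(M_R)^{(n)}$ together force $\phi_i(\pi_1(S^3-K_i)^{(1)})\subset\Gamma^{(n+1)}=1$.

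The principal technical step is the vanishing $\sigma^{(2)}_\Gamma(C,\Phi)=0$: the $\mathbb{Q}$-coefficient analog $\sigma(C)=0$ follows readily since $C$ has the homotopy type of a $3$-complex, but the twisted version requires verifying that the geometric generators for the kernel summand -- the capped-off Seifert surfaces inside the $M_i$ -- remain generators in the $\mathcal{K}\Gamma$-coefficient Mayer--Vietoris sequence. This, in turn, is exactly what $\phi(\ell_i)=1$ ensures, because it makes the coefficient system restrict compatibly to the capping disks.
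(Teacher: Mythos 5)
Your construction of the cobordism $C$ is essentially the same $4$-manifold as the paper's $E$ (the paper builds it as $M_R\times[0,1]\amalg\coprod_i(-M_i\times[0,1])$ glued along the solid tori $\eta_i\times D^2\cong\nu K_i$, which rounds to your description), and the overall strategy—additivity of the $\rho$-invariant via a cobordism whose signature defect vanishes, then a case analysis to identify $\rho(M_i,\phi_i)$—is the same. The second half of your argument (factoring $\phi_i$ through $\mathbb{Z}$ and invoking parts (2)–(4) of Proposition~\ref{prop:rho invariants}) matches the paper exactly.

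Where you diverge, and where there is a real gap, is in the Mayer–Vietoris computation used to show $\sigma^{(2)}_\Gamma(C,\Phi)=\sigma(C)=0$. First, your remark that ``$\sigma(C)=0$ follows readily since $C$ has the homotopy type of a $3$-complex'' is false as stated: $\mathbb{CP}^2$ minus a ball retracts to $S^2$ yet has signature $1$. What must be shown (for both coefficient systems) is that $j_*:H_2(\partial C)\to H_2(C)$ is surjective, which is exactly the content of the paper's Lemma~\ref{lem:mickeysig}. Second, your MV decomposition $C\simeq M_R\cup_{E_R}M_L$, with overlap the whole knot complement $E_R=M_R-\coprod\nu\eta_i$, puts the burden on computing $\ker\bigl(H_1(E_R;\mathcal{K}\Gamma)\to H_1(M_R;\mathcal{K}\Gamma)\oplus H_1(M_L;\mathcal{K}\Gamma)\bigr)$ and realizing it by classes in $H_2(\partial C;\mathcal{K}\Gamma)$. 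Your assertion that this kernel is ``generated by $[\mu_{\eta_i}]$'' and that each such class is the MV image of a capped-off Seifert surface in $H_2(M_i;\mathcal{K}\Gamma)$ is not justified, and in fact the Seifert surface does not lift to the $\Gamma$-cover of $M_i$ unless $\phi_i$ kills $\pi_1(M_i)^{(1)}$—a hypothesis appearing only in the \emph{second} assertion of the lemma, not in the first. The paper avoids all of this by using the decomposition along $\coprod\eta_i\times D^2$ (homotopy equivalent to a disjoint union of circles): the whole argument reduces to showing $H_1(\eta_i\times D^2;\mathbb{Z}\Gamma)\to H_1(M_i;\mathbb{Z}\Gamma)$ is injective, which is a short case analysis on $\phi(\eta_i)=1$ vs.\ $\phi(\eta_i)\neq 1$ (in the latter case $H_1(\eta_i\times D^2;\mathbb{Z}\Gamma)=0$ because left-multiplication by $\phi(\eta_i)-1$ is injective in a domain). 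You should either switch to that decomposition or carry out the $\phi(\eta_i)=1$ / $\phi(\eta_i)\neq 1$ dichotomy explicitly in your version; as written the ``principal technical step'' you flag at the end is genuinely unfinished, and the sentence claiming $\phi(\ell_i)=1$ resolves it does not.
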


\begin{proof} Let $E$ be the $4$-manifold obtained from $M_R\times [0,1] \coprod -M_i\times [0,1]$ by identifying, for each $i$, the copy of $\eta_i\times D^2$ in $M_R\times \{1\}$ with the tubular neighborhood of $K_i$ in $M_i\times \{0\}$ as in Figure~\ref{fig:mickey}.
\begin{figure}[htbp]
\setlength{\unitlength}{1pt}
\begin{picture}(150,150)
\put(0,0){\includegraphics[height=150pt]{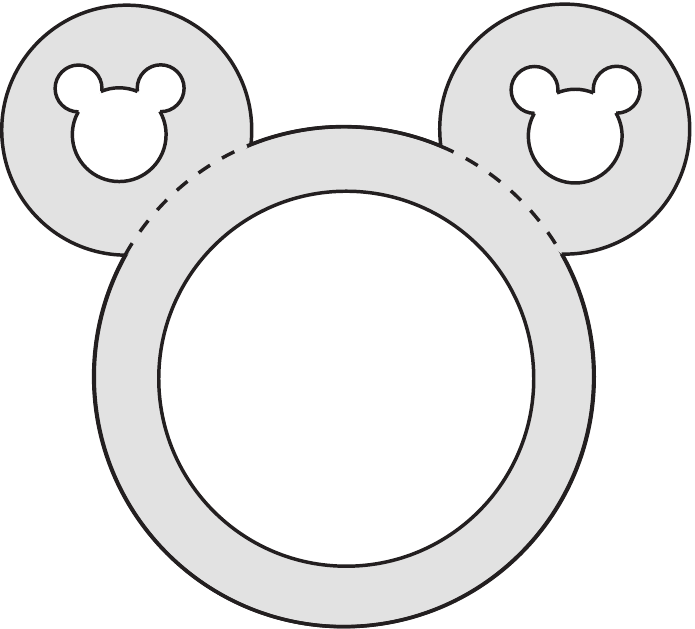}}
\put(54,37){$M_R\times [0,1]$}
\put(-55,127){$M_1\times [0,1]$}
\put(170,127){$M_m\times [0,1]$}
\put(73,127){$\dots$}
\end{picture}
\caption{The cobordism $E$}\label{fig:mickey}
\end{figure}
The dashed arcs in the figure represent the solid tori $\eta_i\times D^2$. Observe that the `outer' boundary component of $E$ is $M_L$.
Note that $E$ deformation retracts to $\overline{E}= M_L\cup (\coprod_i(\eta_i\times D^2))$, where each solid torus is attached to $M_L$ along its boundary. Hence $\overline{E}$ is obtained from $M_L$ by adding $m$ $2$-cells along the loops $\mu_{\eta_i}=l_i$, and $m$ $3$-cells. Thus, by our assumption, $\phi$ extends uniquely to $\overline{\phi}:\pi_1(\overline{E})\to \G$ and hence $\ov\phi:\pi_1(E)\to\G$. Clearly the restrictions of $\overline{\phi}$ to $\pi_1(M_i)$ and $\pi_1(M_R\times \{0\})$ agree with $\phi_i$ and $\phi_R$ respectively. It follows that that
$$
\rho(M_L,\phi) - \rho(M_R,\phi_R) = \sum^m_{i=1}\rho(M_i,\phi_i) + \sigma^{(2)}_\G(E,\overline{\phi})- \sigma(E).
$$
Now we claim that both the ordinary signature of $E$, $\s(E)$, as well as the $L^2$-signature $\s^{(2)}_\G(E)$, vanish. The first part of the proposition will follow immediately.

\begin{lem}\label{lem:mickeysig} With respect to \emph{any} coefficient system, $\phi:\pi_1(E)\to \Gamma$,  the signature of the equivariant intersection form on $H_2(E;\mathbb{Z}\G)$ is zero.
%Here we mean to include the usual signature if $\phi$ is trivial, the (von Neumann)  $L^{(2)}$-signature %$\s^{(2)}_\G(E)$ if $\G$ is a countable discrete group, and the Atiyah-Singer signature if $\G$ is a finite unitary %group $U(k)$.
\end{lem}
\begin{proof}[Proof of Lemma~\ref{lem:mickeysig}] We show that all of the (twisted) second homology of $E$ comes from its boundary. This immediately implies the claimed result.

Consider the Mayer-Vietoris sequence with coefficients twisted by $\phi$:
$$
H_2(M_R\x~I)\oplus_i H_2(M_{i}\x~I)\ra H_2(E)\ra H_1(\amalg\eta_i\x D^2)\ra H_1(M_R\x I)\oplus_i H_1(M_i\x I).
$$
We claim that each of the inclusion-induced maps
$$
H_1(\eta_i\x D^2)\ra H_1(M_i)
$$
is injective. If $\phi(\eta_i)=1$ then, since $\eta_i$ is equated to the meridian of $K_i$, $\phi(\mu_{K_i})=1$. Since $\mu_{K_i}$ normally generates $\pi_1(M_i)$, it follows that the coefficient systems on $\eta_i\x D^2$ and $M_i$ are trivial and hence the injectivity follows from the injectivity with $\mathbb{Z}$-coefficients, which is obvious since $\mu_{K_i}$ generates $H_1(M_i)$. Suppose now that $\phi(\eta_i)\neq 1$. Since $\eta_i\x D^2$ is homotopy equivalent to a circle, it suffices to consider the cell structure on $S^1$ with one $1$-cell. Then the boundary map in the $\mathbb{Z}[\pi_1(S^1)]$ cellular chain complex for $S^1$ is multiplication by $t-1$ so the boundary map in the equivariant chain complex
$$
C_1\otimes \mathbb{Z}\G \overset{\partial\otimes id}\longrightarrow C_0\otimes \mathbb{Z}\G
$$
is easily seen to be left multiplication by $\phi(\eta_i)-1$. Since $\phi(\eta_i)\neq 1$ and $\mathbb{Z}\G$ is a domain, this map is injective. Thus $H_1(\eta_i\x D^2;\mathbb{Z}\G)=~0$ so injectivity holds.

Now using the Mayer-Vietoris sequence, any element of $H_2(E)$ comes from $H_2(M_R\x\{0\})\oplus_i H_2(M_i\x\{0\})$, in particular from $H_2(\p E)$. Thus the intersection form on $H_2(E)$ is identically zero and any signature vanishes.
\end{proof}

This completes the proof of the first part of Lemma \ref{lem:additivity}.

If $\pi_1(S^3-K_i)^{(1)}\subset$ kernel($\phi_i$) then $\phi_i$ factors through the abelianization of $H_1(S^3\backslash K_i)$ and so by parts $2,3$ and $4$ of Proposition~\ref{prop:rho invariants}, we are done. In particular if $\G^{(n+1)}=1$ and $\eta_i \in \pi_1(M_R)^{(n)}$, then $\phi_i(\mu_i)\in\G^{(n)}$ for each $i$ as we have shown in the paragraph above the lemma, so $\phi_i(\pi_1(S^3\backslash K_i)^{(1)})\subset\G^{(n+1)}=1$. Thus each $\phi_i$ factors through the abelianization.
\end{proof}

We want to collect, in the form of a lemma,  the technical properties of the cobordism $E$ that we have established in the proofs above. These will be used often in later sections.

\begin{lem}\label{lem:mickeyfacts} With regard to $E$ as above, the inclusion maps induce
\begin{itemize}
\item [(1)] an epimorphism $\pi_1(M_L)\to \pi_1(E)$ whose kernel is the normal closure of the longitudes of the infecting knots $K_i$ viewed as curves $\ell_i\subset S^3-K_i\subset M_L$;
\item [(2)] isomorphisms $H_1(M_L)\to H_1(E)$ and $H_1(M_R)\to H_1(E)$;
\item [(3)] and isomorphisms $H_2(E)\cong H_2(M_L)\oplus_i H_2(M_{K_i})\cong H_2(M_R)\oplus_i H_2(M_{K_i})$.
\item [(4)] The longitudinal push-off of $\eta_i$, $\ell_{\eta_i}\subset M_L$ is isotopic in $E$ to $\eta_i\subset M_R$ and to the meridian of $K_i$, $\mu_i\subset M_{K_i}$.
\item [(5)] The longitude of $K_i$, $\ell_i\subset M_{K_i}$ is isotopic in $E$ to the reverse of the meridian of $\eta_i$, $(\mu_{\eta_i})^{-1}\subset M_L$ and to the longitude of $K_i$ in $S^3-K_i\subset M_L$ and to the reverse of the meridian of $\eta_i$, $(\mu_{\eta_i})^{-1}\subset M_R$ (the latter bounds a disk in $M_R$).
\end{itemize}
\end{lem}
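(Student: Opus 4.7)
The plan is to exploit the deformation retraction $E \simeq \overline{E} := M_L\cup (\coprod_i(\eta_i\times D^2))$ established in the proof of Lemma~\ref{lem:additivity}, combined with Mayer--Vietoris sequences for two different decompositions of $E$. For (1), each solid torus $\eta_i\times D^2$ admits a CW structure consisting of its boundary torus $T_i$ together with one 2-cell (attached along the meridian $\mu_{\eta_i}$) and one 3-cell, so $\overline{E}$ is obtained from $M_L$ by attaching $m$ 2-cells along $\mu_{\eta_i}$ followed by $m$ 3-cells. Under the infection identification $\mu_{\eta_i}=\ell_i^{-1}$ in $M_L$, so van Kampen immediately gives $\pi_1(E)\cong \pi_1(M_L)/\langle\langle \ell_i\rangle\rangle$.

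For (2) and the first isomorphism in (3), I would apply Mayer--Vietoris with $U=M_R\times I$, $V=\coprod_i M_i\times I$, and $U\cap V=\coprod_i(\eta_i\times D^2)$, which deformation retracts to $\coprod_i S^1$. The connecting map $H_1(U\cap V)\to H_1(U)\oplus H_1(V)$ sends the $i$-th core to $(lk(\eta_i,R)\mu_R,\ \mu_{K_i})$, because the core is isotopic to $\eta_i$ in $M_R$ and, via the infection identification, to $\mu_{K_i}$ in $M_i$. Its $V$-coordinate alone is injective, so the whole map is injective; the Mayer--Vietoris sequence then collapses to give $H_2(E)\cong H_2(M_R)\oplus\bigoplus_i H_2(M_{K_i})$, while $H_1(E)$ is the cokernel, which reduces via the relations $\mu_{K_i}=-lk(\eta_i,R)\mu_R$ to $\mathbb{Z}\langle\mu_R\rangle$. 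Hence $H_1(M_R)\to H_1(E)$ is an isomorphism. The iso $H_1(M_L)\to H_1(E)$ follows from abelianizing (1): each $\ell_i$ is a zero-framed longitude of $K_i$ and hence null-homologous in $S^3-K_i\subset M_L$, so no relations are introduced.

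For the second isomorphism in (3), apply Mayer--Vietoris to the decomposition $\overline{E}=M_L\cup(\coprod_i \eta_i\times D^2)$ with intersection $\coprod_i T_i$. Each torus $T_i$ separates $M_L$, so $[T_i]=0\in H_2(M_L)$, and the map $H_1(T_i)\to H_1(M_L)\oplus H_1(\eta_i\times D^2)$ has kernel exactly $\mathbb{Z}\langle\mu_{\eta_i}\rangle$ (since $\ell_{\eta_i}$ is nontrivial on both sides). This yields $H_2(\overline{E})\cong H_2(M_L)\oplus \mathbb{Z}^m$, with each extra generator represented by a closed surface $\Sigma_i$ obtained by capping a Seifert surface for $\ell_i\subset S^3-K_i\subset M_L$ with the meridian disk of $\eta_i\times D^2$. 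Under the infection identification this meridian disk coincides with the meridian disk of the zero-surgery solid torus of $M_i$, so $\Sigma_i$ is isotopic in $E$ to the standard capped Seifert surface generating $H_2(M_{K_i})$.

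Statements (4) and (5) follow geometrically from the construction by sliding curves along the product structures of $M_R\times I$ and $M_i\times I$ inside $E$. For (4): $\ell_{\eta_i}\subset T_i\subset M_L$ is isotopic within $\eta_i\times D^2\subset M_R\times\{1\}$ to the core $\eta_i$, which then slides along $M_R\times I$ to $\eta_i\subset M_R$; alternatively, $\ell_{\eta_i}=\mu_{K_i}$ on $T_i$ under the infection identification, and $\mu_{K_i}\subset M_i\times\{0\}$ slides along $M_i\times I$ to $\mu_i\subset M_{K_i}$. Part (5) is analogous with $\mu_{\eta_i}^{-1}=\ell_i$ playing the role of $\ell_{\eta_i}=\mu_{K_i}$; since $\mu_{\eta_i}^{-1}$ bounds the meridian disk of $\eta_i\times D^2$ inside $M_R$, the final parenthetical claim follows. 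The main subtlety is in (3): identifying the extra generator $\Sigma_i$ of $H_2(\overline{E})$ with the standard generator of $H_2(M_{K_i})$ requires the geometric content of (4) and (5) to see that the meridian disk of $\eta_i\times D^2$ simultaneously serves as the capping disk for $\ell_i$ in the zero-surgery solid torus of $M_i$.
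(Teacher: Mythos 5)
Your proof is correct and follows the same overall strategy as the paper: the deformation retraction $E\simeq\overline{E}$ together with the CW/van Kampen argument gives (1) and the $M_L$ part of (2), (3), while the Mayer--Vietoris sequence for $E = (M_R\times I)\cup(\coprod_i M_i\times I)$ gives the $M_R$ part of (2), (3) (this is exactly the "Mayer--Vietoris argument as in the proof just above" that the paper cites). One small slip of terminology: you announce you will prove "the first isomorphism in (3)" via the $M_R\times I\cup\coprod M_i\times I$ decomposition, but that decomposition yields $H_2(E)\cong H_2(M_R)\oplus_i H_2(M_{K_i})$, which is the \emph{second} isomorphism as listed in the statement; similarly your later paragraph actually proves the first. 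The labels should be swapped, but the underlying computations and the split short exact sequences are correct. Your explicit identification of the extra $H_2$-generators $\Sigma_i$ with the capped Seifert surfaces of the $M_{K_i}$ (and the observation that this requires the geometric content of (4) and (5)) is a useful elaboration that the paper leaves implicit.
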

\begin{proof} We saw above that $E\sim\overline{E}$ is obtained from $M_L$ by adding $m$ $2$-cells along the loops $\mu_{\eta_i}=\ell_i$, and then adding $m$ $3$-cells that go algebraically zero over these $2$-cells. Property $(1)$ and the first part of properties $(2)$ and $(3)$ follow. The second parts of properties $(2)$ and $(3)$ follow from a Mayer-Vietoris argument as in the proof just above.
 Properties $(4)$ and $(5)$ are obvious from the definitions of infection and of $E$. \end{proof}

\section{First-order $L^{(2)}$-signatures}\label{sec:firstordersigs}

For a knot $K$ the $\rho$-invariant, $\rho_0(K)$, associated to the abelianization of $\pi_1(M_K)$, has played a central role in knot concordance since it is the average of classical signatures. Call this a \textbf{zero-order signature}. In this section we define first-order signatures for a knot $K$ and make some elementary observations. These signatures are essentially the $L^{(2)}$ analogues of Casson-Gordon invariants, though not necessarily associated to characters corresponding to metabolizers. They will play a small but central role in our proofs.

Suppose $K$ is a knot in $S^3$, $G=\pi_1(M_K)$ and $\mathcal{A}_0=\mathcal{A}_0(K)$ is its classical rational Alexander module. Note that since the longitudes of $K$ lie in $\pi_1(S^3-K)^{(2)}$,
$$
\mathcal{A}_0\equiv G^{(1)}/G^{(2)}\otimes_{\mathbb{Z}[t,t^{-1}]}\mathbb{Q}[t,t^{-1}]
$$
Each submodule $P\subset \mathcal{A}_0$ corresponds to a unique metabelian quotient of $G$,
$$
\phi_P:G\to G/\tilde{P},
$$
by setting
$$
\tilde{P}\equiv \{x~| x\in \text{kernel}(G^{(1)}\to G^{(1)}/G^{(2)}\to \mathcal{A}_0/P)\}.
$$
\noindent Note that $G^{(2)}\subset \tilde{P}$ so $G/\tilde{P}$ is metabelian. In summary, to any such submodule $P$ there corresponds a real number, the Cheeger-Gromov invariant, $\rho(M_K, \phi_P:G\to G/\tilde{P})$.

\begin{defn}\label{defn:highordersignatures} The \textbf{first-order $\mathbf{L^{(2)}}$-signatures} of a knot $K$ are the real numbers
$\rho(M_K, \phi_P)$ where $P\subset \mathcal{A}_0(K)$ satisfies $P\subset P^\perp$ with respect to the classical Blanchfield form $\mathcal{B}\ell_0$ on $K$ (i.e. $\mathcal{B}\ell_0(p,p')=0$ for all $p,p'\in P$). Slightly more generally, in light of property $2$ of Proposition~\ref{prop:rho invariants}, we say that $\rho(M_K,\phi)$ for $\phi:\pi_1(M_K)\to\G$ is a \textbf{first-order signature of} $K$ if
\begin{itemize}
\item [1.] $\phi$ factors through $G/G^{(2)}$, where $G=\pi_1(M_K)$;
\item [2.]$\text{kernel}(\phi)= \text{kernel}(G^{(1)}\to G^{(1)}/G^{(2)}\to \mathcal{A}_0/P)$
for some submodule $P\subset \mathcal{A}_0(K)$ such that $P\subset P^\perp$ with respect to the classical Blanchfield form on $K$.
\end{itemize}
\end{defn}
The first-order signatures that correspond to metabolizers, that is submodules $P$ for which $P=P^\perp$, have been previously studied and are closely related to Casson-Gordon-Gilmer invariants ~\cite{Let}~\cite{Fr2}~\cite{Fr3}~\cite{Ki1}. Since $P=0$ always satisfies $P\subset P^\perp$, we give a special name to the signature corresponding to this case.

\begin{defn}\label{defn:rho1} $\mathbf{\rho^1(K)}$ of a knot $K$ is the first-order $L^{(2)}$-signature given by the Cheeger-Gromov invariant $\rho(M_K, \phi:G\to G/G^{(2)})$.
\end{defn}

Similar to Casson-Gordon invariants, if $K$ is topologically slice in a rational homology 4-ball, then \emph{one} of the first-order signatures of $K$ must be zero. We will prove a more general statement in Proposition~\ref{prop:firstordervanish}. However none of the first-order signatures is itself a concordance invariant. In particular there exist ribbon knots with $\rho^1\neq 0$ as we shall see below.

A genus one algebraically slice knot has precisely two metabolizers, $P_1$, $P_2$ for the Seifert form and so has precisely $3$ first-order signatures, two corresponding to $P_1$ and $P_2$ and the third corresponding to $P_3=0$.

\begin{ex}\label{ex:first-ordersigs} Consider the knot $K$ in Figure~\ref{fig:examplehighersigs}. This knot is obtained from a ribbon knot $R$ by two infections on the band meridians $\alpha, \beta$ (as in the left-hand side of Figure~\ref{fig:Rdoubling}). Thus $\{\alpha, \beta \}$ is a basis of $\mathcal{A}_0(K)= \mathcal{A}_0(R)$.
\begin{figure}[htbp]
\setlength{\unitlength}{1pt}
\begin{picture}(200,160)
\put(0,0){\includegraphics[height=150pt]{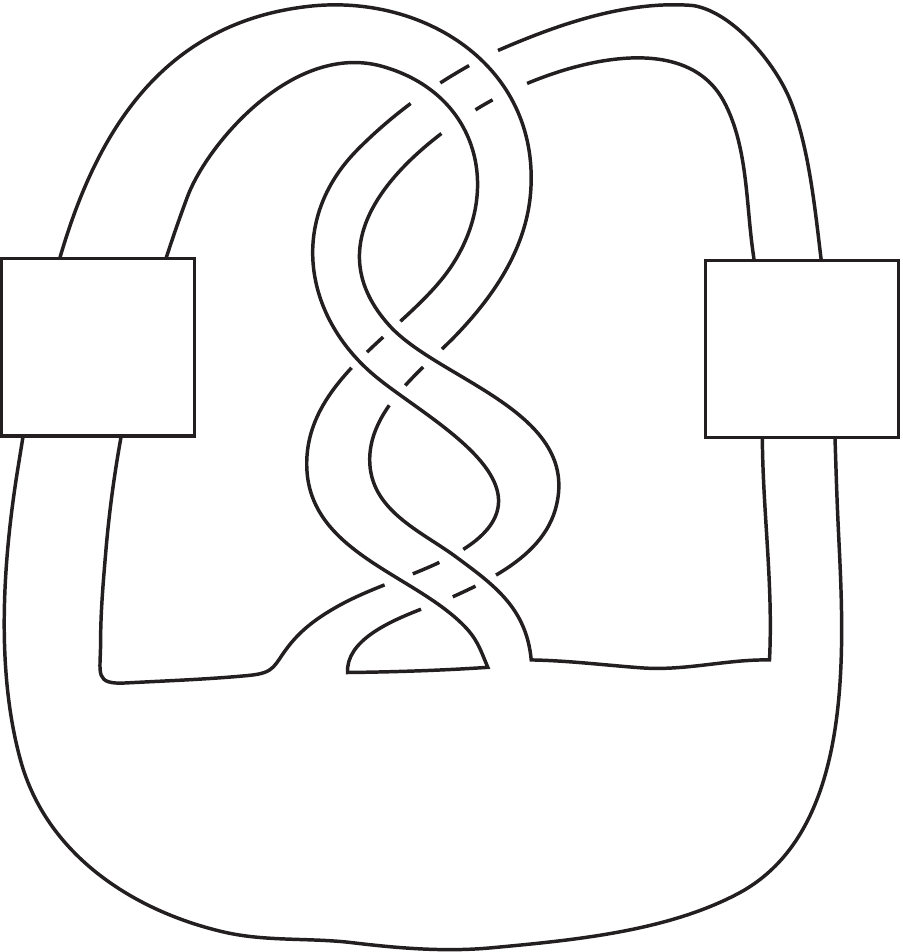}}
\put(-50,70){$K=$}
\put(7,92){$K_\alpha$}
\put(122,92){$K_\beta$}
\end{picture}
\caption{A genus $1$ algebraically slice knot $K$}\label{fig:examplehighersigs}
\end{figure}
There are $3$ submodules $P$ for which $P\subset P^\perp$, namely $P_0=0$, $P_\alpha=\left<\alpha\right>$ and $P_\beta=\left<\beta\right>$. We may apply Lemma~\ref{lem:additivity} to show
$$
\rho(M_K, \phi_P)=\rho(M_{R}, \phi_P)+\epsilon^\alpha _P\rho_0(K_\alpha)+\epsilon^\beta _P\rho_0(K_\beta)
$$
where $\epsilon^\alpha_P$ is $0$ or $1$ according as $\phi_P(\alpha)=1$ or not (similarly for $\epsilon^\beta_P$). For our example $\phi_{P_\alpha}(\alpha)=1$ and $\phi_{P_\alpha}(\beta)\neq 1$. Similarly $\phi_{P_\beta}(\beta)=1$ and $\phi_{P_\beta}(\alpha)\neq 1$. By contrast $\phi_{P_0}(\alpha)\neq 1$ and $\phi_{P_0}(\beta)\neq 1$. Moreover $P_\alpha$ corresponds to the kernel $\tilde{P}_\alpha$, of $\pi_1(S^3-R)\to \pi_1(B^4-\Delta_\alpha)/\pi_1(B^4-\Delta_\alpha)^{(2)}$ for the ribbon disk $\Delta_\alpha$ for $R$ obtained by ``cutting the $\alpha$-band''. (Similarly for $P_\beta$.) Thus in both cases the maps $\phi_P$ on $M_{R_1}$ extend over ribbon disk exteriors. Consequently $\rho(M_{R}, \phi_P)=0$ for $P=P_\alpha$ and $P=P_\beta$, by Theorem \ref{thm:oldsliceobstr}. Of course $\rho(M_R, \phi_{P_0})=\rho^1(R)$ by definition. Putting this all together we see that the first-order signatures of the knot $K$ are $\{\rho_0(K_\alpha),\rho_0(K_\beta),\rho^1(R)+\rho_0(K_\alpha)+\rho_0(K_\beta)\}$. Note that if we choose $K_\alpha$ to be the unknot and choose $K_\beta$ so that $\rho_0(K_\beta)\neq -\rho^1(R)$ then $K$ is a ribbon knot with $\rho^1\neq 0$.
\end{ex}

We remark that $\rho^1$ vanishes for a $(\pm)$-amphichiral knot by Proposition~\ref{prop:amphi} but it is not true that all the first-order signatures vanish for an amphichiral knot.

\begin{prop}\label{prop:amphi} If a $3$-manifold $M$ admits an orientation-reversing homeomorphism, then $\rho(M,\phi)=0$ for any $\phi$ whose kernel is a characteristic subgroup of $\pi_1(M)$.
\end{prop}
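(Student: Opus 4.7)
The plan is to exploit two basic properties of the Cheeger--Gromov $\rho$-invariant: (i) naturality under orientation-preserving homeomorphisms with compatibly pulled-back coefficient systems, namely $\rho(M_1,\phi_2\circ f_*)=\rho(M_2,\phi_2)$ for any orientation-preserving homeomorphism $f\colon M_1\to M_2$, and (ii) sign change under orientation reversal, $\rho(-M,\psi)=-\rho(M,\psi)$. I will also invoke the fact that $\rho(M,\phi)$ depends only on the regular cover of $M$ determined by $\ker\phi$ (together with its free deck group action), so that two coefficient systems with the same kernel yield the same $\rho$-value; this is standard and follows directly from the covering-space/von Neumann algebra definition of $\rho$, although it is not listed explicitly among the properties in Proposition~\ref{prop:rho invariants}.

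Let $h\colon M\to M$ be the given orientation-reversing homeomorphism, and regard $h$ as an orientation-preserving homeomorphism $M\to -M$. By Proposition~\ref{prop:rho invariants}(2) we may assume that $\phi$ is surjective onto $\Gamma:=\pi_1(M)/\ker\phi$. Since $\ker\phi$ is characteristic, $h_*(\ker\phi)=\ker\phi$, so the coefficient system $\phi':=\phi\circ h_*^{-1}\colon\pi_1(-M)=\pi_1(M)\to\Gamma$ has the same kernel as $\phi$. By the covering-space remark above, $\rho(M,\phi')=\rho(M,\phi)$.

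Now $\phi'\circ h_*=\phi$, so naturality (i) applied to $h\colon M\to -M$ yields $\rho(M,\phi)=\rho(-M,\phi')$, and by (ii) this equals $-\rho(M,\phi')=-\rho(M,\phi)$. Therefore $\rho(M,\phi)=0$, as required. The main (and essentially only) step requiring care is the assertion that $\rho$ depends only on $\ker\phi$; everything else is a purely formal manipulation combining the two orientation-related properties of $\rho$ with the characteristic-subgroup hypothesis, which is precisely what guarantees that $h_*$ descends to an automorphism of $\Gamma$ and hence that $\phi$ and $\phi'$ share a kernel.
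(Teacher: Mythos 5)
Your proof is correct and is essentially the same as the paper's: both use naturality of $\rho$ under orientation-preserving homeomorphisms, the sign change under orientation reversal, and the fact that $\rho$ depends only on $\ker\phi$, combined with the characteristic-subgroup hypothesis to conclude $\rho(M,\phi)=-\rho(M,\phi)$.
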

\begin{proof}[Proof of Proposition~\ref{prop:amphi}] Suppose $h:-M\to M$ is an orientation preserving homeomorphism. Then for any $\phi$,
$$
\rho(M,\phi)=\rho(-M, \phi\circ h_*)=-\rho(M,\phi\circ h_*).
$$
Since the $\rho$ invariant depends only on the kernel of $\phi$, which, being characteristic, is the same as the kernel of $\phi\circ h_*$, the last term equals $-\rho(M,\phi)$. Since the $\rho$ invariant is real-valued, it is zero.
\end{proof}

A genus one knot that is \emph{not} zero in the rational algebraic concordance group (that is there is no metabolizer for the rational Blanchfield form) has precisely one first-order signature, namely $\rho^1(K)$ since any \emph{proper} submodule $P$ of the rational Alexander module satisfying $P\subset P^\perp$ would have to be a (rational) metabolizer.

\begin{ex}
The knot $K$ in Figure~\ref{fig:examplehighersigsfigeight} is of order two in the rational algebraic concordance group and therefore $\rho^1$ is the only first-order signature. Using Lemma~\ref{lem:additivity}, we see that $\rho^1(K)=\rho^1(\text{figure-eight})+2\rho_0(K')=2\rho_0(K')$. Therefore if $K'$ is chosen so that $\rho_0(K')\neq 0$ then $K$ is not slice in a rational homology ball.
\begin{figure}[htbp]
\begin{picture}(185,118)
\put(0,0){\includegraphics{figure8}}
\put(10,67){$K'$}
\put(163,67){$K'$}
\end{picture}
\caption{} \label{fig:examplehighersigsfigeight}
\end{figure}
\end{ex}

%Moved to Bordism Section:
%\begin{prop}\label{prop:firstordervanish} If $K$ is topologically slice in a rational homology $4$-ball (or more %generally if $K$ is rationally $(1.5)$-solvable) then one of the first-order signatures of $K$ is zero.
%\end{prop}

The definition of the first-order signatures is not quite the same as that implicit in the work of Casson-Gordon-Gilmer and in more generality in ~\cite[Theorem 4.6]{COT}. One would hope that one need only consider those $P$ such that $P=P^\perp$. However this is false in the context of rational concordance. The knots in Figure~\ref{fig:examplehighersigsfigeight} are in general not slice in a rational homology ball, but this fact is \emph{not} detected by signatures associated to metabolizers of the classical rational Blanchfield form. But this \emph{is} detected by $\rho^1$. Note that the figure-eight knot is slice in a rational homology $4$-ball in such a way that the Alexander module of the figure-eight knot \emph{injects} into $\pi/\pi^{(2)}_r$ where $\pi$ is the fundamental group of the complement of the slicing disk!

%put Connie's here
\section{$J_2(K)$}\label{J2}

Recall that, for $J_2(K)$, as in Figure~\ref{fig:family}, all classical invariants as well as those of Casson-Gordon vanish. In this section, as a warm-up for more general results, we prove that higher-order signatures yield further obstructions to $J_2(K)$ being a slice knot. We set $J_{0}=K$ and let $J_{2}=J_{2}(K)=R\circ R(K)$ where $R$ is the $9_{46}$ knot.

\begin{thm}\label{thm:J2notslice} If $J_2(K)$ is a slice knot then $\rho_0(K)\in \{0,-\frac{1}{2}\rho^1(R)\}$ where $\rho^1(R)$ is the real number from Definition \ref{defn:rho1}.
\end{thm}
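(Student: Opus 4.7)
The plan is to assume $J_2(K)$ is slice, apply the vanishing theorem (Theorem~\ref{thm:oldsliceobstr}) to the $\rho$-invariant of $M_{J_2}$ with respect to a PTFA coefficient system of derived length $3$ that extends across the slice-disk exterior, and then use the additivity of $\rho$ under infection (Lemma~\ref{lem:additivity}) twice --- once for the outer infection $J_2=R(J_1,J_1)$ along $\alpha_2,\beta_2$ and once for the inner infection $J_1=R(K,K)$ along $\alpha_1,\beta_1$ --- to convert this vanishing into a linear equation in $\rho^1(R)$, $\rho_0(K)$, and $\rho$-invariants of $M_R$ that vanish automatically because $R$ is slice.

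Setting $\Gamma=\pi/\pi^{(3)}_r$ where $\pi$ is the fundamental group of the slice-disk exterior, and letting $\phi:\pi_1(M_{J_2})\to\Gamma$ be the induced map, the hypothesis $\alpha_i,\beta_i\in\pi_1(M_R)^{(1)}$ combined with $\Gamma^{(3)}=1$ puts us in the situation described just before Lemma~\ref{lem:additivity} at both levels, so the longitudes of the infection knots die and the innermost induced coefficient system on $\pi_1(M_K)$ factors through abelianization (Proposition~\ref{prop:rho invariants}(4)). Iterating Lemma~\ref{lem:additivity} yields the decomposition
\[
0=\rho(M_R,\phi_R) + \sum_{*\in\{\alpha_2,\beta_2\}}\epsilon_*\bigl(\rho(M_R,\psi^*_R)+(\epsilon^*_{\alpha_1}+\epsilon^*_{\beta_1})\,\rho_0(K)\bigr),
\]
where each $\epsilon\in\{0,1\}$ records whether the relevant induced map kills the indicated infection curve.

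The key technical step is to constrain the kernels $P\subseteq\mathcal{A}_0(R)=\langle\alpha,\beta\rangle$ of these induced metabelian maps via the classical Blanchfield form of $R$. A Blanchfield-duality argument in the spirit of Casson-Gordon, Letsche, and Leidy (the $n=1$ case of the higher-order machinery developed later in the paper) forces the outer kernel $P_{\mathrm{out}}$ to be a genuine metabolizer, which for $R=9_{46}$ means $P_{\mathrm{out}}\in\{\langle\alpha\rangle,\langle\beta\rangle\}$, while the inner kernel $P_{\mathrm{in}}$ need only satisfy $P_{\mathrm{in}}\subseteq P_{\mathrm{in}}^{\perp}$, giving $P_{\mathrm{in}}\in\{0,\langle\alpha\rangle,\langle\beta\rangle\}$. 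Whenever $P$ is one of the two non-trivial metabolizers, the induced coefficient system extends across the ribbon-disk exterior of $R$ obtained by cutting that band (as in Example~\ref{ex:first-ordersigs}), so $\rho(M_R,\cdot)=0$ by Theorem~\ref{thm:oldsliceobstr} and exactly one $\epsilon$ equals $1$; whereas $P=0$ gives $\rho(M_R,\cdot)=\rho^1(R)$ with both $\epsilon=1$.

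Plugging these values in: if $P_{\mathrm{in}}\neq 0$ the bracketed inner summand collapses to $\rho_0(K)$, forcing $\rho_0(K)=0$; if $P_{\mathrm{in}}=0$ it collapses to $\rho^1(R)+2\rho_0(K)$, forcing $\rho_0(K)=-\tfrac{1}{2}\rho^1(R)$. In either case $\rho_0(K)\in\{0,-\tfrac{1}{2}\rho^1(R)\}$, which is the desired conclusion. The principal obstacle is the metabolizer conclusion for $P_{\mathrm{out}}$: this is precisely the Blanchfield-duality theorem in its simplest higher-order instance, and it constitutes the technical core of the paper. A secondary but routine check is that the metabelian maps $\phi_R$ and $\psi^*_R$ on the various copies of $\pi_1(M_R)$ really do extend across the claimed ribbon-disk exteriors, which follows by a standard lifting argument using that the infection curves lie in $\pi_1(M_R)^{(1)}$.
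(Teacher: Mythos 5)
Your overall strategy --- expand $\rho(M_{J_2},\phi)=0$ twice using Lemma~\ref{lem:additivity}, then constrain the kernels of the resulting coefficient systems via Blanchfield duality --- is the right one and matches the paper (which packages the key step as Theorem~\ref{thm:generalJ2notslice} and then deduces the present theorem from Example~\ref{ex:first-ordersigs}). However, as written there are genuine gaps.

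First, your argument that $\rho(M_R^{\mathrm{out}},\phi_R^{\mathrm{out}})=0$ does not go through with $\Gamma=\pi_1(V)/\pi_1(V)^{(3)}_r$. From the classical metabolizer fact one only obtains $j_*(\alpha_2)\in\pi^{(2)}_r$, not $j_*(\alpha_2)\in\pi^{(3)}_r$; hence $\phi_R^{\mathrm{out}}(\alpha_2)$ lies in $\Gamma^{(2)}$ but may well be nontrivial, so $\phi_R^{\mathrm{out}}$ need not factor through $\pi_1(\mathcal R)$ for the ribbon disk exterior $\mathcal R$, and Theorem~\ref{thm:oldsliceobstr} is not applicable in the way you claim. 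The paper resolves this by \emph{gluing} $\mathcal R$ onto $V\cup(-E)$ to form $W$, redefining $\Gamma=\pi_1(W)/\pi_1(W)^{(3)}_r$ (so that $\alpha_2$ genuinely dies, since it bounds a disk in $\mathcal R\subset W$), and then using additivity of signature defects over $V$, $E$, $\mathcal R$ rather than isolating $\rho(M_R,\cdot)$. For the same reason your claim that ``exactly one $\epsilon$ equals $1$'' among $\{\epsilon_{\alpha_2},\epsilon_{\beta_2}\}$ is unjustified: the $\alpha_2$-contribution vanishes not because $\phi_{\alpha_2}$ is trivial, but because $\phi_{\alpha_2}$ factors through the abelianization and $\rho_0(J_1)=0$ (Cooper and \cite[Theorem 5.2]{COT2}); this last ingredient is missing from your argument, and without it the cancellation does not close up.

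Second, you have the hierarchy of Blanchfield constraints inverted. The constraint on $P_{\mathrm{out}}$ (the kernel of $\mathcal A_0(J_2)\to H_1(V;\mathbb Q[t,t^{-1}])$ being a metabolizer) is the entirely classical statement, not the ``technical core.'' The genuinely new and hard part of the proof is that $P_{\mathrm{in}}\subset P_{\mathrm{in}}^{\perp}$ with respect to the \emph{classical} Blanchfield form of $J_1$; this requires identifying $P_{\mathrm{in}}$ via the commutative-diagram argument in the proof of Theorem~\ref{thm:generalJ2notslice} and invoking Theorem~\ref{thm:nontriviality} with $k=2$ (i.e.\ the second-order, unlocalized Blanchfield duality on the bordism $W$). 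Describing it as ``the $n=1$ case of the higher-order machinery'' understates it to the point of being misleading, and leaves the actual hard step of the proof unaddressed.
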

We obtain this as a corollary of the following more general result. Consider a knot $J$ as shown in Figure \ref{fig:genJ2}. Note that if $\bar{J}$ is algebraically slice then $J$ has vanishing Casson-Gordon invariants, so is indistinguishable from a slice knot by all previously known techniques.

\begin{thm}\label{thm:generalJ2notslice} If the knot $J$ of Figure~\ref{fig:genJ2} is a slice knot (or even $(2.5)$-solvable) then one of the first-order signatures of $\bar{J}$ vanishes.
\end{thm}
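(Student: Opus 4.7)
The plan is to bootstrap the classical slice obstruction argument one level up the derived series using $L^{(2)}$-signatures with metabelian coefficients. Assuming $J$ is $(2.5)$-solvable via a $4$-manifold $V$ with $\partial V = M_J$, I set $\pi = \pi_1(V)$ and take $\Gamma = \pi/\pi^{(2)}_r$, a metabelian PTFA group, with $\phi\colon \pi_1(M_J)\to\Gamma$ induced by inclusion. The appropriate $(n.5)$-solvable analogue of Theorem~\ref{thm:oldsliceobstr} (to be established in Section~\ref{sec:bordism}) then forces $\rho(M_J,\phi)=0$.

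The knot $J$ is built from $R=9_{46}$ by infection along the band meridians $\alpha,\beta\in\pi_1(M_R)^{(1)}$ with copies of $\bar{J}$. Since $\Gamma^{(2)}=1$ and $\alpha,\beta$ lie in the commutator subgroup of $\pi_1(M_R)$, the longitude-killing hypothesis of Lemma~\ref{lem:additivity} is automatic, and additivity yields
$$
0 = \rho(M_J,\phi) = \rho(M_R,\phi_R) + \rho(M_{\bar{J}},\phi_\alpha) + \rho(M_{\bar{J}},\phi_\beta).
$$
Because $(2.5)$-solvability implies $(1.5)$-solvability and $\mathcal{A}_0(J)\cong\mathcal{A}_0(R)$ (the infection curves lying in $\pi_1^{(1)}$), the submodule $P_R\subset\mathcal{A}_0(R)$ associated to $\phi_R$ must be a metabolizer of the classical Blanchfield form of $R$; thus $P_R\in\{\langle\alpha\rangle,\langle\beta\rangle\}$, each of which arises from a ribbon disk for $R$. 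Hence $\phi_R$ extends over that ribbon disk exterior and $\rho(M_R,\phi_R)=0$ by Theorem~\ref{thm:oldsliceobstr}.

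The crux is to recognize each $\rho(M_{\bar{J}},\phi_i)$ as a first-order signature of $\bar{J}$ in the sense of Definition~\ref{defn:highordersignatures}. Since $\phi$ kills the longitudes of the infecting copies and $\Gamma$ is metabelian, each $\phi_i$ factors through $\pi_1(M_{\bar{J}})/\pi_1(M_{\bar{J}})^{(2)}$ and so determines a submodule $P_i\subset\mathcal{A}_0(\bar{J})$; the main technical step is verifying $P_i\subset P_i^\perp$ with respect to the classical Blanchfield form of $\bar{J}$. I would establish this by invoking the new, non-localized higher-order Blanchfield duality of Section~\ref{sec:bordism}: the $(2.5)$-solution provides a second-order metabolizer for a Blanchfield form on the first-order Alexander module of $J$ relative to $P_J$, and naturality of the form under infection transports this data across the infection tori to yield the self-annihilation condition on each $\bar{J}$-factor. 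With $\rho(M_R,\phi_R)=0$ and both remaining summands identified as first-order signatures of $\bar{J}$, a short case analysis on $P_R$ forces at least one of them to vanish individually: for instance if $P_R=\langle\alpha\rangle$ then $\phi_\alpha$ is trivial on $\pi_1(M_{\bar{J}})$ (its meridian is $\alpha$), so $\rho(M_{\bar{J}},\phi_\alpha)=0$, and the additivity relation forces $\rho(M_{\bar{J}},\phi_\beta)=0$, giving the desired vanishing first-order signature of $\bar{J}$. The principal obstacle is the Blanchfield duality step: proving $P_i\subset P_i^\perp$ without passing to an Ore localization of the metabelian group ring is the technical heart of the paper, and it is precisely this improvement that removes the Cheeger-Gromov analytical bound used in \cite{CT}.
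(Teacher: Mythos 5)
There is a genuine gap in the choice of coefficient group: $\Gamma=\pi/\pi^{(2)}_r$ is metabelian, which is one level too shallow to produce \emph{first-order} signatures of $\bar{J}$. Because $\alpha,\beta\in\pi_1(M_R)^{(1)}$, the meridians of the infecting copies of $\bar{J}$ (the push-offs of $\alpha$ and $\beta$ in $M_J$) lie in $\pi_1(M_J)^{(1)}$, so $\phi_i(\mu_{\bar{J}})\in\Gamma^{(1)}$. Since the meridian normally generates $\pi_1(M_{\bar{J}})$, it follows that $\phi_i(\pi_1(M_{\bar{J}})^{(1)})\subset\Gamma^{(2)}=1$; that is, both $\phi_\alpha$ and $\phi_\beta$ factor all the way through the abelianization $\mathbb{Z}$. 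By Proposition~\ref{prop:rho invariants} each $\rho(M_{\bar{J}},\phi_i)$ is then $0$ or $\rho_0(\bar{J})$ --- zeroth-order, not first-order. (A nontrivial $\phi_i$ factoring through $\mathbb{Z}$ would correspond, in Definition~\ref{defn:highordersignatures}, to $P=\mathcal{A}_0(\bar{J})$, which never satisfies $P\subset P^\perp$ since the classical Blanchfield form of $\bar{J}$ is nonsingular.) Your case analysis therefore only recovers $\rho_0(\bar{J})=0$, which is D.~Cooper's classical vanishing and does not constrain the first-order signatures of $\bar{J}$.

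The fix requires a coefficient group of derived length $3$, and this cannot be cleanly extracted from $V$ alone: one must enlarge $V$. The paper glues the infection cobordism $E$ along $M_J$ and caps the $M_R$ boundary component of $E$ with the ribbon disk exterior $\mathcal{R}$ corresponding to the slice kernel $\langle\alpha\rangle$, obtaining $W=V\cup E\cup\mathcal{R}$ with $\partial W=M^\alpha_{\bar{J}}\coprod M^\beta_{\bar{J}}$, and then takes $\Gamma=\tilde{\pi}/\tilde{\pi}^{(3)}_r$ for $\tilde{\pi}=\pi_1(W)$. Now $j_*(\alpha)\in\tilde{\pi}^{(2)}_r$ while $j_*(\beta)\notin\tilde{\pi}^{(2)}_r$, so $\phi_\alpha$ still abelianizes (contributing $\rho_0(\bar{J})=0$) but $\phi_\beta$ is honestly metabelian; the vanishing of the signature defects of $V$, $E$, $\mathcal{R}$ gives $\rho(M^\alpha_{\bar{J}},\phi_\alpha)+\rho(M^\beta_{\bar{J}},\phi_\beta)=0$, forcing $\rho(M^\beta_{\bar{J}},\phi_\beta)=0$. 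Identifying this as a first-order signature --- showing the associated submodule of $\mathcal{A}_0(\bar{J})$ is isotropic --- is where the unlocalized higher-order Blanchfield duality (Theorem~\ref{thm:nontriviality}) enters, applied to $W$ as a $(2)$-bordism with $M^\beta_{\bar{J}}$ a boundary component, at the third stage of the derived series rather than to $V$ at the second. You anticipated the Blanchfield duality step correctly, but applied it one floor too low.
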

\begin{figure}[htbp]
\setlength{\unitlength}{1pt}
\begin{picture}(200,160)
\put(0,0){\includegraphics[height=150pt]{family.pdf}}
\put(-50,70){$J~=$}
\put(7,92){$\bar{J}$}
\put(122,92){$\bar{J}$}
\end{picture}
\caption{}\label{fig:genJ2}
\end{figure}

\begin{proof}[Proof of Theorem~\ref{thm:J2notslice}] Suppose $J_2(K)$ is slice. Since $J_1(K)$ is algebraically slice, we can apply Theorem~\ref{thm:generalJ2notslice} with $J=J_2(K)$ and $\bar{J}=J_1(K)$ to conclude that one of the first order signatures of $J_1(K)$ vanishes. In Example~\ref{ex:first-ordersigs} we saw that these signatures are $\{\rho_0(K),\rho_0(K), \rho^1(R)+2\rho_0(K)\}$. Thus either $\rho_0(K)=0$ or $\rho_0(K)=-\frac{1}{2}\rho^1(R)$.
\end{proof}

\begin{proof}[Proof of Theorem~\ref{thm:generalJ2notslice}] Suppose $J$ is slice and let $V$ denote the exterior of a slice disk. Thus $\partial V=M_{J}$. Let $\pi=\pi_1(V)$. Then $H_1(M_J)\cong H_1(V)\cong \pi/\pi^{(1)}\cong \mathbb{Z}$. Consider the coefficient system $\pi\to \pi/\pi^{(1)}\cong \mathbb{Z}$ and the inclusion-induced map:
\begin{equation}\label{eq:6.1}
j_*: H_1(M_J;\mathbb{Q}[t,t^{-1}])\to H_1(V;\mathbb{Q}[t,t^{-1}])
\end{equation}
which is merely the map on the classical rational Alexander modules. If $V$ is a slice disk exterior it is well known that the kernel $P_0$ of $j_*$ is self-annihilating with respect to the classical Blanchfield form on $J$, i.e. $P_0=P_0^\perp$. The knot $J$ has the same Blanchfield form as the ribbon knot $R=9_{46}$ so $P_0$ is either the submodule generated by $\alpha$ or the submodule generated by $\beta$. Because $J$ is symmetric, without loss of generality we assume $P_0=\<\alpha\>$. Furthermore, it is them known, by work of D. Cooper (unpublished) and ~\cite[Theorem 5.2]{COT2} that the zero$^{th}$ order signature of $\bar{J}$ vanishes, i.e. $\rho_0(\bar{J})=0$. By definition
$$
\pi^{(1)}/\pi_r^{(2)}=(\pi^{(1)}/[\pi^{(1)},\pi^{(1)}])/(\mathbb{Z}-\text{torsion}).
$$
(Note that since $\pi/\pi^{(1)}\cong \mathbb{Z}$, $\pi^{(1)}_r = \pi^{(1)}$.)
Thus there is a monomorphism
$$
i:\pi^{(1)}/\pi_r^{(2)}\hookrightarrow \pi^{(1)}/[\pi^{(1)},\pi^{(1)}])\otimes_\mathbb{Z}\mathbb{Q}.
$$
The latter has a strictly homological interpretation as the first homology with $\mathbb{Q}$ coefficients of the covering space of $V$ whose fundamental group is $\pi^{(1)}$. In other words
$$
\pi^{(1)}/[\pi^{(1)},\pi^{(1)}])\otimes_\mathbb{Z}\mathbb{Q}\cong H_1(V;\mathbb{Q}[\pi/\pi^{(1)}]).
$$
Therefore we have the following commutative diagram where $i$ is injective.
\begin{equation*}
\begin{CD}
\pi_1(M_J)^{(1)}      @>\equiv>>    \pi_1(M_J)^{(1)}  @>j_*>>   \pi^{(1)}  @>>>
\pi^{(1)}/\pi^{(2)}_r \\
  @VVV   @VVV        @VVV       @VViV\\
\mathcal{A}_0(J)     @>\cong>>  H_1(M_J;\mathbb{Q}[t,t^{-1}])    @>j_*>> H_1(V;\mathbb{Q}[t,t^{-1}]) @>\cong>>
  (\pi^{(1)}_r/[\pi^{(1)}_r,\pi^{(1)}_r])\otimes_\mathbb{Z} \mathbb{Q}\\
\end{CD}
\end{equation*}
Since the kernel of the bottom horizontal composition is $\<\alpha\>$, the kernel of the top horizontal composition is $\<\alpha\>$, and therefore it follows that
\begin{equation}\label{eq:6.2}
j_*(\alpha)\in \pi^{(2)}_r ~~\text{and} ~~j_*(\beta)\neq 1\in \pi^{(1)}/\pi^{(2)}_r.
\end{equation}

\noindent Since $J$ is obtained from $R$ by two infections along $\alpha$ and $\beta$, from Lemma~\ref{lem:mickeyfacts} there is a corresponding cobordism $E$ with $4$ boundary components $M_J$, $M_{R}$ and two copies of $M_{\bar{J}}$. $R$ has an obvious ribbon disk that corresponds to ``cutting the $\alpha$ band''. Let $\mathcal{R}$ denote the exterior in $B^4$ of this ribbon disk. Then $\partial \mathcal{R}=M_{R}$.
\begin{rem}\label{rem:Rfacts} The salient features of $\mathcal{R}$ are
\begin{itemize}
\item [1.] $\pi_1(M_{R})\to \pi_1(\mathcal{R})$ is an epimorphism whose kernel is generated by the normal closure of $\alpha$;
\item [2.] $H_1(M_{R})\to H_1(\mathcal{R})$ is an isomorphism;
\item [3.] $H_2(\mathcal{R})=0$.
\end{itemize}
\end{rem}
Construct a $4$-manifold called $W$ by first identifying $-E$ with $V$ along $M_{J}$ and then capping off the boundary component $M_{R}$ using $-\mathcal{R}$, as shown in Figure~\ref{fig:WforJ2}.
\begin{figure}[htbp]
\setlength{\unitlength}{1pt}
\begin{picture}(200,160)
\put(0,0){\includegraphics[height=150pt]{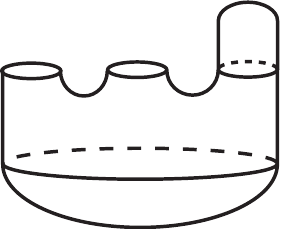}}
\put(82,120){$M^{\alpha}_{\bar{J}}$}
\put(15,119){$M^{\beta}_{\bar{J}}$}
\put(156,123){$\mathcal{R}$}
\put(190,103){$M_{R}$}
\put(190,39){$M_J$}
\put(88,12){$V$}
\put(88,69){$E$}
\end{picture}
\caption{The cobordism $W$}\label{fig:WforJ2}
\end{figure}
The boundary components of $W$ will be called $M^\alpha_{\bar{J}}$ and $M^\beta_{\bar{J}}$. Let $\tilde{\pi}=\pi_1(W)$ and $\G=\tilde{\pi}/\tilde{\pi}^{(3)}_r$. Denote the projection $\tilde{\pi}\to \G$ by $\phi$ and its restriction to the boundary components by $\phi_\alpha$ and $\phi_\beta$. We have
$$
\rho(M^{\alpha}_{\bar{J}},\phi_\alpha)+\rho(M^\beta_{\bar{J}},\phi_\beta)=\sigma^{(2)}_\G(W)-\sigma(W).
$$
By the additivity of both types of signature, the signature defect on the right-hand side is a sum of the signature defects of $V$, $\mathcal{R}$, and $E$. But these vanish; the first two by Theorem~\ref{thm:oldsliceobstr} and the last by Lemma~\ref{lem:mickeysig}. Thus
\begin{equation}\label{con1}
\rho(M^{\alpha}_{\bar{J}},\phi_\alpha)+\rho(M^\beta_{\bar{J}},\phi_\beta)=0.
\end{equation}

It follows from (\ref{eq:6.2}) that $j_*(\alpha)\in \tilde{\pi}^{(2)}_r.$ Note that $\pi_1(M^\alpha_{\bar{J}})$ is normally generated by the meridian which is isotopic in $E$ to a push-off of the curve $\alpha\subset M_J$ (see property 4 of \ref{lem:mickeyfacts}). Therefore, $j_*(\pi_1(M^\alpha_{\bar{J}}))\subset \tilde{\pi}^{(2)}_r $, and hence, $j_*(\pi_1(M^\alpha_{\bar{J}})^{(1)})\subset \tilde{\pi}^{(3)}_r$. Thus $\phi_\alpha$ factors through the abelianization of $\pi_1(M^\alpha_{\bar{J}})$. Hence by properties (2)-(4) of Proposition~\ref{prop:rho invariants}, $\rho(M^\alpha_{\bar{J}},\phi_\alpha)$ is either zero or $\rho_0(\bar{J})$. However, as remarked earlier, $\rho_0(\bar{J})=0$. Thus $\rho(M^{\alpha}_{\bar{J}},\phi_\alpha)=0$.

From (\ref{con1}), we now have $\rho(M^\beta_{\bar{J}},\phi_\beta)=0.$ This finishes the proof of Theorem~\ref{thm:generalJ2notslice} once we establish that this is indeed a first order signature of $\bar{J}$. Specifically, from Definition \ref{defn:highordersignatures} we must show that:
\begin{itemize}
\item [1.] $\phi_\beta$ factors through $G/G^{(2)}$, where $G=\pi_1(M^\beta_{\bar{J}})$;
\item [2.]$\ker{\phi_\beta}= \ker{(G^{(1)}\to G^{(1)}/G^{(2)}\to \mathcal{A}_0/P)}$
for some submodule $P$ of the Alexander module $\mathcal{A}_0(\bar{J})$ such that $P\subset P^\perp$ with respect to the classical Blanchfield form on $\bar{J}$.
\end{itemize}

Since $j_*(\beta)\in {\pi}^{(1)}$, it follows that $j_*(\beta)\in \tilde{\pi}^{(1)}$. Also $\pi_1(M^\beta_{\bar{J}})$ is normally generated by the meridian which is isotopic in $E$ to a push-off of the $\beta\subset M_J$ (see property 4 of \ref{lem:mickeyfacts}). Therefore,
\begin{equation}\label{con2}
j_*(\pi_1(M^\beta_{\bar{J}}))\subset \tilde{\pi}^{(1)}
\end{equation}
Hence, $j_*(\pi_1(M^\beta_{\bar{J}})^{(2)})\subset \tilde{\pi}^{(3)}_r$. This establishes property 1 above.

To show property 2, we begin by showing that the inclusion $V\to W$ induces an isomorphism
\begin{equation}\label{eq:6.3}
\pi/\pi^{(2)}_r\to \tilde{\pi}/\tilde{\pi}^{(2)}_r.
\end{equation}
The map $\pi_1(V)\to \pi_1(V\cup E)$ is a surjection whose kernel is the normal closure of the set of longitudes $\{\ell_\alpha,\ell_\beta\}$ of the copies of $S^3-\bar{J}\subset M_J$ (property $(1)$ of Lemma~\ref{lem:mickeyfacts}). These longitudes lie in the second derived subgroups of their respective knot groups. The groups $\pi_1(S^3-\bar{J})$ are normally generated by the meridians of the respective copy of $S^3-\bar{J}$. These meridians are identified to push-offs of the curves $\alpha$ and $\beta$ respectively in $M_{J}$ and we saw in ~(\ref{eq:6.2}) that $j_*(\alpha)\in \pi^{(2)}_r$ and $j_*(\beta)\in \pi^{(1)}$. Thus $j_*(\ell_\alpha)$ and $j_*(\ell_\beta)$ lie in $\pi^{(3)}_r$ and so the inclusion map $V\to V\cup E$ induces an isomorphism on $\pi_1$ modulo $\pi_1(-)^{(3)}_r$. Similarly the map $\pi_1(V\cup E)\to \pi_1(V\cup E\cup \mathcal{R})$  is a surjection whose kernel is the normal closure of the curve $\alpha\subset M_{R}$ (by property 1 of (\ref{rem:Rfacts})). But this curve $\alpha$ is isotopic in $E$ to a push-off of the curve $\alpha\subset M_J$ and
$j_*(\alpha)\in \pi^{(2)}_r$. Thus the inclusion $V\cup E\to W$ induces an isomorphism on $\pi_1$ modulo $\pi_1(-)^{(2)}_r$. Combining these two isomorphisms yields ~(\ref{eq:6.3}).

Combining (\ref{eq:6.2}) and (\ref{eq:6.3}), we have that $j_*(\beta) \neq 1\in \tilde{\pi}^{(1)}/\tilde{\pi}^{(2)}_r$. Since this group is torsion-free abelian, $j_*(\beta)$ generates an infinite cyclic subgroup of $\tilde{\pi}^{(1)}/\tilde{\pi}^{(2)}_r$. Therefore the inclusion of the meridian of $M^\beta_{\bar{J}}$ into $W$ is an element of infinite order in $\tilde{\pi}^{(1)}/\tilde{\pi}^{(2)}_r$. We claim that the kernel of $\phi_{\beta}:G \to \G=\tilde{\pi}/\tilde{\pi}^{(3)}_r$ is contained in $G^{(1)}$. For suppose $x \in \ker{\phi_{\beta}}$ and $x=\mu^{m}y$
 where $\mu$ is a meridian of $M^{\beta}_{\bar{J}}$ and $y \in G^{(1)}$. Since $x \in \ker{\phi_{\beta}}$, clearly $x$ is in the kernel of the composition $$\psi_{\beta}: G \overset{\phi_{\beta}}{\to} \tilde{\pi}/\tilde{\pi}^{(3)}_r \to \tilde{\pi}/\tilde{\pi}^{(2)}_r.$$ Moreover, by (\ref{con2}), $\phi_{\beta}(G^{(1)})\subset\tilde{\pi}_{r}^{(2)}$. Therefore $\mu^{m}$ is in the kernel of $\psi_{\beta}$, but this contradicts the fact that $\mu$ is an element of infinite order in $\tilde{\pi}^{(1)}/\tilde{\pi}^{(2)}_r$. Thus the kernel of $\phi_{\beta}:G \to \G$ is contained in $G^{(1)}$. It remains to describe $P$ and to show that $P\subset P^\perp$ with respect to the Blanchfield form on $\bar{J}$.

We consider the coefficient system $\psi:\tilde{\pi}\to \tilde{\pi}/\tilde{\pi}^{(2)}_r\equiv\Lambda$. By (\ref{con2}), $\psi$ restricted to $\pi_1(M^\beta_{\bar{J}})$ factors through the abelianization. Thus (as we shall discuss in more detail in Section~\ref{Blanchfieldforms})
$$
H_1(M^\beta_{\bar{J}};\mathbb{Q}\Lambda)\cong H_1(M^\beta_{\bar{J}};\mathbb{Q}[t,t^{-1}]) \otimes_{\mathbb{Q}[t,t^{-1}]}\mathbb{Q}\Lambda.
$$
Consider the composition:
\begin{equation}\label{eq:kernel}
\mathcal{A}_0(\bar{J})\overset{i}{\hookrightarrow}\mathcal{A}_0(\bar{J}) \otimes_{\mathbb{Q}[t,t^{-1}]}\mathbb{Q}\Lambda\overset{\cong}{\to}H_1(M^\beta_{\bar{J}};\mathbb{Q}\Lambda)\overset{j_*}\to H_1(W;\mathbb{Q}\Lambda),
\end{equation}
We claim that $i=id\otimes 1$ is a map of $\mathbb{Q}[t,t^{-1}]$-modules. Recall that $\mathcal{A}_0(\bar{J}) \otimes_{\mathbb{Q}[t,t^{-1}]}\mathbb{Q}\Lambda$ is a right $\mathbb{Q}\Lambda$-module and hence can be considered as a right $\mathbb{Q}[t,t^{-1}]$-module using the embedding $\mathbb{Z}\hookrightarrow \Lambda$ where, let's say, $t\to \mu$. If $x\in \mathcal{A}_0(\bar{J})$ and $p(t)\in \mathbb{Q}[t,t^{-1}]$ then:
$$
id\otimes 1(xp(t))=xp(t)\otimes 1=x\otimes (p(\mu)\cdot 1)=x\otimes (1\cdot p(\mu)))=(x\otimes 1)*p(t)=(id\otimes 1(x))*p(t).
$$
Thus $id\otimes 1$ is a right $\mathbb{Q}[t,t^{-1}]$-module map.
Define $P_1$ to be the kernel of the composition in ~\ref{eq:kernel}. We claim that $P_1$ is the submodule $P$ of the Alexander module referred to above.
Consider the following commutative diagram where $i$ is injective.
\begin{equation*}
\begin{CD}
\pi_1(M^\beta_{\bar{J}})^{(1)}      @>\equiv>>    \pi_1(M^\beta_{\bar{J}})^{(1)}  @>j_*>> \tilde{\pi}^{(2)}_r @>>>
\tilde{\pi}^{(2)}_r/\tilde{\pi}^{(3)}_r \\
  @VVV   @VVV        @VVV       @VViV\\
\mathcal{A}_0(\bar{J})     @>>>  H_1(M^\beta_{\bar{J}};\mathbb{Q}\Lambda)    @>j_*>> H_1(W;\mathbb{Q}\Lambda) @>\cong>>
  (\tilde{\pi}^{(2)}_r/[\tilde{\pi}^{(2)}_r,\tilde{\pi}^{(2)}_r])\otimes_\mathbb{Z} \mathbb{Q}\\
\end{CD}
\end{equation*}
Given an element in $P_1$, choose a representative of this element in $G^{(1)}=\pi_1(M^\beta_{\bar{J}})^{(1)}$. By the diagram above, since $i$ is injective, it follows that the representative of this element must map into $\tilde{\pi}^{(3)}_r$. In other words, the representative is in the kernel of $\phi_\beta$. This establishes that $\ker{\phi_\beta}= \ker{(G^{(1)}\to G^{(1)}/G^{(2)}\to \mathcal{A}_0/P_1)}$. Thus $P_{1}=P$.

Recall the classical fact (used above) that if $V$ is the slice disk complement for a slice knot $J$, then the kernel $P_0$ of $j_*:H_1(M_J;\mathbb{Q}[t,t^{-1}]) \to H_1(V;\mathbb{Q}[t,t^{-1}])$ is self-annihilating with respect to the classical Blanchfield form on $J$, i.e. $P_0=P_0^\perp$. We would like to extend this to show that the kernel $P_1$ satisfies $P_1\subset P_1^\perp$ with respect to the classical Blanchfield form on $\bar{J}$.

But $M^\beta_{\bar{J}}\to W$ is quite different than the classical situation. First, $M^\beta_{\bar{J}}$ is not the only boundary component of $W$. Secondly, the map $M^\beta_{\bar{J}}\to W$ is the zero map on $H_1(-)$! Thirdly, $\mathbb{Q}\Lambda$ is not a PID. Nonetheless, after defining a new category of cobordisms in Section~\ref{sec:bordism} we are able to prove the required facts using higher-order Blanchfield linking forms.

The desired result to finish the proof at hand is Theorem~\ref{thm:nontriviality} with $k=2$. The verification that $W$ is a $(2)$-bordism is straight-forward. We have not included all the details in this proof because we have not yet covered all the results necessary to prove the theorem, but thought it valuable to see the ideas and to provide motivation for the new category of cobordisms we define in the next section.

\end{proof}
%stop Connie's here

\section{n-Bordisms and Rational n-Bordisms} \label{sec:bordism}

Our proof makes essential use of a much weaker notion than the $(n)$-solvability of Cochran-Orr-Teichner. In this section we define this notion and establish its key properties.

Recall that \cite[Section 8]{COT} introduced a filtration of the concordance classes of knots $\mathcal{C}$
$$
\cdots \subseteq \mathcal{F}_{n} \subseteq \cdots \subseteq
\mathcal{F}_1\subseteq \mathcal{F}_{0.5} \subseteq \mathcal{F}_{0} \subseteq \mathcal{C}.
$$
where the elements of $\mathcal{F}_{n}$ and $\mathcal{F}_{n.5}$ are called \emph{$(n)$-solvable knots} and \emph{$(n.5)$-solvable knots} respectively. This is a filtration by \emph{subgroups} of the knot concordance group. A slice knot $K$ has the property that its zero surgery $M_K$ bounds a $4$-manifold $W$ (namely the exterior of the slicing disk) such that $H_1(M_K)\to H_1(W)$ is an isomorphism and $H_2(W)=0$. An \emph{$(n)$-solvable} knot is, loosely speaking, one such that $M_K$ bounds a $4$-manifold $W$ such that $H_1(M_K)\to H_1(W)$ is an isomorphism and the intersection form on $H_2(W)$ ``looks'' hyperbolic modulo the $n^{th}$-term of the derived series of $\pi_1(W)$. The manifold $W$ is called an \emph{$(n)$-solution for $\partial W$}. These notions are defined below in the context of our new notion.

We will define a weaker notion where the condition that $H_1(\partial W)\to H_1(W)$ be an isomorphism is dropped. Somewhat surprisingly the key results still hold for this much weaker notion. Of lesser importance, we also drop the condition on the connectivity of $\partial W$, which had already been done in ~\cite{CK} in restricted cases.

For a compact connected oriented topological 4-manifold $W$, let $W^{(n)}$ denote the covering
space of $W$ corresponding to the $n$-th derived subgroup of $\pi_1(W)$. The deck translation group of this cover is the solvable group $\pi_1(W)/\pi_1(W)^{(n)}$. Then $H_2(W^{(n)};\mathbb{Q})$ can be endowed with the structure of a right $\mathbb{Q}[\pi_1(W)/\pi_1(W)^{(n)}]$-module. This agrees with the homology group with twisted coefficients $H_2(W;\mathbb{Q}[\pi_1(W)/\pi_1(W)^{(n)}])$. There is an equivariant
intersection form
$$
\lambda_n : H_2(W^{(n)};\mathbb{Q}) \times H_2(W^{(n)};\mathbb{Q}) \lra
\mathbb{Q}[\pi_1(W)/\pi_1(W)^{(n)}]
$$
 \cite[Chapter 5]{Wa}\cite[Section 7]{COT}. The usual intersection form is the case $n=0$. In general, these
intersection forms are singular. Let $I_n \equiv$ image($j_* : H_2(\partial W^{(n)};\mathbb{Q}) \to H_2(W^{(n)};\mathbb{Q})$). Then this intersection form factors
through
$$
\ov{\lambda_n} : H_2(W^{(n)};\mathbb{Q})/I_n \times H_2(W^{(n)};\mathbb{Q})/I_n \lra \mathbb{Q}[\pi_1(W)/\pi_1(W)^{(n)}].
$$
We define a  \textbf{rational $\mathbf{(n)}$-Lagrangian}, $L$,  of $W$ to be a
submodule of $H_2(W;\mathbb{Q}[\pi_1(W)/\pi_1(W)^{(n)}])$ on which
$\lambda_n$ vanishes identically and which maps onto a $\frac12$-rank
subspace of $H_2(W;\mathbb{Q})/I_0$ under the covering map. An
\textbf{$\mathbf{(n)}$-surface} is a based and immersed surface
in $W$ that can be lifted to $W^{(n)}$. Observe that any class in
$H_2(W^{(n)})$ can be represented by (the lift of) an $(n)$-surface and that
$\lambda_n$ can be calculated by counting intersection points in
$W$ among representative $(n)$-surfaces weighted appropriately by
signs and by elements of $\pi_1(W)/\pi_1(W)^{(n)}$ (see \cite[Section 7]{COT}). We say a rational
$(n)$-Lagrangian $L$ admits \textbf{rational $\mathbf{(m)}$-duals} (for $m\le n$) if $L$
is generated by (lifts of) $(n)$-surfaces $\ell_1,\ell_2,\ldots,\ell_g$ and
there exist $(m)$-surfaces $d_1,d_2,\ldots, d_g$ such that $H_2(W;\mathbb{Q})/I_0$
has rank $2g$ and $\lambda_m(\ell_i,d_j)=\delta_{i,j}$.

If $W$ is a spin manifold then we can replace all the occurrences of $\mathbb{Q}$ above by $\Z$ and consider the equivariant intersection form $\lambda_n^{\mathbb{Z}}$ on $H_2(W^{(n)};\mathbb{Z})$ as well as the equivariant \emph{self-intersection form $\mu_n$}. This leads to the definitions of an \textbf{$\mathbf{(n)}$-Lagrangian} and \textbf{$\mathbf{(m)}$-duals} for $W$, where $\mu_n$ is required to vanish identically on an $(n)$-Lagrangian and we also require that the $(n)$-Lagrangian maps onto a $\frac12$-rank \emph{summand} of $H_2(W;\mathbb{Z})/I_0$. In the presence of $(n)$-duals, this forces the usual intersection form on $H_2(W;\mathbb{Z})/I_0$ to be hyperbolic \cite[Remark 7.6]{COT}. All of the above notions were first defined in ~\cite{COT}. In ~\cite{CK} this was extended to the case that $\partial W$ is disconnected but required each boundary component $M_i$ to satisfy $H_1(M_i)\cong H_1(W)\cong \mathbb{Z}$.

A crucial part of the definition is the ``size'' (cardinality) of a rational $(n)$-Lagrangian, which is dictated by the rank of $H_2(W;\mathbb{Q})/I_0$. Under the assumption that
$$
H_1(\partial W;\mathbb{Q})\to H_1(W;\mathbb{Q})
$$
is an isomorphism, it follows that the dual map
$$
H_3(W,M;\mathbb{Q})\to H_2(\partial W;\mathbb{Q})
$$
is an isomorphism and hence that $I_0=0$. Thus in this special case (which was the case treated in (~\cite{COT}, ~\cite{CK}) and ~\cite{Ha2}), the size of rational $(n)$-solutions is dictated merely by the rank of $H_2(W;\mathbb{Q})$. This assumption will not hold in our applications. We need the more general situation.

\begin{defn}
\label{defn:rationalnbordism} Let $n$ be a nonnegative integer. A compact, connected oriented topological 4-manifold $W$ with $\partial W = M$ is a \textbf{rational $\mathbf{(n)}$-bordism} for $M$ if $W$ admits a rational $(n)$-Lagrangian with rational $(n)$-duals. Then we say that $M$ is \textbf{rationally $\mathbf{(n)}$-bordant} via $W$ and that $W$ is a \textbf{rational $\mathbf{(n)}$-bordism} for $M$. If $W$ is spin then we say that $W$ is an \textbf{$\mathbf{(n)}$-bordism} for $M$ if $W$ admits an $(n)$-Lagrangian with $(n)$-duals. Then we say that $M$ is \textbf{$\mathbf{(n)}$-bordant} via $W$ and that $W$ is an \textbf{$\mathbf{(n)}$-bordism} for $M$.
\end{defn}

\begin{defn}
\label{defn:n.5bordism} Let $n$ be a nonnegative integer. A compact, connected oriented 4-manifold $W$ with
$\partial W = M$ is a \textbf{rational $\mathbf{(n.5)}$-bordism} for $M$ if $W$ admits a rational $(n+1)$-Lagrangian with rational $(n)$-duals. Then we say that $M$ is \textbf{rationally $\mathbf{(n.5)}$-bordant} via $W$. If $W$ is spin then we say that $W$ is an \textbf{$\mathbf{(n.5)}$-bordism} for $M$ if $W$ admits an $(n+1)$-Lagrangian with $(n)$-duals. Then we say that $M$ is \textbf{$\mathbf{(n.5)}$-bordant} via $W$ and that $W$ is an \textbf{$\mathbf{(n.5)}$-bordism} for $M$.
\end{defn}

We recover Cochran-Orr-Teichner's notion of solvability by imposing the following additional restrictions.

\begin{defn}
\label{defn:nsolution}~\cite[Section 8]{COT} A $4$-manifold $W$ is an \textbf{$\mathbf{(n)}$-solution} (respectively an \textbf{$\mathbf{(n.5)}$-solution}) for $\partial W=M$ and $M$ is called \textbf{$\mathbf{(n)}$-solvable} (respectively \textbf{$\mathbf{(n.5)}$-solvable}) if
\begin{itemize}
\item [1.] $W$ is an \textbf{$\mathbf{(n)}$-bordism} (respectively an \textbf{$\mathbf{(n.5)}$-bordism}),
\item [2.] $\partial W$ is connected and non-empty, and
\item [3.] $H_1(\partial W;\mathbb{Z})\to H_1(W;\mathbb{Z})$ is an isomorphism.
\end{itemize}
\end{defn}
There is an analogous definition for  \textbf{rationally $\mathbf{(n)}$-solvable} (respectively \textbf{rationally $\mathbf{(n.5)}$-solvable}).

\begin{defn} For $h$ a non-negative integer or half-integer, a knot or link is called \textbf{$\mathbf{(h)}$-bordant} (respectively \textbf{rationally $\mathbf{(h)}$-bordant}, \textbf{$\mathbf{(h)}$-solvable}, \textbf{rationally $\mathbf{(h)}$-solvable}) if its zero surgery manifold $M_K$ admits an $(h)$-bordism (respectively a rational $(h)$-bordism, an $(h)$-solution, a rational $(h)$-solution).
\end{defn}
\begin{rem}
\label{rem:n-solvable}
\begin{enumerate}
\item Any $(h)$-bordism is  a rational $(h)$-bordism.
\item Any $(h)$-solution is  a rational $(h)$-solution.
\item Any $(h)$-solution is  an $(h)$-bordism.
\item Any rational $(h)$-solution is  a rational $(h)$-bordism.
\item Any $(n)$-bordism (respectively rational $(n)$-bordism) is  an $(m)$-bordism (respectively rational $(m)$-bordism) for any $m<n$.
\item If $L$ is slice in a topological (rational) homology $4$-ball then the complement of a set of slice disks is a (rational) $(n)$-solution for any integer or half-integer $n$.  This follows since $H_2(W;\mathbb{Z})=0$, and therefore the Lagrangian may be taken to be the zero submodule.
\end{enumerate}
\end{rem}

\begin{rem}\label{rem:0solv} One can see that any knot is is rationally 0-solvable as follows. Since $\Omega_3(S^1)=0$, $M_K$ is the boundary of some smooth $4$-manifold $W$ with $\pi_1(W)\cong \mathbb{Z}$ generated by the meridian (after surgery). The signature of $W$ can be assumed to be zero by connect-summing with copies of $\pm \mathbb{C}P(2)$. One can see that any Arf invariant zero knot is 0-solvable in the topological category by the same argument, using the fact that $\Omega_3^{Spin}(S^1)\cong \mathbb{Z}_2$ as detected by the Arf invariant and connect-summing with copies of Freedman's $\pm E_8$ manifold. For the argument in the smooth category see the explicit construction in ~\cite[Section 5]{COT2}.
\end{rem}

Certain $\rho$-invariants obstruct solvability.

\begin{thm}\label{thm:sliceobstr}(Cochran-Orr-Teichner~\cite[Theorem 4.2]{COT}) If a knot $K$ is rationally $(n.5)$-solvable via $W$ and  $\phi:\pi_1(M_K)\to \G$ is a PTFA coefficient system that extends to $\pi_1(W)$ and such that $\G^{(n+1)}=1$, then $\rho(M_K,\phi)=0$.
\end{thm}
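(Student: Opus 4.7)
The plan is to imitate the original proof of \cite[Theorem 4.2]{COT}, adapted to the weaker setting of rational $(n.5)$-solvability. Since $\partial W = M_K$ and $\phi$ extends to $\psi : \pi_1(W) \to \G$, the bordism formula $\rho(M_K,\phi) = \sigma^{(2)}_\G(W,\psi) - \sigma(W)$ reduces the problem to showing that both signature terms vanish. By Proposition~\ref{prop:rho invariants}(1), for $\sigma^{(2)}_\G(W,\psi) = 0$ it suffices to exhibit a half-rank summand of $H_2(W;\mathcal{K}\G)/j_*(H_2(\partial W;\mathcal{K}\G))$ on which the equivariant intersection form $\lambda_\G$ vanishes identically, and analogously for the ordinary form on $H_2(W;\mathbb{Q})/I_0$ to make $\sigma(W) = 0$.

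By rational $(n.5)$-solvability, $W$ carries a rational $(n+1)$-Lagrangian $L$ generated by $(n+1)$-surfaces $\ell_1,\dots,\ell_g$, together with rational $(n)$-duals $d_1,\dots,d_g$, with $\dim_\mathbb{Q} H_2(W;\mathbb{Q})/I_0 = 2g$. For the ordinary signature, the augmentation $\mathbb{Q}[\pi_1(W)/\pi_1(W)^{(n+1)}] \to \mathbb{Q}$ sends $\lambda_{n+1}$ to $\lambda_0$; since $\lambda_{n+1}$ vanishes on $L$, the image of $L$ in $H_2(W;\mathbb{Q})/I_0$ is a rank-$g$ isotropic subspace. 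The intersection form on $H_2(W;\mathbb{Q})/I_0$ is non-singular by Poincar\'e--Lefschetz duality, so having a half-rank isotropic subspace forces $\sigma(W) = 0$.

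For the $L^{(2)}$-signature, I use that $\G^{(n+1)} = 1$ forces $\psi : \pi_1(W) \to \G$ to factor through $\pi_1(W)/\pi_1(W)^{(n+1)}$. Hence $\lambda_\G$ is obtained from $\lambda_{n+1}$ by post-composing with the induced ring homomorphism
$$
\mathbb{Q}[\pi_1(W)/\pi_1(W)^{(n+1)}] \longrightarrow \mathbb{Q}\G \hookrightarrow \mathcal{K}\G.
$$
Since $\lambda_{n+1}$ vanishes on $L$, the classes $[\ell_i]$, viewed in $H_2(W;\mathcal{K}\G)$, have pairwise zero $\lambda_\G$-pairings.

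The remaining technical step --- and the one I expect to be the main obstacle --- is to verify that the images of $\{[\ell_i]\}$ descend to a rank-$g$ $\mathcal{K}\G$-direct summand of the quotient $H_2(W;\mathcal{K}\G)/j_*(H_2(\partial W;\mathcal{K}\G))$, which has $\mathcal{K}\G$-rank $2g$. Here I would invoke the $(n)$-duals $d_j$ and a universal coefficient/Euler characteristic argument in the spirit of \cite[\S 4]{COT}: using condition (3) of Definition~\ref{defn:nsolution} (so $H_*(\partial W;\mathcal{K}\G)$ is computed from $H_*(M_K;\mathcal{K}\G)$, which vanishes for $*\ge 2$ unless $\psi|_{\pi_1(M_K)}$ is trivial), one computes that the $\mathcal{K}\G$-rank of $H_2(W;\mathcal{K}\G)$ modulo the boundary image equals $\dim_\mathbb{Q} H_2(W;\mathbb{Q})/I_0 = 2g$. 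The $(n)$-duals then pair non-trivially with the $\ell_i$ after passing to $\mathcal{K}\G$, exhibiting $\{[\ell_i]\}$ and $\{[d_j]\}$ as generating complementary rank-$g$ summands; in particular $\{[\ell_i]\}$ is a half-rank summand, completing the argument. The rest of the proof is a careful bookkeeping exercise that follows \cite{COT} verbatim, the only subtlety being the replacement of integral coefficients by rational ones in the Lagrangian, which is harmless because $\mathcal{K}\G$ is already a $\mathbb{Q}$-algebra.
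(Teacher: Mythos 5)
The statement is cited verbatim from \cite[Theorem 4.2]{COT}, so the paper supplies no proof of its own; the closest thing it offers is Theorem~\ref{thm:sigvanishes} together with Lemma~\ref{lem:exact}, which carry out essentially the strategy you describe, but for a rational \emph{$(n+1)$-bordism}, i.e.\ under the \emph{stronger} hypothesis of an $(n+1)$-Lagrangian with $(n+1)$-duals. A rational $(n.5)$-solution gives only $(n)$-duals, and this discrepancy is exactly where your sketch breaks down.

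Your treatment of $\sigma(W)$ is fine, and so is the observation that $\G^{(n+1)}=1$ lets you push $\lambda_{n+1}$ through $\mathbb{Q}[\pi_1(W)/\pi_1(W)^{(n+1)}]\to\mathbb{Q}\G\hookrightarrow\mathcal{K}\G$ and conclude that the images $\ell_i'$ are pairwise $\lambda_\G$-orthogonal. The gap is in the step you flag as ``the main obstacle.'' The $(n)$-duals $d_j$ are, by definition, surfaces that lift only to $W^{(n)}$, so they carry classes in $H_2\bigl(W;\mathbb{Q}[\pi_1(W)/\pi_1(W)^{(n)}]\bigr)$. But the coefficient system $\psi$ factors through $\pi_1(W)/\pi_1(W)^{(n+1)}$ and, since $\G^{(n)}$ is typically nontrivial, does \emph{not} factor through $\pi_1(W)/\pi_1(W)^{(n)}$. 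Consequently there is no $\mathbb{Z}[\pi_1(W)]$-module map $\mathbb{Q}[\pi_1(W)/\pi_1(W)^{(n)}]\to\mathcal{K}\G$, the surfaces $d_j$ do not lift to the $\G$-cover $W_\G$, and they simply do not define classes in $H_2(W;\mathcal{K}\G)$. Your sentence ``the $(n)$-duals then pair non-trivially with the $\ell_i$ after passing to $\mathcal{K}\G$'' therefore has no meaning: the relation $\lambda_n(\ell_i,d_j)=\delta_{ij}$ lives over $\mathbb{Q}[\pi_1(W)/\pi_1(W)^{(n)}]$ and there is no ring map along which it can be transported to $\mathcal{K}\G$. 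For the same reason, the Euler-characteristic computation alone yields only the upper bound $\operatorname{rank}_{\mathcal{K}\G}H_2(W;\mathcal{K}\G)\le 2g$ (this is exactly the first half of Lemma~\ref{lem:exact}); the equality and the half-rank claim for $\{\ell_i'\}$ both require duals that survive to $\mathcal{K}\G$-coefficients, which is precisely why Lemma~\ref{lem:exact} demands $(k)$-duals with $\Lambda^{(k)}=1$, i.e.\ $(n+1)$-duals in this setting. So your ``bookkeeping exercise that follows \cite{COT} verbatim'' cannot actually be carried out as outlined: the genuine content of \cite[Theorem 4.2]{COT} is a subtler argument that bridges the gap between $(n)$-duals and the $\G^{(n+1)}=1$ coefficient system, and that argument is missing from your proposal.
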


\begin{prop}\label{prop:firstordervanish} If $K$ is topologically slice in a rational homology $4$-ball (or more generally if $K$ is rationally $(1.5)$-solvable) then one of the first-order signatures of $K$ is zero.
\end{prop}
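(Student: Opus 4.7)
The plan is to exhibit a specific first-order signature of $K$ and show it vanishes via Theorem~\ref{thm:sliceobstr}. Let $W$ be a rational $(1.5)$-solution for $M_K$, set $\pi=\pi_1(W)$, and take $\Gamma=\pi/\pi^{(2)}_r$. Then $\Gamma$ is PTFA and $\Gamma^{(2)}=1$ (the latter since $\pi^{(2)}\subset\pi^{(2)}_r$). The inclusion $M_K\hookrightarrow W$ composed with the projection onto $\Gamma$ gives a coefficient system $\phi_K\colon G=\pi_1(M_K)\to\Gamma$ which by construction extends over $\pi_1(W)$, so Theorem~\ref{thm:sliceobstr} applied with $n=1$ yields $\rho(M_K,\phi_K)=0$.

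The next step is to verify that $\phi_K$ is actually a first-order signature in the sense of Definition~\ref{defn:highordersignatures}. Condition (1) is immediate since $\Gamma^{(2)}=1$ forces $\phi_K(G^{(2)})=1$. For condition (2), take $P$ to be the kernel of the natural map $j_*\colon\mathcal{A}_0(K)\to H_1(W;\mathbb{Q}[t,t^{-1}])$; equivalently the kernel of the induced map $\mathcal{A}_0(K)\to(\pi^{(1)}/\pi^{(2)}_r)\otimes\mathbb{Q}$, the two descriptions agreeing because $\pi^{(2)}_r/\pi^{(2)}$ is $\mathbb{Z}$-torsion. Since $\Gamma^{(1)}=\pi^{(1)}/\pi^{(2)}_r$ is torsion-free abelian, an element $x\in G^{(1)}$ lies in $\ker\phi_K$ exactly when its image in $\mathcal{A}_0(K)$ lies in $P$. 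This identifies $\ker\phi_K\cap G^{(1)}$ with $\ker(G^{(1)}\to G^{(1)}/G^{(2)}\to\mathcal{A}_0/P)$, as required.

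The hard part is proving $P\subset P^\perp$ with respect to the classical Blanchfield form $\mathcal{B}\ell_0$. When $W$ is the complement of a slice disk in a rational homology ball, $H_2(W;\mathbb{Q}[t,t^{-1}])=0$ and the standard argument combining the long exact sequence of $(W,M_K)$ with Poincar\'e--Lefschetz duality gives the stronger statement $P=P^\perp$. In the rational $(1.5)$-solvable case $H_2(W;\mathbb{Q}[t,t^{-1}])$ need not vanish, but the rational $(2)$-Lagrangian with rational $(1)$-duals provides an adequate substitute. The plan is to express $\mathcal{B}\ell_0(p,p')$ for $p,p'\in P$ as the image, under the connecting homomorphism $H_2(W,M_K;\mathbb{Q}(t)/\mathbb{Q}[t,t^{-1}])\to H_1(M_K;\mathbb{Q}(t)/\mathbb{Q}[t,t^{-1}])$, of an equivariant intersection pairing on $H_2(W;\mathbb{Q}[t,t^{-1}])$, and then exploit the vanishing of this pairing on the rational $(2)$-Lagrangian to conclude $\mathcal{B}\ell_0(p,p')=0$. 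This is the $k=1$ special case of the general higher-order Blanchfield duality theorem (Theorem~\ref{thm:nontriviality}) developed later in the paper; that is where all the real work lies.
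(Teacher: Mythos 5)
Your proof follows the paper's own argument step for step: take $\Gamma=\pi/\pi^{(2)}_r$, apply Theorem~\ref{thm:sliceobstr} to get $\rho(M_K,\phi)=0$, let $P$ be the kernel of $\mathcal{A}_0(K)\to H_1(W;\mathbb{Q}[t,t^{-1}])$, and defer $P\subset P^\perp$ to Theorem~\ref{thm:nontriviality} (the paper cites COT Theorem 4.4 and this same theorem). One small gap to patch: you only establish $\ker\phi_K\cap G^{(1)}=\ker(G^{(1)}\to\mathcal{A}_0/P)$, but condition (2) of Definition~\ref{defn:highordersignatures} requires equality of the full kernel, so you must also show $\ker\phi_K\subset G^{(1)}$; this holds because the rational $(1.5)$-solution gives an isomorphism $H_1(M_K;\mathbb{Q})\to H_1(W;\mathbb{Q})$, so $\phi_K$ sends the meridian to an infinite-order element of $\Gamma/\Gamma^{(1)}\cong\mathbb{Z}$ — exactly the argument in the proof of Theorem~\ref{thm:generalJ2notslice}, to which the paper itself defers for these details.
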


\begin{proof}[Proof of Proposition~\ref{prop:firstordervanish}] Let $V$ be a rational $(1.5)$-solution for $M_K$, $G=\pi_1(M_K)$, $\pi=\pi_1(V)$ and $\phi:\pi\to \pi/\pi^{(2)}_r$. By  ~\cite[Theorem 4.2]{COT} $\rho(M_K,\phi)=0$. Clearly the restriction of $\phi$ to $G$ factors through $G/G^{(2)}$. Now, by ~\cite[Theorem 4.4]{COT} (see also our Theorem~\ref{thm:nontriviality}), if $P$ denotes the kernel of the map
$$
\mathcal{A}_0(K)\overset{i_*}{\to} H_1(M_K;\mathbb{Q}[\pi/\pi^{(1)}_r])\overset{j_*}\to H_1(V;\mathbb{Q}[\pi/\pi^{(1)}_r]),
$$
then $P\subset P^\perp$ with respect to the classical Blanchfield form of $K$. If $V$ is the exterior of a slice disk in a homology $4$-ball, this is merely the classical result that $P=P^\perp$. It follows that $\rho(M_K,\phi)$ is one of the first-order signatures of $K$. The details in verifying this final claim are entirely similar to those in the proof of Theorem~\ref{thm:generalJ2notslice}.
\end{proof}

There is an extension of Theorem~\ref{thm:sliceobstr} to the much broader category of $(n.5)$-null bordisms, but we shall not need it in this paper. However, a slightly weaker result will follow readily from results that we will need.

\begin{thm}\label{thm:sigvanishes} Suppose $W$ is a rational $(n+1)$-bordism and
$\phi:\pi_1(W)\ra\G$ is a non-trivial coefficient system where
$\G$ is a PTFA group with $\G^{(n+1)}=1$. Suppose for each component $M_i$ of $\partial W$ for which $\phi$ restricted to $\pi_1(M_i)$ is nontrivial, that $\text{rank}_{\mathbb{Z}\Lambda}H_1(M_i;\mathbb{Z}\G)=\beta_1(M_i)-1$. Then
$$
\rho(\partial W,\phi)= 0.
$$
\end{thm}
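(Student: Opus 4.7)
The plan is to compute $\rho(\partial W,\phi)$ via the defect formula $\rho(\partial W,\phi)=\sigma^{(2)}_\G(W,\phi)-\sigma(W)$ and show that each signature vanishes by exhibiting a half-rank isotropic summand in the relevant intersection form. In both cases the summand will come from the rational $(n+1)$-Lagrangian $L=\langle \ell_1,\ldots,\ell_g\rangle$ together with the rational $(n+1)$-duals $d_1,\ldots,d_g$ supplied by the $(n+1)$-bordism hypothesis.

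For the ordinary signature, apply the augmentation $\mathbb{Q}[\pi/\pi^{(n+1)}]\twoheadrightarrow\mathbb{Q}$ to the defining identities $\lambda_{n+1}(\ell_i,\ell_j)=0$ and $\lambda_{n+1}(\ell_i,d_j)=\delta_{ij}$ to recover the same relations for the ordinary form $\lambda_0$. Since $I_0=\mathrm{image}(H_2(\partial W;\mathbb{Q})\to H_2(W;\mathbb{Q}))$ is the radical of $\lambda_0$ by Poincar\'e-Lefschetz duality, the induced form on $H_2(W;\mathbb{Q})/I_0$ is hyperbolic of rank $2g$, so $\sigma(W)=0$. For the $L^{(2)}$-signature, since $\G^{(n+1)}=1$ the coefficient system $\phi$ factors through $\pi/\pi^{(n+1)}$, so each $(n+1)$-surface $\ell_i,d_j$, being liftable to $W^{(n+1)}$, descends to a well-defined class in $H_2(W;\mathcal{K}\G)$; by naturality of the equivariant intersection form under the coefficient change $\mathbb{Q}[\pi/\pi^{(n+1)}]\to\mathcal{K}\G$, we still have $\lambda_\G(\ell_i,\ell_j)=0$ and $\lambda_\G(\ell_i,d_j)=\delta_{ij}$. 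Poincar\'e-Lefschetz with $\mathcal{K}\G$-coefficients shows that the radical of $\lambda_\G$ on $H_2(W;\mathcal{K}\G)$ is exactly $j_*^\G H_2(\partial W;\mathcal{K}\G)$, so the images of the $\ell_i,d_j$ exhibit a rank-$2g$ hyperbolic summand of the non-singular form on the quotient. By Proposition~\ref{prop:rho invariants}(1), once we verify that $\dim_{\mathcal{K}\G}(H_2(W;\mathcal{K}\G)/j_*^\G)=2g$, it follows that $\sigma^{(2)}_\G(W,\phi)=0$.

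The main obstacle is this last rank equality, which is the only place the boundary hypothesis enters. Since $\mathcal{K}\G$ is a skew field and $W$ is a finite CW complex, $\chi(W)=\sum(-1)^ib_i^\G(W)$ agrees with the ordinary Euler characteristic, where $b_i^\G$ denotes $\dim_{\mathcal{K}\G}H_i(\cdot\,;\mathcal{K}\G)$. A nontrivial PTFA group is infinite, so $b_0^\G(W)=0$, and on each $M_i$ with $\phi|_{M_i}$ nontrivial one has $b_0^\G(M_i)=0=b_3^\G(M_i)$ while Poincar\'e duality and the hypothesis give $b_2^\G(M_i)=b_1^\G(M_i)=\beta_1(M_i)-1$. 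Components on which $\phi$ restricts trivially contribute $\rho(M_i,\phi|_{M_i})=0$ by Property~(3) of Proposition~\ref{prop:rho invariants} and have all twisted Betti numbers agreeing with ordinary ones. Assembling these counts in the long exact sequence of $(W,\partial W)$ over both $\mathbb{Q}$ and $\mathcal{K}\G$, together with the Poincar\'e-Lefschetz isomorphism $H_k(W,\partial W;\mathcal{K}\G)\cong\overline{H^{4-k}(W;\mathcal{K}\G)}$, forces $b_2^\G(W)-\mathrm{rank}(j_*^\G)=\beta_2(W)-\mathrm{rank}(j_*)=2g$. In the special case of $(n.5)$-solutions treated in~\cite[Thm.~4.2]{COT}, where $\partial W$ is connected and $H_1(\partial W;\mathbb{Z})\cong H_1(W;\mathbb{Z})\cong\mathbb{Z}$, one has $H_*(\partial W;\mathcal{K}\G)=0$ in positive degrees and $j_*^\G=0$, making the matching automatic; the ``deficit-one'' hypothesis $\mathrm{rank}_{\mathbb{Z}\G}H_1(M_i;\mathbb{Z}\G)=\beta_1(M_i)-1$ on each boundary component is precisely what is needed to preserve the matching in our more general setting.
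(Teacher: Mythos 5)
Your proposal takes essentially the same approach as the paper's proof, which factors through the paper's Lemma~\ref{lem:exact}: project the rational $(n+1)$-Lagrangian and duals to see $\sigma(W)=0$, push them forward into $H_2(W;\mathcal{K}\G)$ to obtain a half-rank Witt-trivial summand, and then certify the rank match between $H_2(W;\mathbb{Q})/I_0$ and $H_2(W;\mathcal{K}\G)/\tilde I$ via an Euler-characteristic plus Poincar\'e--Lefschetz bookkeeping that invokes the boundary hypothesis.

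One imprecision worth flagging: you write that the long exact sequence comparison ``forces'' $b_2^\G(W)-\mathrm{rank}(j_*^\G)=\beta_2(W)-\mathrm{rank}(j_*)=2g$. The Euler-characteristic computation by itself does not force equality. What it gives (cf.\ the paper's Lemma~\ref{lem:exact}) is
$$
\mathrm{rank}_{\mathcal{K}\G}\bigl(H_2(W;\mathcal{K}\G)/\tilde I\bigr)-2g
=2\bigl(b_1^\G(W)-\beta_1(W)+1\bigr)+\sum_i\bigl[(\beta_1(M_i)-b_1^\G(M_i))-(\beta_0(M_i)-b_0^\G(M_i))\bigr],
$$
and while your boundary hypotheses zero out the second group of terms, the first term is only bounded: $b_1^\G(W)\le\beta_1(W)-1$ by {\cite[Prop.\ 2.11]{COT}}, so the Euler-characteristic count yields $\mathrm{rank}_{\mathcal{K}\G}(H_2(W;\mathcal{K}\G)/\tilde I)\le 2g$, not equality. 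The reverse inequality $\ge 2g$ is supplied exactly by the nondegenerate $2g\times 2g$ hyperbolic block on the images of the $\ell_i,d_j$ (the block $\left(\begin{smallmatrix}0&I\\ I&X\end{smallmatrix}\right)$ is invertible, so those classes are linearly independent in the quotient). You actually have both ingredients in hand in your second paragraph, so the proof is salvageable as written; just make explicit that the equality comes from combining the $\le$ from the rank count with the $\ge$ from the hyperbolic summand, rather than from the rank count alone.
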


For the proof we need the following technical result that will also be crucial in Section \ref{Blanchfieldforms}. Recall our notation $\mathcal{K}\Lambda$ for the (skew) quotient field of fractions of $\mathbb{Z}\Lambda$. An \emph{Ore localization} of an Ore domain $\mathbb{Z}\Lambda$ is $\mathcal{R}=\mathbb{Z}\Lambda[S^{-1}]$ for some right-Ore set $S$ ~\cite{Ste}.

\begin{lem}\label{lem:exact} Suppose $W$ is a rational $(k)$-bordism and
$\phi:\pi_1(W)\ra\Lambda$ is a non-trivial coefficient system where
$\Lambda$ is a PTFA group with $\Lambda^{(k)}=1$. Let $\mathcal{R}$ be an Ore localization of $\mathbb{Z}\Lambda$ so $\mathbb{Z}\Lambda\subset\mathcal{R}\subset \mathcal{K}\Lambda$. Suppose for each component $M_i$ of $\partial W$ for which $\phi$ restricted to $\pi_1(M_i)$ is nontrivial, that $\text{rank}_{\mathbb{Z}\Lambda}H_1(M_i;\mathbb{Z}\Lambda)=\beta_1(M_i)-1$. Then
\begin{itemize}
\item [1.] The $\mathbb{Q}$-rank of $(H_2(W)/j_*(H_2(\partial W))$ is equal to the $\mathcal{K}\Lambda$-rank of $H_2(W;\mathcal{R})/I$ where
$$
I=\text{image}(j_*(H_2(\partial W;\mathcal{R})\to H_2(W;\mathcal{R}))).
$$
and
\item [2.]
$$
TH_2(W,\partial W;\mathcal{R})\xrightarrow{\partial}TH_1(\partial W;\mathcal{R})\xrightarrow{j_{\ast}} TH_1(W;\mathcal{R})
$$
is exact, where $T\mathcal{M}$ denotes the $\mathcal{R}$-torsion submodule of the $\mathcal{R}$-module $\mathcal{M}$.
\end{itemize}
\end{lem}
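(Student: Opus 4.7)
My plan for Part (1) is to run Poincar\'e--Lefschetz duality, the universal coefficient theorem, and the long exact sequence of the pair $(W,\partial W)$ in parallel over $\mathbb{Q}$ and over $\mathcal{K}\Lambda$. Since $\mathcal{R}$ is an Ore localization with $\mathcal{R}\subset \mathcal{K}\Lambda$, $\mathcal{R}$-ranks coincide with $\mathcal{K}\Lambda$-ranks, so I may work over the skew field $\mathcal{K}\Lambda$ throughout. PLD gives $H_i(W,\partial W;\mathcal{K}\Lambda)\cong H^{4-i}(W;\mathcal{K}\Lambda)$, and UCT over a skew field identifies this with the $\mathcal{K}\Lambda$-dual of $H_{4-i}(W;\mathcal{K}\Lambda)$, preserving ranks. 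A diagram chase in the LES, using $H_0(W;\mathcal{K}\Lambda)=0$ (since $\phi$ is nontrivial), expresses $\operatorname{rank}_{\mathcal{K}\Lambda}\bigl(H_2(W;\mathcal{R})/I\bigr)$ as an explicit alternating sum of the $\mathcal{K}\Lambda$-ranks of $H_{\ast}(W)$ and $H_{\ast}(\partial W)$. The same computation with $\mathbb{Q}$-coefficients yields the analogous expression for $\operatorname{rank}_{\mathbb{Q}}\bigl(H_2(W;\mathbb{Q})/I_0\bigr)$. The boundary ranks over $\mathcal{K}\Lambda$ are then pinned down using the hypothesis $\operatorname{rank}_{\mathbb{Z}\Lambda}H_1(M_i;\mathbb{Z}\Lambda)=\beta_1(M_i)-1$ for nontrivial-$\phi$ components, combined with PLD on each closed $3$-manifold $M_i$; for trivial-$\phi$ components one has $H_{\ast}(M_i;\mathbb{Z}\Lambda)=H_{\ast}(M_i;\mathbb{Z})\otimes_{\mathbb{Z}}\mathbb{Z}\Lambda$ directly. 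Finally the Euler characteristic identity $\chi(W)=\sum(-1)^i\operatorname{rank}_{\mathcal{K}\Lambda}H_i(W;\mathcal{K}\Lambda)=\sum(-1)^i\beta_i(W)$ provides the remaining relation needed to match the two expressions.

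The main obstacle in Part (1) is the residual $\operatorname{rank}_{\mathcal{K}\Lambda}H_1(W;\mathcal{K}\Lambda)$ term, whose agreement with $\beta_1(W)-1$ is precisely what makes the two rank formulae coincide. This is the standard dimension-shift arising from the fact that nontriviality of $\phi$ kills $H_0(W;\mathcal{K}\Lambda)$ while $H_0(W;\mathbb{Q})=\mathbb{Q}$; it follows from the usual chain-level argument for PTFA coefficient systems over an Ore domain. Assembling these ingredients yields the desired equality of ranks.

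For Part (2), the composition $j_{\ast}\circ\partial$ vanishes identically (being the restriction to torsion of the composition in the LES), so only exactness at $TH_1(\partial W;\mathcal{R})$ requires work. Given $x\in TH_1(\partial W;\mathcal{R})$ with $j_{\ast}(x)=0$, the LES supplies $y\in H_2(W,\partial W;\mathcal{R})$ with $\partial(y)=x$, and I am free to modify $y$ by any element of $\ker\partial=\operatorname{im}(p_{\ast})$ where $p_{\ast}\colon H_2(W;\mathcal{R})\to H_2(W,\partial W;\mathcal{R})$. Choosing $s\in S$ with $xs=0$ shows $\partial(ys)=0$, so $ys\in\operatorname{im}(p_{\ast})$ and the class $\bar y\in H_2(W,\partial W;\mathcal{R})/\operatorname{im}(p_{\ast})$ is $\mathcal{R}$-torsion. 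The hard step is to promote this to an actual torsion lift $y'$ of $\bar y$ in $H_2(W,\partial W;\mathcal{R})$; then $y'\in TH_2(W,\partial W;\mathcal{R})$ satisfies $\partial(y')=x$, giving exactness. To produce this lift, the plan is to use the rank equality from Part (1), which forces the free part of $H_2(W,\partial W;\mathcal{R})$ to be carried by $\operatorname{im}(p_{\ast})$: picking $z_{\mathcal{K}\Lambda}\in H_2(W;\mathcal{K}\Lambda)$ with $p_{\ast}(z_{\mathcal{K}\Lambda})$ equal to the $\mathcal{K}\Lambda$-projection of $y$, clearing denominators against the Ore set $S$ yields $z\in H_2(W;\mathcal{R})$ such that $y-p_{\ast}(z)$ is $\mathcal{R}$-torsion. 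The subtle point in executing this step is the compatibility of torsion and Ore localization when $\mathcal{R}$ is not a PID, which is why Part (1)'s rank-matching is essential.
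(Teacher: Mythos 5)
Your proposal never uses the defining structure of a rational $(k)$-bordism --- the rational $(k)$-Lagrangian with rational $(k)$-duals --- and this omission is fatal in both parts.

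\textbf{Part (1).} You assert that the equality $\operatorname{rank}_{\mathcal{K}\Lambda}H_1(W;\mathcal{K}\Lambda)=\beta_1(W)-1$ follows from ``the usual chain-level argument for PTFA coefficient systems,'' and that this makes the two rank formulae match. But what the chain-level argument (Proposition 2.11 of Cochran--Orr--Teichner) actually gives is only the \emph{inequality} $b_1(W)\le\beta_1(W)-1$, which can be strict. Combined with the boundary hypothesis this yields only that $\operatorname{rank}_{\mathcal{K}\Lambda}(H_2(W;\mathcal{R})/I)\le\operatorname{rank}_{\mathbb{Q}}(H_2(W;\mathbb{Q})/I_0)=2m$, not equality. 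The paper supplies the reverse inequality by a separate argument: images of the $m$ Lagrangian generators and their $m$ duals give $2m$ classes in $H_2(W;\mathcal{R})/I$ whose pairing matrix has the invertible block form $\left(\begin{smallmatrix}0&I\\ I&X\end{smallmatrix}\right)$, so the rank is at least $2m$. Without this input the lemma is simply false: a compact $4$-manifold satisfying your hypotheses but admitting no Lagrangian can have $b_1(W)<\beta_1(W)-1$ and strictly smaller $\mathcal{K}\Lambda$-rank. You need to invoke the bordism structure explicitly.

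\textbf{Part (2).} You correctly reduce the problem to lifting a torsion class $\bar y\in H_2(W,\partial W;\mathcal{R})/\operatorname{im}(p_\ast)$ to a torsion class in $H_2(W,\partial W;\mathcal{R})$, and you correctly identify this as the hard step, but your proposed resolution does not go through. Clearing denominators against the Ore set produces $z\in H_2(W;\mathcal{R})$ and a nonzero $s'\in\mathcal{R}$ with $ys'-p_\ast(z)$ torsion, not $y-p_\ast(z)$ torsion; since one cannot divide by $s'$ inside $\mathcal{R}$, this gives $-\partial(y)s'$ in the image of $\partial$ restricted to torsion, not $-\partial(y)$ itself, and a submodule need not be closed under such division. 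Your auxiliary claim that ``the free part of $H_2(W,\partial W;\mathcal{R})$ is carried by $\operatorname{im}(p_\ast)$'' is also false in general: $\operatorname{im}(p_\ast)$ has full rank in $H_2(W,\partial W;\mathcal{K}\Lambda)$ precisely when $\partial$ is rationally zero, i.e.\ when $H_1(\partial W;\mathcal{K}\Lambda)\to H_1(W;\mathcal{K}\Lambda)$ is injective, and this fails if $\partial W$ has any component on which $\phi$ is trivial (such components carry nonzero $H_1(-;\mathcal{K}\Lambda)$). The paper's mechanism is different and again relies on the Lagrangian: the rank argument from Part (1) shows that the dualization map $j^\ast$ is an isomorphism onto a free module, forcing $\ov\lambda\colon H_2(W;\mathcal{R})/I\to(H_2(W;\mathcal{R})/I)^\ast$ to be \emph{surjective}. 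One then runs a commutative diagram comparing Poincar\'e--Lefschetz duality and the Kronecker evaluation map $\kappa$: given $p\in TH_1(\partial W;\mathcal{R})$ with $j_\ast(p)=0$, pick $x$ with $\partial_\ast x=p$; since $p$ is torsion, $\kappa\circ\mathrm{P.D.}(p)=0$, so $\kappa\circ\mathrm{P.D.}(x)$ pulls back to $(H_2(W;\mathcal{R})/I)^\ast$, and surjectivity of $\ov\lambda$ lets one subtract a class from $\operatorname{im}(\pi_\ast)$ to land $x$ in $\ker(\kappa\circ\mathrm{P.D.})$, which is torsion. Your rank-only strategy cannot substitute for the surjectivity of the intersection form.
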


We should point out that the rank hypothesis on $H_1(M_i;\mathbb{Z}\Lambda)$ is \emph{always satisfied} if $\beta_1(M)=1$ (by ~\cite[Proposition 2.11]{COT}), which will always be the case in this paper. The more general result is needed to study links.

\begin{proof}[Proof of Lemma~\ref{lem:exact}] First we establish the rank claim. We can assume that $\partial W$ is not empty. Let $\beta_i$ denote the $i^{th}$-Betti number. By duality
$$
\beta_3(W)=\beta_1(W,\partial W)~~ \text{and} ~~\beta_2(W)=\beta_2(W,\partial W)
$$
Using these facts, by examining the long exact sequence of the pair for reduced homology with $\mathbb{Q}$-coefficients
$$
H_2(\partial W)\overset{j^2_*}\to H_2(W)\to H_2(W,\partial W)\overset{\partial_*}\lra H_1(\partial W)\overset{j_*}\to H_1(W)\to H_1(W,\partial W)\to \tilde{H_0}(\partial W)\to 0
$$
and setting alternating sums of ranks equal to zero, we see that
$$
-\text{rank}_{\mathbb{Q}}\text{im}j_*^2=-\beta_1(\partial W)+\beta_1(W)-\beta_3(W)+\beta_0(\partial W)-1.
$$
Now let $2m= rank_\mathbb{Q}(H_2(W)/j_*^2(H_2(\partial W))$. Then from the above we have
$$
2m=\beta_2(W)-\text{rank}_{\mathbb{Q}}\text{im}j_*^2=\beta_2(W)-\beta_1(\partial W)+\beta_1(W)-\beta_3(W)+\beta_0(\partial W)-1,
$$
or
$$
2m=\chi(W)+2\beta_1(W)-2-\beta_1(\partial W)+\beta_0(\partial W)
$$
where $\chi$ is the Euler characteristic.

We claim that $2m$ is also the $\mathcal{K}\Lambda$-rank of $H_2(W;\mathcal{R})/I$. First we show that this rank is at most $2m$. To see this let
$$
b_i(W)=~\rk_{\mathcal{K}\Lambda}H_i(W;\mathcal{K}\Lambda)\equiv \text{rank}_{\mathbb{Z}\Lambda}H_i(W;\mathbb{Z}\Lambda).
$$
Again, by duality
$$
b_3(W)=b_1(W,\partial W)~~ \text{and} ~~b_2(W)=b_2(W,\partial W)
$$
Since $W$ is connected and the coefficient system on $W$ is non-trivial, $b_0(W)=b_4(W)=0$ by ~\cite[Proposition 2.9]{COT}. Using these facts, by examining the long exact sequence of the pair for homology with $\mathcal{K}\Lambda$-coefficients
$$
H_2(\partial W)\overset{j^2_*}\to H_2(W)\to H_2(W,\partial W)\overset{\partial_*}\lra H_1(\partial W)\overset{j_*}\to H_1(W)\to H_1(W,\partial W)\to H_0(\partial W)\to 0,
$$
we see as above that
$$
-\text{rank}_{\mathcal{K}\Lambda}\text{im}j_*^2=-b_1(\partial W)+b_1(W)-b_3(W)+b_0(\partial W).
$$
so the rank of $H_2(W;\mathcal{R})/I$ is
$$
b_2(W)-\text{rank}_{\mathcal{K}\Lambda}\text{im}j_*^2=b_2(W)-b_1(\partial W)+b_1(W)-b_3(W)+b_0(\partial W).
$$
Since the Euler characteristic can be calculated with $\mathcal{K}\Lambda$-coefficients this can be written as
$$
\text{rank}_{\mathcal{K}\Lambda}(H_2(W;\mathcal{R})/I)=\chi(W)+2b_1(W)-b_1(\partial W)+b_0(\partial W).
$$
Combining this with our previous computation
$$
\text{rank}_{\mathcal{K}\Lambda}(H_2(W;\mathcal{R})/I)-2m=2(b_1(W)-\beta_1(W)+1)+\beta_1(\partial W)-b_1(\partial W)-\beta_0(\partial W)+b_0(\partial W).
$$
We claim that the quantity on the right-hand side of this equality is at most zero. By ~\cite[Proposition 2.11]{COT}
$$
b_1(W)\leq \beta_1(W)-1
$$
so the quantity in parentheses is non-positive. Thus it will suffice to show that
$$
(\beta_1(M_i)-b_1(M_i))-(\beta_0(M_i)-b_0(M_i))= 0
$$
for each component $M_i$ of $\partial W$.
If $M_i$ is a boundary component on which $\phi$ restricts to be trivial, then this is clear since then the $\mathbb{Q}$-ranks agree with the $\mathcal{K}\Lambda$-ranks. Otherwise $\beta_0(M_i)=1$, $b_0(M_i)=0$ by ~\cite[Proposition 2.9]{COT} and
$$
b_1(M_i)=\beta_1(M_i)-1
$$
by our hypothesis. Thus we have established that
$$
\text{rank}_{\mathcal{K}\Lambda}(H_2(W;\mathcal{R})/I)\leq 2m.
$$
We shall soon see that this rank is at least $2m$, hence equals $2m$.

\begin{rem}\label{rem:boundedrho} Note that we have actually shown more. Even with no rank assumptions on the boundary, we have shown that
$$
\text{rank}_{\mathcal{K}\Lambda}(H_2(W;\mathcal{R})/I)\leq 2m+\sum_{\phi_i\neq 0}(\beta_1(M_i)-1-b_1(M_i)).
$$
This remark will be used in a later paper.
\end{rem}
Recall that the cardinality of a rational $(k)$-Lagrangian for $W$ is, by definition, $m$.
Let $\{\ell_1,\ell_2,\ldots, \ell_m\}$ generate a rational
$(k)$-Lagrangian for $W$ and $\{d_1,d_2,\ldots, d_m\}$ its
$(k)$-duals in $H_2(W;\mathbb{Q}[\pi_1(W)/\pi_1(W)^{(k)}])$. Since $\Lambda^{(k)}=1$, $\phi$ factors through
$\phi' : \pi_1(W)/\pi_1(W)^{(k)} \lra \Lambda$. We denote by $\ell_i'$ and
$d_i'$ the images of $\ell_i$ and $d_i$ in $H_2(W;\mathcal{R})$. By
naturality of intersection forms, the intersection form $\lambda$
defined on $H_2(W;\mathcal{R})$ vanishes on the module generated by
$\{\ell_1',\ell_2',\ldots, \ell_m'\}$ and the $d_i'$ are still duals.  Recall that the intersection form factors through
$$
\ov{\lambda} : H_2(W;\mathcal{R})/I \times H_2(W;\mathcal{R})/I \lra \mathcal{R}.
$$
Let $\mathcal{R}^m\oplus \mathcal{R}^m$ be the
free module on $\{\ell_i',d_i'\}$ and let $(-)^*$ denote $Hom_\mathcal{R}(-,\mathcal{R})$. The following composition
$$
\mathcal{R}^m\oplus \mathcal{R}^m \xrightarrow{j_*} (H_2(W;\mathcal{R})/I) \xrightarrow{\ov\lambda}
(H_2(W;\mathcal{R})/I)^* \xrightarrow{j^*} (\mathcal{R}^m\oplus \mathcal{R}^m)^*
$$
is then the definition of $\ov{\lambda}$ (restricted to this free module) and so is represented by a block matrix
$$
\left(\begin{matrix} 0 & I\cr I & X
\end{matrix}\right),
$$
for some $X$.
This matrix has an inverse which is
$$
\left(\begin{matrix} -X & I\cr I & 0
\end{matrix}\right).
$$
Thus the composition is an isomorphism. This implies that both $j^*$ and $j^*\circ \ov\lambda$ are epimorphisms. It follows immediately that
$$
\text{rank}_{\mathcal{K}\Lambda}(H_2(W;\mathcal{R})/I)\geq 2m
$$
and hence equality must hold. This concludes the proof of the first claim of the lemma.

Continuing, the rank of $(H_2(W;\mathcal{R})/I)^*$ must also be $2m$, and hence the kernel of the epimorphism $j^{\ast}$ is the torsion submodule of $(H_2(W;\mathcal{R})/I)^*$. But the latter is torsion-free since $\mathcal{R}$ is a domain. Hence $j^{\ast}$ is an isomorphism and $(H_2(W;\mathcal{R})/I)^*$ is free of rank $2m$. It follows that $\ov\lambda$ is surjective. Now consider the commutative diagram below with
$\mathcal{R}$-coefficients.
$$
\begin{diagram}\label{diagram:Blanch}\dgARROWLENGTH=1.2em
\node{H_2(W)/I} \arrow{e,t}{\pi_{\ast}}
\arrow{sse,b}{\lambda} \arrow[2]{s,l}{\ov\lambda} \node{H_2(W,\partial W)} \arrow{e,t}{\partial_*}
\arrow{s,r}{P.D.}\node{H_1(\partial W)}
\arrow{e,t}{j_*} \arrow{s,r}{P.D.}\node{H_1(W)}\\
\node[2]{H^2(W)} \arrow{s,r}{\kappa}\node{H^2(\partial W)}\arrow{s,r}{\kappa}\\
\node{(H_2(W)/I)^*}\arrow{e,t}{q_*}\node{(H_2(W))^*}\arrow{e,t}{j^*}\node{(H_2(\partial W))^*}
\end{diagram}
$$

Given $p\in TH_1(\partial W;\mathcal{R})$ such that $j_*(p)=0$, choose $x$ such that
$\partial_* x=p$. Since $p$ is torsion, $\kappa\circ
\operatorname{P.D.}(p)=0$ and so $\kappa\circ
\operatorname{P.D.}(x)=q_*(z)$ for some $z\in (H_2(W)/I)^*$. Since $\ov\lambda$ is surjective, we can choose $y$ such that element $\ov\lambda(y)=z$. Then $\partial_*(x-\pi_{\ast}(y))=p$ and
$x-\pi_{\ast}(y)$ lies in the  kernel of
$\kappa\circ\operatorname{P.D.}$., hence is torsion. Thus we have shown that every torsion element of ker$j_*$ is in the image of an element of $TH_2(W,\partial W;\mathcal{R})$. This concludes the proof of the Lemma~\ref{lem:exact}.

\end{proof}

\begin{proof}[Proof of Theorem~\ref{thm:sigvanishes}] Note that the ordinary signature of any $(n+1)$-bordism vanishes since the $(n+1)$-Lagrangian projects to a Lagrangian of the ordinary intersection form. Let $\tilde{I}$ denote the image of the map
$$
H_2(\partial W;\mathcal{K}\G)\overset{j_*}{\lra}H_2(W;\mathcal{K}\G).
$$
By property $(1)$ of Proposition~\ref{prop:rho invariants}, it suffices to show that there is a one-half rank submodule, $\mathcal{L}$ of $H_2(W;\mathcal{K}\G)/\tilde {I}$ on which $\tilde\lambda$ vanishes. By the first part of Lemma~\ref{lem:exact}, applied with $\Lambda=\G$, $k=n+1$ and $\mathcal{R}=\mathcal{K}\G$, we see that we need to find an $\mathcal{L}$ whose rank is one half of
$$
\text{rank}_{\mathbb{Q}}(H_2(W;\mathbb{Q})/I_0)
$$
where $I_0$ is the image of $H_2(\partial W;\mathbb{Q})$. Let $\{\ell_1,\ell_2,\ldots, \ell_m\}$ generate a rational
$(n+1)$-Lagrangian for $W$ and $\{d_1,d_2,\ldots, d_m\}$ its
$(n+1)$-duals in $H_2(W;\mathbb{Q}[\pi_1(W)/\pi_1(W)^{(n+1)}])$. Recall that the cardinality of $m$ of a generating set for a Lagrangian is such that
$$
m=1/2\text{rank}_\mathbb{Q}(H_2(W;\mathbb{Q})/I_0).
$$
Since $\G^{(n+1)}=1$, $\phi$ factors through
$\phi' : \pi_1(W)/\pi_1(W)^{(n+1)} \lra \G$. We denote by $\tilde{\ell}_i$ and
$\tilde{d}_i$ the images of $\ell_i$ and $d_i$ in $H_2(W;\mathcal{K}\G)/\tilde{I}$. By
naturality of intersection forms, the (nonsingular) intersection form $\tilde{\lambda}$
induced on $H_2(W;\mathcal{K}\G)/\tilde{I}$ vanishes on the submodule, $\mathcal{L}$, generated by
$\{\tilde{\ell}_1,\tilde{\ell}_2,\ldots, \tilde{\ell}_m\}$. Moreover the $\tilde{d}_i$ are still duals. Since duals exist,
$$
\text{rank}_{\mathcal{K}\G}\mathcal{L}=m
$$
as required.
\end{proof}

\section{Higher-Order Blanchfield forms and n-Bordisms}\label{Blanchfieldforms}

We have seen in Lemma~\ref{lem:additivity} that an infection will have an effect on a $\rho$-invariant only if the infection circle $\eta$ survives under the map defining the coefficient system. For example if one creates a knot $J$ by infecting a slice knot $R$ along a curve $\eta$ that dies in $\pi_1(B^4-\Delta)$ for some slice disk $\Delta$ for $R$, then this infection will have no effect on the $\rho$-invariants associated to any coefficient system that extends over $B^4-\Delta$. Indeed the resulting knot is known to be topologically slice ~\cite{CFT}. Therefore it is important to prove \emph{injectivity} theorems concerning $\pi_1(S^3-R)\to\pi_1(B^4-\Delta)$, that is to locate elements of $\pi_1(S^3-R)$ that survive under such inclusions. Moreover the curve $\eta$ must usually lie in $\pi_1(S^3-R)^{(n)}$. For then it is known that $J$ will be rationally $n$-solvable and we seek to show that it is not $(n.5)$-solvable. Therefore, loosely speaking, we need to be able to prove that $\eta$ survives under the map
$$
j_*:\pi_1(S^3-R)^{(n)}/\pi_1(S^3-R)^{(n+1)}\to \pi_1(B^4-\Delta)^{(n)}/\pi_1(B^4-\Delta)^{(n+1)}.
$$
For $n=1$ this is a question about ordinary Alexander modules and was solved by Casson-Gordon and Gilmer using linking forms on finite branched covers. In general this seems a daunting task. (Note that this is impossible if $\pi_1(B^4-\Delta)$ is solvable, which occurs, for example, for the standard slice disk for the ribbon knot $R$ of Figure~\ref{fig:ribbonCG}(e.g. see ~\cite{FrT})). To see that higher-order Alexander modules are relevant to this task, observe that the latter quotient is the abelianization of $\pi_1(B^4-\Delta)^{(n)}$ and thus can be interpreted as $H_1(W_n)$ where $W_n$ is the (solvable) covering space of $B^4-\Delta$ corresponding to the subgroup $\pi_1(B^4-\Delta)^{(n)}$. Such modules were named \emph{higher-order Alexander modules} in ~\cite{COT}~\cite{C}~\cite{Ha1}. We will employ higher-order Blanchfield linking forms on higher-order Alexander modules to find restrictions on the kernels of such maps. The logic of the technique is entirely analogous to the classical case ($n=1$): Any two curves $\eta_0, \eta_1$, say, that lie in the kernel of $j_*$ must satisfy $\mathcal{B}\ell(\eta_0,\eta_0)=\mathcal{B}\ell(\eta_0,\eta_1)=\mathcal{B}\ell(\eta_1,\eta_1)=0$ with respect to a higher order linking form $\mathcal{B}\ell$. Our new insight is that, if the curves lie in a submanifold $S^3-K\hookrightarrow S^3-J$, a situation that arises whenever $J$ is formed from $R$ by infection using a knot $K$, then the values (above) of the higher-order Blanchfield form of $J$ can be expressed in terms of the values of the classical Blanchfield form of $K$!

Higher-order Alexander modules and higher-order linking forms for classical knot exteriors and for closed $3$-manifolds with $\beta_1(M)=1$ were introduced in ~\cite[Theorem 2.13]{COT} and further developed in ~\cite{C} and ~\cite{Lei1}. These were defined on the so called higher-order Alexander modules $TH_1(M;\mathcal{R})$, where $TH_1(M;\mathcal{R})$ denotes the $\mathcal{R}$-torsion submodule.

\begin{thm}\label{thm:blanchfieldexist}~\cite[Theorem 2.13]{COT} Suppose $M$ is a closed, connected, oriented $3$-manifold with $\beta_1(M)=1$ and $\phi:\pi_1(M)\to \Lambda$ is a PTFA coefficient system. Suppose $\mathcal{R}$ is a classical Ore localization of the Ore domain $\mathbb{Z}\Lambda$ (so $\mathbb{Z}\Lambda\subset\mathcal{R}\subset \mathcal{K}\Lambda$). Then there is a linking form:
$$
\mathcal{B}l^M_{\mathcal{R}}: TH_1(M;\mathcal{R})\to (TH_1(M;\mathcal{R}))^{\#}\equiv \overline{Hom_{\mathcal{R}}(TH_1(M;\mathcal{R}), \mathcal{K}\Lambda/\mathcal{R})}.
$$
\end{thm}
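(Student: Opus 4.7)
The plan is to construct $\mathcal{B}\ell^M_{\mathcal{R}}$ as the standard composition of Poincar\'e duality, a Bockstein homomorphism, and a Kronecker evaluation map, as in \cite[Theorem 2.13]{COT}, checking along the way that everything works for an arbitrary Ore localization $\mathcal{R}$ rather than only for $\mathcal{R}=\mathcal{K}\Lambda$.

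First I would exploit $\beta_1(M)=1$ to collapse all non-torsion behavior. We may assume $\phi$ is nontrivial; otherwise $\mathcal{R}=\mathbb{Q}$ and the statement reduces to the classical rational Blanchfield form. By \cite[Propositions 2.9 and 2.11]{COT}, we have $H_0(M;\mathcal{K}\Lambda)=H_1(M;\mathcal{K}\Lambda)=0$, and then Poincar\'e duality with coefficients in the skew field $\mathcal{K}\Lambda$ gives $H^1(M;\mathcal{K}\Lambda)=H^2(M;\mathcal{K}\Lambda)=0$. Since $\mathcal{K}\Lambda$ is flat over $\mathcal{R}$, it follows that $H_1(M;\mathcal{R})$ is entirely $\mathcal{R}$-torsion, so $TH_1(M;\mathcal{R})=H_1(M;\mathcal{R})$.

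Next, from the short exact sequence of coefficient bimodules
$$
0 \longrightarrow \mathcal{R} \longrightarrow \mathcal{K}\Lambda \longrightarrow \mathcal{K}\Lambda/\mathcal{R} \longrightarrow 0
$$
I would extract the associated Bockstein long exact sequence in cohomology. Because $H^1(M;\mathcal{K}\Lambda)=H^2(M;\mathcal{K}\Lambda)=0$, the Bockstein
$$
B: H^1(M;\mathcal{K}\Lambda/\mathcal{R}) \xrightarrow{\ \cong\ } H^2(M;\mathcal{R})
$$
is an isomorphism, so no indeterminacy arises in inverting it. Poincar\'e duality (with its standard involution $g\mapsto g^{-1}$) then gives an isomorphism $\operatorname{PD}:H^2(M;\mathcal{R})\xrightarrow{\cong} H_1(M;\mathcal{R})=TH_1(M;\mathcal{R})$, and the Kronecker evaluation map supplies
$$
\operatorname{ev}: H^1(M;\mathcal{K}\Lambda/\mathcal{R}) \longrightarrow \overline{\operatorname{Hom}_{\mathcal{R}}(H_1(M;\mathcal{R}), \mathcal{K}\Lambda/\mathcal{R})}.
$$
I would define $\mathcal{B}\ell^M_{\mathcal{R}} \defeq \operatorname{ev}\circ B^{-1}\circ \operatorname{PD}^{-1}$.

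The main technical burden is to verify that $\operatorname{ev}$ lands in, and is computed by, the conjugate-Hom module exactly as claimed. The argument is the usual one: since $\mathcal{K}\Lambda/\mathcal{R}$ is a divisible (hence injective) $\mathcal{R}$-module, the relevant $\operatorname{Ext}^1$ term in the universal coefficient spectral sequence vanishes, so $\operatorname{ev}$ may be identified with an evaluation pairing and the overline on $\operatorname{Hom}$ records precisely the involution introduced by twisted Poincar\'e duality. The one genuinely delicate point — and the step I expect to be the main obstacle — is the careful bookkeeping of the involution on the non-commutative ring $\mathcal{R}$ and checking that the Ore localization hypothesis (rather than passage all the way to $\mathcal{K}\Lambda$) is enough to guarantee flatness over $\mathbb{Z}\Lambda$, injectivity of $\mathcal{K}\Lambda/\mathcal{R}$, and the vanishing of the relevant $\operatorname{Tor}$ and $\operatorname{Ext}$ terms. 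All of these are established in \cite[Section 2]{COT} and \cite{C}; the verification here is to observe that their arguments are formal in $\mathcal{R}$ and do not require $\mathcal{R}$ to be a PID or $\mathcal{K}\Lambda$ itself.
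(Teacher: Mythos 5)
You are not reimplementing a proof that the paper itself contains: the authors state Theorem~\ref{thm:blanchfieldexist} as a citation to \cite[Theorem 2.13]{COT}, with the non-localized generalization credited to \cite{Lei1}\cite{Lei3}, and give no argument of their own. So the useful comparison is against the COT/Leidy construction, and your outline (Poincar\'e duality, then the Bockstein of $0\to\mathcal{R}\to\mathcal{K}\Lambda\to\mathcal{K}\Lambda/\mathcal{R}\to 0$, then Kronecker evaluation into $\overline{\Hom_{\mathcal{R}}(-,\mathcal{K}\Lambda/\mathcal{R})}$) is exactly that construction. Your reductions at the start are also correct: $\beta_1(M)=1$ together with \cite[Propositions 2.9, 2.11]{COT} and flatness of $\mathcal{K}\Lambda$ over $\mathcal{R}$ give $H_1(M;\mathcal{R})=TH_1(M;\mathcal{R})$ and make the Bockstein $B:H^1(M;\mathcal{K}\Lambda/\mathcal{R})\to H^2(M;\mathcal{R})$ an isomorphism.

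The one genuine error is your claim that $\mathcal{K}\Lambda/\mathcal{R}$ is \emph{injective} because it is divisible, and that the relevant $\Ext^1$ term in the universal coefficient spectral sequence therefore vanishes. ``Divisible implies injective'' holds over a PID, which is precisely the case $\mathcal{R}=\mathcal{K}\Lambda$ (or a skew Laurent polynomial ring as in the localized COT setting), but it fails for a general Ore localization such as the crucial unlocalized cases $\mathcal{R}=\mathbb{Z}\Lambda$ or $\mathbb{Q}\Lambda$. This is not a cosmetic point: the paper's remark immediately following the theorem says explicitly that in this generality $TH_1(M;\mathcal{R})$ ``need not have homological dimension one nor even be finitely-generated, and these linking forms are \emph{singular}.'' If $\mathcal{K}\Lambda/\mathcal{R}$ were injective and the $\Ext^1$ term vanished, the evaluation map would be an isomorphism and the form would be nonsingular, contradicting what the authors are at pains to emphasize. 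Fortunately the theorem as stated only asserts \emph{existence} of the linking form, and for that you do not need the $\Ext$ term to vanish: the Kronecker evaluation map $H^1(M;\mathcal{K}\Lambda/\mathcal{R})\to\overline{\Hom_{\mathcal{R}}(H_1(M;\mathcal{R}),\mathcal{K}\Lambda/\mathcal{R})}$ exists unconditionally as the edge map of the universal coefficient spectral sequence. So the right move is to drop the injectivity claim, keep the same composition $\operatorname{ev}\circ B^{-1}\circ\operatorname{PD}^{-1}$, and simply not assert that $\operatorname{ev}$ is an isomorphism.
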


\begin{rem} It is crucial to our techniques that we work with such Blanchfield forms without localizing the coefficient systems. When we speak of the \emph{unlocalized} Blanchfield form we mean that $\mathcal{R}=\mathbb{Z}\Lambda$ or $\mathcal{R}=\mathbb{Q}\Lambda$. It is in this aspect that our work deviates from that of ~\cite{COT}~\cite{COT2}~\cite{CK}. This was investigated in ~\cite{Lei1}~\cite{Lei3}.  In this generality, $TH_1(M;\mathcal{R})$ need not have homological dimension one nor even be finitely-generated, and these linking forms are \emph{singular}. A non-localized Blanchfield form for knots also played the crucial role in ~\cite{FrT}.
\end{rem}

%\textbf{Remark for us} We don't need that $W$ is a $(k)$-bordism, only need the weaker condition that the %intersection form $H_2(W;\mathcal{R})/I$ is non-singular, actually we just use that $\ov{\Lambda}$ in the proof %below is surjective!!!

There is another key result of ~\cite{COT} concerning solvability whose generalization to null-bordism will be a crucial new ingredient in our proofs. Once again, the rank hypothesis is automatically satisfied if $\beta_1(M_i)=1$.

\begin{thm}\label{thm:selfannihil} Suppose $W$ is a rational $(k)$-null-bordism and
$\phi:\pi_1(W)\ra\Lambda$ is a non-trivial coefficient system where
$\Lambda$ is a PTFA group with $\Lambda^{(k)}=1$. Let $\mathcal{R}$ be an Ore localization of $\mathbb{Z}\Lambda$ so $\mathbb{Z}\Lambda\subset\mathcal{R}\subset \mathcal{K}\Lambda$. Suppose that, for each component $M_i$ of $\partial W$ for which $\phi$ restricted to $\pi_1(M_i)$ is nontrivial, that $\text{rank}_{\mathbb{Z}\Lambda}H_1(M_i;\mathbb{Z}\Lambda)=\beta_1(M_i)-1$. Then if $P$ is the kernel of the inclusion-induced map
$$
TH_1(\partial W;\mathcal{R})\xrightarrow{j_{\ast}} TH_1(W;\mathcal{R}),
$$
then $P\subset P^\perp$ with respect to the Blanchfield form on $TH_1(\partial W;\mathcal{R})$.
\end{thm}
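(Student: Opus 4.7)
The plan is to adapt the classical proof that the kernel of the inclusion from a slice knot exterior into a slice-disk exterior is self-annihilating under the Blanchfield form, replacing the classical vanishing $H_{2}=0$ of the disk exterior by the exactness provided by part (2) of Lemma~\ref{lem:exact}. Note that the hypotheses on $(W,\phi,\Lambda,\mathcal{R})$ in Theorem~\ref{thm:selfannihil} are exactly those required by Lemma~\ref{lem:exact}, so that sequence is at our disposal.

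Fix $p,p'\in P$. By Lemma~\ref{lem:exact}(2) there exists $x\in TH_2(W,\partial W;\mathcal{R})$ with $\partial_{*}x=p$. I will unwind the definition of $\mathcal{B}\ell$ on $\partial W$ using the Bockstein sequence arising from $0\to\mathcal{R}\to\mathcal{K}\Lambda\to\mathcal{K}\Lambda/\mathcal{R}\to 0$: Poincar\'e duality on $\partial W$ identifies $p$ with a class $PD(p)\in TH^{2}(\partial W;\mathcal{R})$, and since $p$ is $\mathcal{R}$-torsion its image in $H^{2}(\partial W;\mathcal{K}\Lambda)$ vanishes (torsion in a $\mathcal{K}\Lambda$-vector space is zero). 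Hence $PD(p)$ has a Bockstein lift $\tilde p\in H^{1}(\partial W;\mathcal{K}\Lambda/\mathcal{R})$, and $\mathcal{B}\ell_{\partial W}(p,p')=\langle\tilde p,p'\rangle$ under the Kronecker evaluation into $\mathcal{K}\Lambda/\mathcal{R}$.

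The decisive move is to choose $\tilde p$ to be the restriction of a class defined on all of $W$. Poincar\'e--Lefschetz duality for the pair $(W,\partial W)$ sends $x$ to a class $X\in TH^{2}(W;\mathcal{R})$, and the naturality square
$$
\begin{CD}
TH_{2}(W,\partial W;\mathcal{R}) @>\partial_{*}>> TH_{1}(\partial W;\mathcal{R})\\
@V{PD}VV  @VV{PD}V\\
TH^{2}(W;\mathcal{R}) @>j^{*}>> TH^{2}(\partial W;\mathcal{R})
\end{CD}
$$
gives $j^{*}(X)=PD(p)$. Since $X$ is $\mathcal{R}$-torsion and $H^{2}(W;\mathcal{K}\Lambda)$ is torsion-free, $X$ lifts through the Bockstein for $W$ to some $\tilde X\in H^{1}(W;\mathcal{K}\Lambda/\mathcal{R})$. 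Naturality of the Bockstein allows us to take $\tilde p:=j^{*}(\tilde X)$, and then
$$
\mathcal{B}\ell_{\partial W}(p,p')=\langle j^{*}(\tilde X),p'\rangle=\langle\tilde X,j_{*}(p')\rangle=0,
$$
the final equality because $p'\in P$ means $j_{*}(p')=0$ in $TH_{1}(W;\mathcal{R})$.

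The main obstacle is verifying that all of these squares genuinely commute and that the computation really recovers $\mathcal{B}\ell_{\partial W}$ in the present \emph{unlocalized} generality, where $\mathcal{R}$ need not be a PID, the Blanchfield form may be singular, and Bockstein lifts are not unique. One has to argue that the indeterminacy in the choice of $\tilde X$ (i.e.\ modification by the image of $H^{1}(W;\mathcal{K}\Lambda)$) does not affect the value of $\langle j^{*}(\tilde X),p'\rangle$ modulo $\mathcal{R}$, and that the Kronecker pairing with a torsion class is compatible with the Bockstein definition of $\mathcal{B}\ell$. This is where the rank hypothesis on the boundary components is used, via Lemma~\ref{lem:exact}: without it one could not guarantee that the lift $x$ exists as an $\mathcal{R}$-torsion element of $H_{2}(W,\partial W;\mathcal{R})$, and the corresponding cohomological lift $\tilde X$ would fail to exist. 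Once this bookkeeping is done, the argument above shows $\mathcal{B}\ell_{\partial W}(p,p')=0$ for all $p,p'\in P$, i.e.\ $P\subset P^{\perp}$.
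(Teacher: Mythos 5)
Your proof is correct and follows the same strategy as the paper's: lift $p$ to a torsion class $x\in TH_2(W,\partial W;\mathcal{R})$ using Lemma~\ref{lem:exact}(2), transport to $W$, and pair against $p'$ to get zero because $j_*(p')=0$. The only difference is one of packaging: the paper encapsulates the Poincar\'e--Lefschetz duality, Bockstein naturality, and Kronecker-pairing adjointness that you spell out into a cited relative Blanchfield form $\mathcal{B}\ell^{rel}$ and its compatibility square (Lemma~\ref{lem:fourmanBlanch}, with references to CHL4 and Cha3 for the proof), and then reads off the vanishing from the commutativity of that square, whereas you unwind the same commutativity directly; you also correctly note and dispose of the indeterminacy in the Bockstein lift.
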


\begin{proof}[Proof of Theorem~\ref{thm:selfannihil}] We need the following which was asserted in the proof of ~\cite[Theorem 4.4]{COT}. A careful proof in more generality is given in ~\cite{CHL4} (See also ~\cite[Lemmas 3.2, 3.3]{Cha3}).

\begin{lem}\label{lem:fourmanBlanch} There is a Blanchfield form, $\mathcal{B}l^{rel}$,
$$
\mathcal{B}l^{rel}_{\mathcal{R}}:~TH_2(W,\partial W;\mathcal{R})\to TH_1(W)^{\#}
$$
such that the following diagram, with coefficients in $\mathcal{R}$ unless specified otherwise, is commutative up to sign:
\begin{equation}\label{diag:compatible2}
\begin{diagram}\dgARROWLENGTH=1em
\node{TH_2(W,\partial W;\mathcal{R})} \arrow{e,t}{\partial_*}
\arrow{s,r}{\mathcal{B}l^{rel}_\mathcal{R}}\node{TH_1(\partial W;\mathcal{R})} \arrow{s,r}{\mathcal{B}l^{\partial W}_\mathcal{R}}\\
\node{TH_1(W;\mathcal{R})^{\#}} \arrow{e,t}{\tilde{j_*}}\node{TH_1(\partial W;\mathcal{R})^{\#}}
\end{diagram}
\end{equation}
\end{lem}

Now suppose $P=\text{kernel}j_*\subset TH_1(\partial W;\mathcal{R})$. Suppose $x\in P$ and $y\in P$. According to Lemma~\ref{lem:exact}, we have $x=\partial_*(\tilde{x})$ for some $\tilde{x}\in TH_2(W,\partial W)$. Thus by Diagram~\ref{diag:compatible2},
$$
\mathcal{B}l^{\partial W}_\mathcal{R}(x)(y)=\mathcal{B}l^{\partial W}_\mathcal{R}(\partial_*\tilde{x})(y)=\tilde{j_*}(\mathcal{B}l^{rel}_\mathcal{R}(\tilde{x})(y)= \mathcal{B}l^{rel}_\mathcal{R}(\partial_*\tilde{x})(j_*(y))=0
$$
since $j_*(y)=0$. Hence $P\subset P^\perp$ with respect to the Blanchfield form on $TH_1(\partial W;\mathcal{R})$.

This concludes the proof of Theorem~\ref{thm:selfannihil}.
\end{proof}

In many important situations the induced coefficient system $\phi:\pi_1(M_K)\to \Lambda$ factors through, $\mathbb{Z}$, the abelianization. In this case the higher-order Alexander module of $M_K$ and the higher-order Blanchfield form $\mathcal{B}l^K_\Lambda$ are merely the classical Blanchfield form on the classical Alexander module, ``tensored up''. What is meant by this is the following. Supposing that $\phi$ is both nontrivial and factors through the abelianization, the induced map $\text{image}(\phi)\equiv\mathbb{Z}\hookrightarrow \Lambda$ is an embedding so it induces embeddings
$$
\phi:\mathbb{Q}[t,t^{-1}]\hookrightarrow \mathbb{Q}\Lambda,~\text{and}~~ \phi:\mathbb{Q}(t)\hookrightarrow \mathcal{K}\Lambda.
$$
Moreover there is an isomorphism
$$
H_1(M_K;\mathbb{Q}\Lambda)\cong H_1(M_K;\mathbb{Q}[t,t^{-1}])\otimes_{\mathbb{Q}[t,t^{-1}]}\mathbb{Q}\Lambda \cong \mathcal{A}_0(K)\otimes_{\mathbb{Q}[t,t^{-1}]}\mathbb{Q}\Lambda,
$$
where $\mathcal{A}_0(K)$ is the classical (rational) Alexander module of $K$ and where $\mathbb{Q}\Lambda$ is a $\mathbb{Q}[t,t^{-1}]$-module via the map $t\to \phi(\alpha)$ ~\cite[Theorem 8.2]{C}.
We further claim:
\begin{lem}\label{lem:embedding} $\phi$ induces an embedding
$$
\ov\phi:\mathbb{Q}(t)/\mathbb{Q}[t,t^{-1}]\hookrightarrow \mathcal{K}\Lambda/\mathbb{Q}\Lambda.
$$
\end{lem}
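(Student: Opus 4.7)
My plan is to reduce the statement to showing that $\phi^{-1}(\mathbb{Q}\Lambda) = \mathbb{Q}[t,t^{-1}]$ inside $\mathbb{Q}(t)$. Well-definedness of $\ov\phi$ is automatic once one notes that the ring map $\phi:\mathbb{Q}[t,t^{-1}] \hookrightarrow \mathbb{Q}\Lambda \subset \mathcal{K}\Lambda$ extends uniquely to $\phi:\mathbb{Q}(t) \to \mathcal{K}\Lambda$ (every nonzero $\phi(q)$ is invertible in $\mathcal{K}\Lambda$, and the image $\phi(\mathbb{Q}[t,t^{-1}])$ is commutative, so there is no ambiguity in forming $\phi(p)\phi(q)^{-1}$), and that this extended $\phi$ still carries $\mathbb{Q}[t,t^{-1}]$ into $\mathbb{Q}\Lambda$.

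The key structural observation I would use is that $\mathbb{Q}\Lambda$ is a \emph{free} right $\mathbb{Q}[t,t^{-1}]$-module via $\phi$. Since $\Lambda$ is PTFA, it is torsion-free, so $\langle\alpha\rangle \equiv \phi(\mathbb{Z})$ is an infinite cyclic subgroup of $\Lambda$. Choosing any system $T$ of right coset representatives for $\langle\alpha\rangle$ in $\Lambda$ containing the identity $e$, the partition $\Lambda = \bigsqcup_{\tau \in T} \tau\langle\alpha\rangle$ produces the decomposition
$$
\mathbb{Q}\Lambda \;=\; \bigoplus_{\tau \in T} \tau \cdot \mathbb{Q}\langle\alpha\rangle \;\cong\; \bigoplus_{\tau \in T} \tau \cdot \mathbb{Q}[t,t^{-1}],
$$
which is exactly the statement that $T$ is a basis.

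Given this, the injectivity of $\ov\phi$ will follow by a direct coordinate comparison. Namely, if $\phi(x) = y \in \mathbb{Q}\Lambda$ for some $x = p q^{-1} \in \mathbb{Q}(t)$ with $q \neq 0$, then $\phi(p) = y\cdot\phi(q)$ in $\mathbb{Q}\Lambda$. Expanding $y = \sum_{\tau \in T} \tau \cdot y_\tau$ with finitely many $y_\tau \in \mathbb{Q}[t,t^{-1}]$ nonzero, and noting that $\phi(p) = e \cdot p$ lies entirely in the $e$-summand while right multiplication by $\phi(q)$ acts as multiplication by $q$ on each summand, comparison of coordinates yields $y_e \cdot q = p$ and $y_\tau \cdot q = 0$ for $\tau \neq e$. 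Since $\mathbb{Q}[t,t^{-1}]$ is a domain and $q \neq 0$, this forces $y_\tau = 0$ for $\tau \neq e$, and hence $x = y_e \in \mathbb{Q}[t,t^{-1}]$ as required. I do not anticipate a genuine obstacle here; the only point needing care is the recognition of the free-module structure, which depends essentially on the torsion-freeness of $\Lambda$, but no Ore-localization machinery or analytic input is needed.
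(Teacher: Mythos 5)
Your proof is correct and takes essentially the same route as the paper: both arguments rest on the observation that $\mathbb{Q}\Lambda$ is a free right $\mathbb{Q}[t,t^{-1}]$-module on coset representatives of the infinite cyclic subgroup $\phi(\mathbb{Z})\le\Lambda$, followed by a coordinate-by-coordinate comparison in the equation $\phi(p)=y\cdot\phi(q)$, using that $\mathbb{Q}[t,t^{-1}]$ is a domain. The only cosmetic difference is that you make explicit the role of torsion-freeness of $\Lambda$ in getting the embedding $\mathbb{Z}\hookrightarrow\Lambda$, which the paper leaves implicit in the PTFA hypothesis.
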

\begin{proof}[Proof of Lemma~\ref{lem:embedding}] Consider the monomorphism of groups $\mathbb{Z}\overset{\phi}\hookrightarrow \Lambda$, where we will abuse notation by setting $t\equiv \phi(t)$. In other words we will consider that $\mathbb{Z}\subset \Lambda$. Then the Lemma is equivalent to
$$
\mathbb{Q}(t)\cap \mathbb{Q}\Lambda\subset\mathbb{Q}[t,t^{-1}].
$$
Suppose
$$
p(t)/r(t)=x\in \mathbb{Q}\Lambda,
$$
where $r(t)\neq 0$. We seek to show that $x\in \mathbb{Q}[t,t^{-1}]$. Consider the equation
\begin{equation}\label{eq:embedding}
p(t)=x r(t)
\end{equation}
in $\mathbb{Q}\Lambda$. The key point is that since $\mathbb{Z}\subset \Lambda$, $\mathbb{Q}\Lambda$ is free as a right $\mathbb{Q}[t,t^{-1}]$-module on the left cosets of $\mathbb{Z}$ in $\Lambda$, i.e.
$$
\mathbb{Q}\Lambda\cong \oplus_{\text{ cosets}}\mathbb{Q}[t,t^{-1}].
$$
Thus for each coset representative $\gamma$ we can speak of the $\gamma$ coordinate of $x$, $x_\gamma$, which is the polynomial in $\mathbb{Q}[t,t^{-1}]$ occurring in the above decomposition of $x$. We can decompose $x$ as
$$
x=\Sigma_\gamma \gamma x_\gamma; \Rightarrow x r(t)=\Sigma_\gamma \gamma (x_\gamma r(t)).
$$
Equation~\ref{eq:embedding} is equivalent to a system of equations, one for each coset representative. For each $\gamma\neq e$ this equation is:
$$
0=x_\gamma(t)r(t)
$$
implying that $x_\gamma(t)=0$. Thus $x\in \mathbb{Q}[t,t^{-1}]$.
\end{proof}

Continuing, then we also have
\begin{equation}\label{eq:tensorup}
\mathcal{B}l_{\Lambda}^K(x\otimes 1,y\otimes 1)=\ov\phi(\mathcal{B}l^K_0(x,y))
\end{equation}
for any $x,y\in \mathcal{A}_0(K)$, where $\mathcal{B}l_0^K$ is the classical Blanchfield form on the rational Alexander module of $K$ ~\cite[Proposition 3.6]{Lei3}~\cite[Theorem 4.7]{Lei1} (see also ~\cite[Section 5.2.2]{Cha2}).

 The following is perhaps the key technical tool of the paper, that we use to establish certain ``injectivity'' as discussed in the first paragraph of this section. For the reader who is just concerned with proving that knots and links are not slice, replace the hypothesis below that ``$W$ is a rational $(k)$-solution for $M_L$'' with the hypothesis that ``$L$ is a slice link and $W$ is the exterior in $B^4$ of a set of slice disks for $L$''. Such an exterior is a rational $(k)$-solution for any $k$.

\begin{thm}\label{thm:nontriviality} Suppose $W$ is a rational $(k)$-bordism one of whose boundary components is $M_K$, $\Lambda$ is a PTFA group such that $\Lambda^{(k)}=1$, and $\psi:\pi_1(W)\to \Lambda$ is a coefficient system whose restriction to $\pi_1(M_K)$ is denoted $\phi$. Suppose that $\phi$ factors non-trivially through $\mathbb{Z}$. Let $P$ be the kernel of the composition
$$
\mathcal{A}_0(K)\overset{id\otimes 1}\lra  \mathcal{A}_0(K) \otimes_{\mathbb{Q}[t,t^{-1}]}\mathbb{Q}\Lambda \overset{i_*}{\to} H_1(M_K;\mathbb{Q}\Lambda)\overset{j_*}\to H_1(W;\mathbb{Q}\Lambda).
$$
Then $P\subset P^\perp$ with respect to $\mathcal{B}l_0$, the classical Blanchfield linking form on the rational Alexander module, $\mathcal{A}_0(K)$, of $K$.
\end{thm}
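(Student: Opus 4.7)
The plan is to reduce Theorem~\ref{thm:nontriviality} to Theorem~\ref{thm:selfannihil} by exploiting the compatibility, expressed in equation (\ref{eq:tensorup}), between the higher-order Blanchfield form $\mathcal{B}l_{\Lambda}^{K}$ on $TH_1(M_K;\mathbb{Q}\Lambda)$ and the classical Blanchfield form $\mathcal{B}l_0^K$ on the rational Alexander module. First I would apply Theorem~\ref{thm:selfannihil} to $W$ with the coefficient system $\psi:\pi_1(W)\to\Lambda$ and the Ore localization $\mathcal{R}=\mathbb{Q}\Lambda$, concluding that the kernel $Q$ of
$$
j_*:\ TH_1(\partial W;\mathbb{Q}\Lambda)\longrightarrow TH_1(W;\mathbb{Q}\Lambda)
$$
is self-annihilating with respect to $\mathcal{B}l^{\partial W}_{\mathbb{Q}\Lambda}$. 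The nontriviality of $\psi$ restricted to $\pi_1(M_K)$ follows from the hypothesis that $\phi$ factors nontrivially through $\mathbb{Z}$, and the rank hypothesis $\rank_{\mathbb{Z}\Lambda}H_1(M_K;\mathbb{Z}\Lambda)=\beta_1(M_K)-1=0$ is automatic since $\beta_1(M_K)=1$ (by Proposition 2.11 of \cite{COT}).

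Next I would verify that the composition
$$
\mathcal{A}_0(K)\xrightarrow{\,id\otimes 1\,}\mathcal{A}_0(K)\otimes_{\mathbb{Q}[t,t^{-1}]}\mathbb{Q}\Lambda\xrightarrow{\,i_*\,}H_1(M_K;\mathbb{Q}\Lambda)
$$
actually lands in $TH_1(M_K;\mathbb{Q}\Lambda)$. Indeed, any $x\in\mathcal{A}_0(K)$ is annihilated by some nonzero $p(t)\in\mathbb{Q}[t,t^{-1}]$ (for instance the Alexander polynomial), so $(x\otimes 1)\cdot p(\phi(t))=0$ in the tensor product; since $\phi:\mathbb{Q}[t,t^{-1}]\hookrightarrow\mathbb{Q}\Lambda$ is an embedding (this was used in Lemma~\ref{lem:embedding}) and $\mathbb{Q}\Lambda$ is a domain, $p(\phi(t))\neq 0$, so $x\otimes 1$ is $\mathbb{Q}\Lambda$-torsion.

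Now suppose $x,y\in P$. By definition, $j_*\circ i_*(x\otimes 1)=0=j_*\circ i_*(y\otimes 1)$, so the images of $x\otimes 1$ and $y\otimes 1$ in $TH_1(M_K;\mathbb{Q}\Lambda)\subset TH_1(\partial W;\mathbb{Q}\Lambda)$ lie in $Q$. Theorem~\ref{thm:selfannihil} then gives
$$
\mathcal{B}l_{\Lambda}^{K}(x\otimes 1,\,y\otimes 1)=0,
$$
and the compatibility formula (\ref{eq:tensorup}) identifies the left-hand side with $\overline{\phi}\bigl(\mathcal{B}l_0^K(x,y)\bigr)$. Since Lemma~\ref{lem:embedding} shows $\overline{\phi}:\mathbb{Q}(t)/\mathbb{Q}[t,t^{-1}]\hookrightarrow\mathcal{K}\Lambda/\mathbb{Q}\Lambda$ is injective, we conclude $\mathcal{B}l_0^K(x,y)=0$, proving $P\subset P^\perp$.

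The main technical obstacle is ensuring that the rank hypothesis of Theorem~\ref{thm:selfannihil} is met on \emph{all} boundary components of $W$ on which $\psi$ is nontrivial, not just on $M_K$. For the present theorem one should add this as a running assumption (or verify it in each application; in the cases relevant to the paper all boundary components will have $\beta_1=1$, so this is automatic). A secondary, but more routine, point is to make sure equation (\ref{eq:tensorup}) applies here: this is valid precisely because $\phi$ factors through the abelianization $\mathbb{Z}$ of $\pi_1(M_K)$, which is exactly the hypothesis we are given.
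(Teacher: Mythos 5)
Your proposal is correct and takes essentially the same route as the paper's own proof: apply Theorem~\ref{thm:selfannihil} with $\mathcal{R}=\mathbb{Q}\Lambda$ to the submodule generated by $i_*(x\otimes 1)$ and $i_*(y\otimes 1)$, then use the compatibility formula~(\ref{eq:tensorup}) together with the injectivity of $\ov\phi$ from Lemma~\ref{lem:embedding} to transfer the vanishing back to $\mathcal{B}l_0^K$. You are in fact slightly more careful than the paper on two subsidiary points: you explicitly check that the image of $\mathcal{A}_0(K)$ lands in the torsion submodule (so Theorem~\ref{thm:selfannihil} and the Blanchfield form actually apply to it), and you flag that Theorem~\ref{thm:selfannihil} requires the rank hypothesis on \emph{every} boundary component of $W$ where $\psi$ is nontrivial, not just on $M_K$; the paper's statement is silent on this, relying on the standing assumption elsewhere in the paper that all boundary components have $\beta_1=1$, which makes the hypothesis automatic via Proposition 2.11 of \cite{COT}.
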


\begin{proof}[Proof of Theorem~\ref{thm:nontriviality}] Suppose $x,y\in P$ as in the statement. Let $\mathcal{R}=\mathbb{Q}\Lambda$, $M=M_K$ and let $P$ be the submodule of $H_1(M;\mathbb{Q}\Lambda)$ generated by $\{i_*(x\otimes 1),i_*(y\otimes 1)\}$. Then $P\subset \text{kernel}~j_*$. Apply Theorem~\ref{thm:selfannihil} to conclude that
$$
\mathcal{B}l^{K}_{\Lambda}(i_*(x\otimes 1)),(i_*(y\otimes 1))=0.
$$
By ~\ref{eq:tensorup},
$$
\ov\phi(\mathcal{B}l_0^{K}(x,y))=0.
$$
Since $\ov\phi$ is a monomorphism by hypothesis, it follows that $\mathcal{B}l_0^K(x,y)=0$. Thus $P\subset P^\perp$ with respect to the classical Blanchfield form on $K$. This concludes the proof of Theorem~\ref{thm:nontriviality}.
\end{proof}

\section{Constructions of $(n)$-solvable knots}\label{sec:nsolvable}

In preparation for our proof that $\mathcal{F}_n/\mathcal{F}_{n.5}$ has infinite rank, we will exhibit large classes of knots that are $(n)$-solvable, including the knots $J_n(K)$, for any $J_0=K$ of Figure~\ref{fig:family}. Specifically we show that any knot obtained by starting with an Arf invariant zero knot and applying $n$ successive operators $R_i^{\eta_{ij}}$, where $R$ is a slice knot and the $\eta_{ij}$ are in the commutator subgroup, is $(n)$-solvable. In the proof of our main theorem we will need an $(n)$-solution with some special features, which we produce here.

\begin{thm}\label{thm:nsolvable} If $R_i$, $1\leq i\leq n$, are slice knots and $\eta_{ij}\in \pi_1(S^3-R_i)^{(1)}$ (where $\{\eta_{i1},...,\eta_{im_i}\}$ is a trivial link in $S^3$) then , abbreviating the operator $R_i^{\eta_{ij}}$ by $R_i$.
$$
R_n\circ\dots\circ R_2\circ R_1(\mathcal{F}_{0})\subset \mathcal{F}_{n}.
$$
More precisely, for any Arf invariant zero knot $K$, if we abbreviate $R_n\circ\dots\circ R_2\circ R_1(K)$ by $\mathcal{J}_n$ then the zero surgery on $\mathcal{J}_n$, denoted $M_n$, bounds an $(n)$-solution $Z_n$ with the following additional properties:
\begin{itemize}
\item [1.] $\pi_1(\partial Z_n)\to \pi_1(Z_n)$ is surjective;
\item [2.] for any PTFA coefficient system $\phi:\pi_1(Z_n)\to \G$ where $\G^{(n+1)}=1$
$$
\rho(M_n,\phi)=\sigma^{(2)}_\G(Z_n)-\sigma(Z_n)=c_\phi\rho_0(K)
$$
where $c_\phi$ is a non-negative integer bounded above by the product $m_1m_2...m_n$.
\end{itemize}
\end{thm}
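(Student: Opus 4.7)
The plan is to proceed by induction on $n$, constructing $Z_n$ inductively by capping off the non-distinguished boundary components of the infection cobordism $E$ produced by Lemma~\ref{lem:mickeyfacts}. For the base case $n=0$, Remark~\ref{rem:0solv} provides a $(0)$-solution $Z_0$ for $M_K$; since any PTFA group $\G$ with $\G^{(1)}=1$ is torsion-free abelian, any $\phi:\pi_1(Z_0)\to\G$ factors through $H_1(M_K)\cong\mathbb{Z}$, and Proposition~\ref{prop:rho invariants}(2,3,4) yields $\rho(M_K,\phi)=c_\phi\rho_0(K)$ with $c_\phi\in\{0,1\}$, consistent with the empty product bound.

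For the inductive step, assume the conclusion holds for $n-1$, so $\mathcal{J}_{n-1}=R_{n-1}\circ\cdots\circ R_1(K)$ has zero surgery $M_{n-1}$ bounding an $(n-1)$-solution $Z_{n-1}$ with the stated properties. Let $V_n\subset B^4$ be the exterior of a slice disk for $R_n$, and let $Z_{n-1}^{(j)}$ denote the $j$-th copy of $Z_{n-1}$ for $j\in\{1,\ldots,m_n\}$. Since $\mathcal{J}_n=R_n(\mathcal{J}_{n-1},\ldots,\mathcal{J}_{n-1})$ is obtained by infection, Lemma~\ref{lem:mickeyfacts} yields a cobordism $E$ with boundary $M_n\sqcup(-M_{R_n})\sqcup\bigsqcup_{j=1}^{m_n}(-M_{n-1}^{(j)})$. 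I would then define $Z_n$ to be the spin $4$-manifold obtained by gluing $V_n$ to $E$ along $M_{R_n}$ and each $Z_{n-1}^{(j)}$ to $E$ along $M_{n-1}^{(j)}$, so that $\partial Z_n=M_n$. Property (1) follows from Lemma~\ref{lem:mickeyfacts}(1), the fact that $\pi_1(V_n)$ is normally generated by the meridian of $R_n\subset M_{R_n}\subset E$, and the inductive surjectivity $\pi_1(M_{n-1})\to\pi_1(Z_{n-1})$; a Mayer-Vietoris argument, which uses that $\eta_{nj}\in\pi_1(S^3-R_n)^{(1)}$ dies in $H_1(V_n)$ to kill the $H_1$-contribution of each $Z_{n-1}^{(j)}$, then yields $H_1(M_n)\cong H_1(Z_n)\cong\mathbb{Z}$.

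The main obstacle is verifying that $Z_n$ is an $(n)$-solution. By induction each $Z_{n-1}^{(j)}$ admits an $(n-1)$-Lagrangian $L^{(j)}$ with $(n-1)$-duals $D^{(j)}$. The crucial observation is that because $\eta_{nj}\in\pi_1(S^3-R_n)^{(1)}\subset\pi_1(V_n)^{(1)}$, the identification of meridians in Lemma~\ref{lem:mickeyfacts}(4) sends the meridian $\mu_j$ of $\mathcal{J}_{n-1}\subset Z_{n-1}^{(j)}$ into $\pi_1(Z_n)^{(1)}$. Combined with the fact that $\mu_j$ normally generates $\pi_1(Z_{n-1}^{(j)})$, this forces the entire image of $\pi_1(Z_{n-1}^{(j)})\to\pi_1(Z_n)$ to lie in $\pi_1(Z_n)^{(1)}$, whence $\pi_1(Z_{n-1}^{(j)})^{(n-1)}$ lands in $(\pi_1(Z_n)^{(1)})^{(n-1)}=\pi_1(Z_n)^{(n)}$. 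Thus every $(n-1)$-surface in $Z_{n-1}^{(j)}$ lifts to an $(n)$-surface in $Z_n$, and by naturality of the equivariant intersection form the vanishing of $\lambda_{n-1}$ on $L^{(j)}$ implies vanishing of $\lambda_n$ on its image in $Z_n$, with $D^{(j)}$ furnishing $(n)$-duals; since surfaces in distinct $Z_{n-1}^{(j)}$, $Z_{n-1}^{(j')}$ can be made disjoint, $L=\sqcup_j L^{(j)}$ and $D=\sqcup_j D^{(j)}$ together form the required $(n)$-Lagrangian with $(n)$-duals. The delicate part is the rank count: a Mayer-Vietoris computation using $H_2(V_n)=0$, Lemma~\ref{lem:mickeysig} (so $H_2(E)$ comes from its boundary), and the fact that $I_0=0$ for $Z_n$ (property (1) together with $H_1$ being an isomorphism) confirms that $|L|=m_n k_{n-1}$ equals $\tfrac12\rk H_2(Z_n)$, where $k_{n-1}$ is the inductive Lagrangian size.

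Finally, additivity of the ordinary and $L^{(2)}$-signatures under the gluing decomposition gives
$$
\sigma^{(2)}_\G(Z_n)-\sigma(Z_n) = [\sigma^{(2)}_\G(V_n)-\sigma(V_n)] + [\sigma^{(2)}_\G(E)-\sigma(E)] + \sum_{j=1}^{m_n}[\sigma^{(2)}_\G(Z_{n-1}^{(j)})-\sigma(Z_{n-1}^{(j)})].
$$
The first bracket vanishes because $H_2(V_n)=0$, and the second by Lemma~\ref{lem:mickeysig}. For each $j$, since the restriction of $\phi$ to $\pi_1(Z_{n-1}^{(j)})$ has image in $\G^{(1)}$ and $(\G^{(1)})^{(n)}\subset\G^{(n+1)}=1$, the inductive hypothesis applies and yields $c_{\phi_j}\rho_0(K)$ with $c_{\phi_j}\le m_1\cdots m_{n-1}$. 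Summing and using $\rho(M_n,\phi)=\sigma^{(2)}_\G(Z_n)-\sigma(Z_n)$ gives $c_\phi=\sum_j c_{\phi_j}\le m_n(m_1\cdots m_{n-1})=m_1\cdots m_n$, completing the induction.
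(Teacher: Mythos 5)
Your proof is correct and follows essentially the same approach as the paper: build $Z_n$ inductively by gluing the infection cobordism $E$ of Lemma~\ref{lem:mickeyfacts} to a disk exterior for $R_n$ and to $m_n$ copies of $Z_{n-1}$, use $\eta_{nj}\in\pi_1(S^3-R_n)^{(1)}$ to push the inductive $(n-1)$-Lagrangians into $\pi_1(Z_n)^{(n)}$, verify the rank count via Mayer--Vietoris, and obtain property (2) from additivity of signature defects together with Lemma~\ref{lem:mickeysig}. One small, favorable deviation: you cap $M_{R_n}$ with a slice disk exterior, which is what the hypothesis actually provides (the paper writes ``ribbon disk'' though only slice is assumed), and you kill the $V_n$ signature defect directly from $H_2(V_n)=0$ rather than via Theorem~\ref{thm:oldsliceobstr}.
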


\begin{cor}\label{cor:nsolvable} For any Arf invariant zero knot $K=J_0$, each $J_n$ as in Figure~\ref{fig:family} is $(n)$-solvable. Moreover the zero surgery on $J_n$ bounds an $(n)$-solution $Z_n$ with the following additional properties:
\begin{itemize}
\item [1.] $\pi_1(\partial Z_n)\to \pi_1(Z_n)$ is surjective;
\item [2.] for any PTFA coefficient system $\phi:\pi_1(Z_n)\to \G$ where $\G^{(n+1)}=1$
$$
\rho(M_n(K),\phi)=\sigma^{(2)}_\G(Z_n)-\sigma(Z_n)=c_\phi\rho_0(K)
$$
where $c_\phi$ is an integer such that $0\leq c_\phi\leq 2^n$.
\end{itemize}
\end{cor}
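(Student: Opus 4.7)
The plan is to derive Corollary~\ref{cor:nsolvable} as a direct specialization of Theorem~\ref{thm:nsolvable}. Inspection of Figure~\ref{fig:family} shows that $J_n$ is precisely the $n$-fold iterate of the $R$-doubling operator $R_{\alpha,\beta}$ of Figure~\ref{fig:Rdoubling} applied to $J_0 = K$: each stage tucks a copy of the previous knot through each of the two bands of the $9_{46}$ ribbon presentation. So I would set $R_i = R = 9_{46}$ for $1 \le i \le n$ and take $\{\eta_{i1},\eta_{i2}\} = \{\alpha,\beta\}$ for every $i$, giving $m_i = 2$ for each $i$.

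Next I would verify the two hypotheses of Theorem~\ref{thm:nsolvable}. That each $R_i = R$ is slice is immediate since $9_{46}$ is even ribbon. That $\alpha,\beta \in \pi_1(S^3 - R)^{(1)}$ is the one elementary check: $\alpha$ and $\beta$ are band meridians for a ribbon presentation and bound embedded disks meeting $R$ in pairs of points with opposite algebraic intersection, so $lk(\alpha,R) = lk(\beta,R) = 0$. Equivalently, $\alpha,\beta$ are nullhomologous in $S^3 - R$, hence lie in the commutator subgroup of $\pi_1(S^3 - R)$.

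With these hypotheses in place, Theorem~\ref{thm:nsolvable} then delivers an $(n)$-solution $Z_n$ for $M_n(K)$ with $\pi_1(\partial Z_n) \to \pi_1(Z_n)$ surjective, which is property (1). Property (2) follows immediately from the signature formula of Theorem~\ref{thm:nsolvable},
$$
\rho(M_n(K),\phi) = \sigma^{(2)}_\Gamma(Z_n) - \sigma(Z_n) = c_\phi\,\rho_0(K),
$$
together with the bound $0 \le c_\phi \le m_1 m_2 \cdots m_n = 2^n$ obtained by substituting $m_i = 2$.

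There is essentially no obstacle here, since all of the technical work — the inductive construction of $Z_n$ by gluing together standard $(0)$-solutions for the $K$-pieces along the infection tori inside a model cobordism of the type $E$ appearing in Lemma~\ref{lem:mickeyfacts}, together with the control of the equivariant intersection form modulo higher-order rational derived subgroups — is already packaged inside Theorem~\ref{thm:nsolvable}. The only content specific to the corollary is the numerical bound $2^n$, which simply reflects that every stage of the iteration uses exactly two infection circles.
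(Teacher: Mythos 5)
Your proposal is correct and matches the paper's intent exactly: Corollary~\ref{cor:nsolvable} is obtained by specializing Theorem~\ref{thm:nsolvable} to $R_i = 9_{46}$ and $\{\eta_{i1},\eta_{i2}\} = \{\alpha,\beta\}$ with $m_i = 2$ for all $i$, and the bound $c_\phi \le 2^n$ is just $m_1\cdots m_n$ with $m_i=2$. The verification that $\alpha,\beta$ lie in the commutator subgroup (equivalently have zero linking number with $R$) is the only nontrivial check, and you handle it correctly.
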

\begin{proof}[Proof of Theorem~\ref{thm:nsolvable}] The proof is by induction on $n$. Suppose $n=0$ so $\mathcal{J}_n=K$ and $M_n=M_K$. Any Arf invariant zero knot $K$ admits a $(0)$-solution $Z_0$ such that $\pi_1\cong \mathbb{Z}$ so property $1$ holds (see Remark~\ref{rem:0solv} or ~\cite[Section 5]{COT2}). Then, since $\G$ is abelian if $n=0$, $\phi$ factors through $\mathbb{Z}$ and so $\rho(M_K,\phi)$ is either zero or equal to $\rho_0(K)$ (see part $4$ of Proposition~\ref{prop:rho invariants}). Thus the Theorem holds for $n=0$.

Now suppose that $Z_{n-1}$ exists satisfying the properties $1$ and $2$. We construct $Z_n$ as follows. Recall that, by definition, $\mathcal{J}_{n}=R_n(\mathcal{J}_{n-1})$ is obtained from $R_n$ by $m_n$ infections along the circles $\{\eta_{n1},...,\eta_{nm_n}\}$ using the knot $\mathcal{J}_{n-1}$ as the infecting knot in each case. Recall also from Lemma~\ref{lem:mickeyfacts} that there was a corresponding cobordism $E$ with $m_n+2$ boundary components: ~$M_{n}$, $M_{R_n}$ and $m_n$ copies of $M_{n-1}$ as shown in Figure~\ref{fig:mickey}. Beginning with $E$, cap off the $m_n$ boundary components with $m_n$ copies of $Z_{n-1}$ and cap off $M_{R_n}$ with $\mathcal{R}=B^4-\Delta$, the exterior of any ribbon disk $\Delta$ for $R_n$ as shown schematically in Figure~\ref{fig:zn}. The resulting manifold has a single copy of $M_n$ as boundary and is denoted $Z_n$.

\begin{figure}[htbp]
\setlength{\unitlength}{1pt}
\begin{picture}(180,200)
\put(0,0){\includegraphics{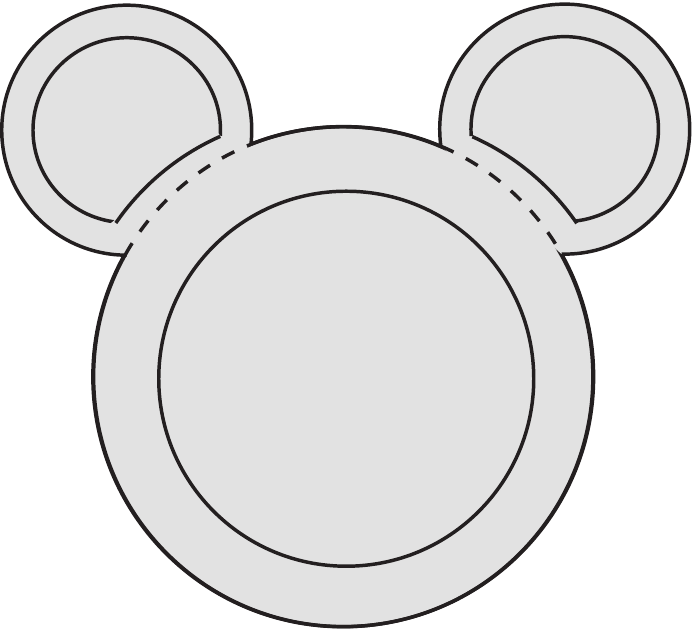}}
\put(27,145){$Z_{n-1}$}
\put(154,145){$Z_{n-1}$}
\put(96,66){$\mathcal{R}$}
\put(87,165){$M_{n-1}$}
\put(87,153){$\dots\dots$}
\put(80,168){\vector(-1,0){28}}
\put(123,168){\vector(1,0){25}}
\put(184,55){$M_{n}$}
\put(184,58){\vector(-1,0){12}}
\end{picture}
\caption{$Z_{n}$}\label{fig:zn}
\end{figure}

Property $1$ of Theorem~\ref{thm:nsolvable} follows from property $1$ for $Z_{n-1}(K)$ together with property $1$ of Lemma~\ref{lem:mickeyfacts}.

\textbf{$Z_n(K)$ is an $(n)$-solution}:

This will follow from a simple analysis of $H_2(Z_n;\mathbb{Z})$. We will drop the $\mathbb{Z}$ from the notation here for simplicity.

Recall from Lemma~\ref{lem:mickeyfacts} that
$$
H_2(E)\cong H_2(M_R)\oplus_{j=1}^{m_n} H_2(M_{n-1}).
$$
Since $Z_{n-1}$ is an $(n-1)$-solution, $H_1(M_{n-1})\to H_1(Z_{n-1})$ is an isomorphism. It follows from duality that $H_2(M_{n-1})\to H_2(Z_{n-1})$ is the zero map (a capped-off Seifert surface for $\mathcal{J}_{n-1}$ is a generator of the former and arises as the inverse image of a regular value under a map to a circle. Extend this map to $Z_{n-1}$ and pull back to get a bounding $3$-manifold). Therefore the Mayer-Vietoris sequence implies that
$$
H_2(E\cup_{j=1}^{m_n} Z_{n-1})\cong H_2(M_R)\oplus_{j=1}^{m_n} H_2(Z_{n-1}).
$$
The same facts apply to $M_R=\partial \mathcal{R}$ by Remark~\ref{rem:Rfacts} so
$$
H_2(Z_n)=H_2(E\cup_{j=1}^{m_n} Z_{n-1}\cup \mathcal{R})\cong \oplus_{j=1}^{m_n} H_2(Z_{n-1}),
$$
since $H_2(\mathcal{R})=0$.

Now, let $\{\ell_1^j,\dots,\ell_g^j\}$ be a collection of $(n-1)$-surfaces generating an $(n-1)$-Lagrangian for the $j^{th}$ copy, $Z_{n-1}^j$, of the $(n-1)$-solution $Z_{n-1}$ and $\{d_1^j,\dots,d_g^j\}$ a collection of $(n-1)$-surfaces that are $(n-1)$-duals. By our analysis of $H_2$, these collections, taken together for $1\leq j\leq m_n$, represent a basis for $H_2(Z_n)$ and so have the required \emph{cardinality} to generate an $n$-Lagrangian with $(n)$-duals for $Z_n$. By property $(1)$ of Theorem~\ref{thm:nsolvable} $\pi_1(Z_{n-1}^j)$ is normally generated by the meridian of the $j^{th}$ copy of $\mathcal{J}_{n-1}$. By definition of infection this meridian is equated to $\eta_{nj}$ in $E$. Since the $\eta_{nj}$ lie in the commutator subgroup of $\pi_1(M_{n-1})$ we see that $\pi_1(Z_{n-1}^j)$ maps into $\pi_1(Z_n)^{(1)}$. Thus $\pi_1(Z_{n-1}^j)^{(n-1})$ maps into $\pi_1(Z_n)^{(n)}$. Therefore the above $(n-1)$-surfaces for $Z_{n-1}^j$ are actually $(n)$-surfaces are for $Z_n$. By functoriality of the intersection form with twisted coefficients the union of these surfaces, over all $j$, also has the required intersection properties to generate an $n$-Lagrangian with $(n)$-duals for $Z_n$. Hence $Z_n$ is in fact an $(n)$-solution as was claimed.

\textbf{Property $2$ of Theorem~\ref{thm:nsolvable} for $Z_n$}:

\noindent Assume that $\phi:\pi_1(Z)\to \G$ where $\G^{(n+1)}=1$.  Recall that both $\sigma$ and $\sigma^{(2)}_\G$ are additive. By Lemma~\ref{lem:mickeysig}, both signatures vanish for $E$. By Theorem~\ref{thm:oldsliceobstr} both signatures vanish for $\mathcal{R}$. Therefore
$$
\sigma^{(2)}_\G(Z_n)-\sigma(Z_n)=\sum_{j=1}^{m_n}(\sigma^{(2)}_\G(Z_{n-1},\phi_j)-\sigma(Z_{n-1}))
$$
where $\phi_j$ is the induced coefficient system on the $j^{th}$ copy of $Z_{n-1}$. Let $\G_j$ be the image of $\phi_j$. By property $2$ of Proposition~\ref{prop:rho invariants} to compute $\sigma^{(2)}_\G(Z_{n-1},\phi_j)$ we may consider $\phi_j$ as a map into $\G_j$. We observed above that
each $\pi_1(Z_{n-1}^j)$ maps into $\pi_1(Z_n)^{(1)}$.  These $\G_j$ are subgroups of a PTFA group and hence are PTFA, and since $\G_j\subset \G^{(1)}$, $\G_j^{(n)}=1$.  Thus property $2$ of Theorem~\ref{thm:nsolvable} for $Z_{n-1}$ may be applied to $Z_{n-1}$ and $\phi_j:\pi_1(Z_{n-1}^j)\to\G_j$. Thus
$$
\sigma^{(2)}_\G(Z_n)-\sigma(Z_n)=\sum_{j=1}^{m_n}c_{\phi_j}\rho_0(K)=\rho_0(K)\sum_{j=1}^{m_n}c_{\phi_j}
$$
where $0\leq c_{\phi_j}\leq m_1m_2...m_{n-1}$. Property $2$ for $Z_n$ is thus established.

This concludes the proof of Theorem~\ref{thm:nsolvable}.
\end{proof}

\section{$\FF_n/\FF_{n.5}$ has infinite rank}\label{sec:infrank}

In this section we prove one of our main theorems.

\begin{thm}\label{thm:infgen} For any $n\ge0$, $\FF_n/\FF_{n.5}$ has infinite rank.
\end{thm}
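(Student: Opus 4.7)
The plan is to exhibit an infinite set of knots whose classes in $\mathcal{F}_n/\mathcal{F}_{n.5}$ are $\mathbb{Z}$-linearly independent by specializing Theorem~\ref{thm:main3}. Fix the slice knot $R=9_{46}$ together with the curves $\alpha,\beta$ of Figure~\ref{fig:Rdoubling}: these lie in $\pi_1(S^3-R)^{(1)}$ and generate the rational Alexander module of $R$, so the Blanchfield nontriviality condition of Theorem~\ref{thm:main3} holds. Choose a sequence of Arf-invariant-zero knots $K_1,K_2,\ldots$ with $|\rho_0(K_\ell)|$ growing so rapidly that
\[
|\rho_0(K_\ell)| \;>\; C \cdot \sum_{j<\ell}|\rho_0(K_j)|
\]
for every $\ell$, where $C$ is the constant supplied by Theorem~\ref{thm:main3}. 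Such sequences are standard (e.g.\ iterated connected sums of torus knots scaled appropriately). Set $\mathcal{J}_\ell = R_{\alpha,\beta}\circ\cdots\circ R_{\alpha,\beta}(K_\ell)$ ($n$-fold composition); by Corollary~\ref{cor:nsolvable} each $\mathcal{J}_\ell \in \mathcal{F}_n$.

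I would prove independence by contradiction. Suppose $\sum_\ell a_\ell[\mathcal{J}_\ell]=0$ in $\mathcal{F}_n/\mathcal{F}_{n.5}$ with at least one $a_\ell\neq 0$, and let $\ell_0$ be the largest such index. Then $J:=\#_\ell\, a_\ell\mathcal{J}_\ell$ bounds an $(n.5)$-solution $W$. The next step is to build a rational $(n.5)$-bordism $V$ whose boundary is $\coprod_\ell |a_\ell|\cdot M_{K_\ell}$, by gluing $W$ to the $(n)$-solutions $Z_n(K_\ell)$ of Corollary~\ref{cor:nsolvable} (with signs and multiplicities matching the $a_\ell$), after first interpreting the connect-sum boundary of $W$ as a boundary-connect-sum of the individual $M_{\mathcal{J}_\ell}$'s. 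The weaker notions of Definition~\ref{defn:rationalnbordism} and Definition~\ref{defn:n.5bordism} were set up precisely to accommodate this sort of disconnected, non-$H_1$-isomorphism boundary.

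Next I would apply Theorem~\ref{thm:sigvanishes} to $V$ with the coefficient system $\phi:\pi_1(V)\to\Gamma:=\pi_1(V)/\pi_1(V)^{(n+1)}_r$. Combined with Lemma~\ref{lem:additivity}, the additivity of $\sigma$ and $\sigma^{(2)}_\Gamma$, the vanishing of both signatures on the ribbon-disk exteriors of $R$ (Theorem~\ref{thm:oldsliceobstr}) and on the infection cobordisms $E$ (Lemma~\ref{lem:mickeysig}), and the signature computation of Corollary~\ref{cor:nsolvable} for each $Z_n(K_\ell)$, one obtains an identity of the form
\[
\sum_{\ell} a_\ell \, c_\ell \, \rho_0(K_\ell) \;=\; 0,
\]
where the $c_\ell$ are nonnegative integers bounded above by $2^n$.

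The main obstacle is showing $c_{\ell_0}\neq 0$; once this is done, the displayed equation together with the growth condition on $\rho_0(K_\ell)$ yields the contradiction. This coefficient counts, roughly, the number of copies of $M_{K_{\ell_0}}$ in $\partial V$ on which the restriction of $\phi$ is nontrivial, i.e.\ on which the meridian of $K_{\ell_0}$ survives in $\Gamma$. I would establish this by an $n$-fold induction applying Theorem~\ref{thm:nontriviality} at each level of the iterated doubling that produced $\mathcal{J}_{\ell_0}$: at the $k$-th stage, the images of the curves $\alpha,\beta$ (viewed inside $R$ at that stage) in $\pi_1(V)^{(k)}_r/\pi_1(V)^{(k+1)}_r$ generate a submodule $P$ of the rational Alexander module $\mathcal{A}_0(R)$, and Theorem~\ref{thm:nontriviality} forces $P\subset P^{\perp}$ with respect to $\mathcal{B}\ell_0^R$. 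Since $\mathcal{B}\ell_0^R(\alpha,\beta)\neq 0$, not both of $\alpha,\beta$ can die, so at least one infection curve at each level survives in the derived quotient, propagating nontriviality up to the meridian of $K_{\ell_0}$ at the top level and forcing $c_{\ell_0}\geq 1$. This is precisely the step where the unlocalized higher-order Blanchfield form of Section~\ref{Blanchfieldforms} replaces the Cheeger--Gromov analytic bound used in \cite{CT}.
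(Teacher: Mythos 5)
Your outline captures the broad architecture of the paper's argument (a signature identity forced by the assumed $(n.5)$-solution, combined with a Blanchfield-duality induction to show the top coefficient is nonzero), but it has a genuine gap that mirrors exactly the distinction between Theorem~\ref{thm:main3} and Theorem~\ref{thm:infgen}.

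The decisive error is the claimed identity $\sum_{\ell} a_\ell c_\ell \rho_0(K_\ell)=0$. The iterated $E$-cobordisms create boundary components $M_R$ at every level, and these can only be filled by a ribbon-disk exterior $\mathcal{R}$ if you already know \emph{which} of $\alpha,\beta$ dies in $\pi_1(W_i)$ at that level (only then does the coefficient system extend over the appropriate $\mathcal{R}$, so that Theorem~\ref{thm:oldsliceobstr} applies). Your appeal to Theorem~\ref{thm:nontriviality} only yields that at least one of $\alpha,\beta$ survives (Fact~2 of Proposition~\ref{prop:familyofmickeys}); it does not show that exactly one dies. The paper's Facts~3 and~4 establish exactly this, but the argument critically uses $\rho^1(R)\neq 0$, applied via the first-order signature computation of Example~\ref{ex:first-ordersigs}. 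Since $\rho^1(9_{46})$ is not known to be nonzero, the paper must perform the Case I/Case II dichotomy of Step~1 (tying a trefoil into one band when $\rho^1(9_{46})=0$). Taking $R=9_{46}$ blindly, as you do, leaves the $M_R$ terms uncontrolled: your right-hand side is not $0$ but a sum of $\rho(M_R,\phi^R)$ terms, and without Facts~3 and~4 their number grows like $2^n-1$ and each is only bounded by a Cheeger--Gromov constant.

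Even granting the incorrect identity, the growth hypothesis is the wrong kind of condition. The contribution of the $\mathcal{Z}$-caps for $K_\ell$ to the coefficient of $\rho_0(K_\ell)$ is bounded by $2^n|a_\ell|$, not $2^n$, and the integers $a_\ell$ are arbitrary; so for any fixed sequence of $\rho_0(K_\ell)$'s satisfying your inequality, a large enough $a_1$ defeats it. What the argument actually needs is a \emph{$\mathbb{Q}$-linear} independence statement, and in fact something slightly stronger: the paper's Step~2 chooses $\{K^j\}$ so that no nontrivial rational combination of $\{\rho_0(K^j)\}$ equals any rational multiple of $\rho^1(R)$, so that the final identity (a nontrivial $\mathbb{Q}$-combination equal to $0$ or $\rho^1(R)$) is impossible for \emph{every} choice of $(a_\ell)$. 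Your growth condition does not even imply $\mathbb{Q}$-linear independence of the $\rho_0(K_\ell)$'s. Finally, two smaller issues: gluing the full $(n)$-solutions $Z_n(K_\ell)$ to $W$ closes off the corresponding boundary components of $C$, it does not expose $\coprod|a_\ell|M_{K_\ell}$; and Theorem~\ref{thm:sigvanishes} requires a rational $(n+1)$-bordism, which an $(n.5)$-solution is not — the vanishing on $V$ should come from Theorem~\ref{thm:sliceobstr} directly.
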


\begin{proof}[Proof of Theorem~\ref{thm:infgen}] We give a procedure to construct an infinite set of knots in $\FF_n$ that is linearly independent in $\FF_n/\FF_{n.5}$.

\textbf{Step 1.} Find a genus one ribbon knot, $R$, such that $\rho^1(R)\neq 0$.

\noindent Let $R$ be the genus one ribbon knot shown in Figure~\ref{fig:exs_eta2}.
\begin{figure}[htbp]
\begin{picture}(153,165)
\put(0,0){\includegraphics{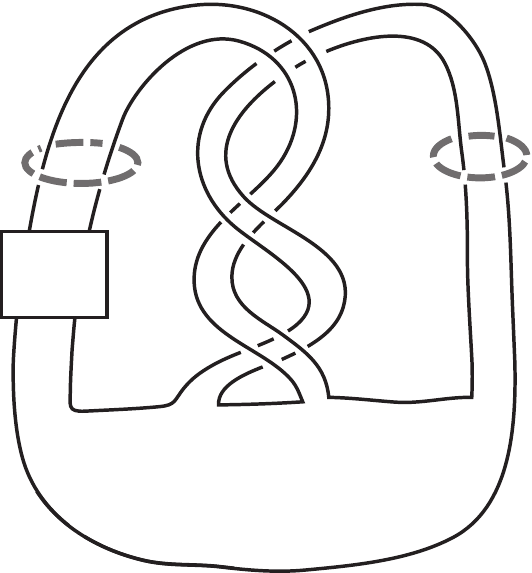}}
\put(12,83){$T^{\ast}$}
\put(-5,120){$\alpha$}
\put(159,121){$\beta$}
\end{picture}
\caption{The ribbon knot $R$}\label{fig:exs_eta2}
\end{figure}
There are two cases:

\textbf{Case I} $\rho^1(9_{46})\neq 0$.

\noindent In this case we define $T*$ to be the unknot, so $R=R_I=9_{46}$. Thus in this case $\rho^1(R)=\rho^1(9_{46})\neq 0$ and Step $1$ is complete.

\textbf{Case II} $\rho^1(9_{46})=0$.

\noindent In this case we define $T^*$ to be the right-handed trefoil knot, $T$. In this case we sometimes refer to $R$ as $R_{II}$, to distinguish the case. As in Example~\ref{ex:first-ordersigs}, we may apply Lemma~\ref{lem:additivity} to calculate
$$
\rho^1(R_{II})=\rho^1(9_{46})+\rho_0(T)=\rho_0(T).
$$
It is an easy calculation that $\rho_0(T)$ is non-zero. Thus $\rho^1(R)\neq 0$ and so Step $1$ holds in each case.

For the rest of the proof, we refer to the ribbon knot $R$ whenever the argument applies to both cases.  We split the argument into the two cases only when necessary.

\vspace{.25in}

\textbf{Step 2.} Find an infinite set $\mathcal{K}$ of Arf invariant zero knots $K^j$ such that no nontrivial rational linear combination of $\{\rho_0(K^j)\}$ is a rational multiple of $\rho^1(R)$. \\

This requirement is stronger than linear independence but is easily accomplished by the following elementary linear algebra. It was shown in ~\cite[Proposition 2.6]{COT2} that there exists an infinite set, $\bar {\mathcal{K}}=\{K^j\}$, of Arf invariant zero knots such that $\{\rho_0(K^j)\}$ is $\mathbb{Q}$-linearly independent. Let $V$ be the $\mathbb{Q}$-vector subspace of $\mathbb{R}$ with $\{v_j=\rho_0(K^j)\}$ as basis. If $\rho^1(R)$ is not in $V$ then set $\mathcal{K}=\bar{\mathcal{K}}$ and we are done. If $\rho^1(R)\in V$ then $\rho^1(R)$ has a unique expression as a nontrivial linear combination of the $v_j$. Let $\mathcal{K}$ be the subset of $\bar{\mathcal{K}}$ obtained by omitting one of the knots $K^{j}$ for which $v_j$ occurs nontrivially in this expression. Then $d$ is not in the span of $\mathcal{K}$ and we are done.

\vspace{.25in}

\textbf{Step 3.} For each fixed $n$ define a family of knots $\{J_n^j~|1\leq j\leq \infty\}\subset \FF_n$.\\

\noindent The families are defined recursively. Fix $j$ and set $J_0^j\equiv K^j$ and let $J_{n+1}^j$ be the knot obtained from the ribbon knot $R$ of Figure~\ref{fig:exs_eta2} by infection along the two band meridians $\{\alpha,\beta\}$ using the knot $J_{n}^j$ in each case, as shown in Figure~\ref{fig:familyJ_n^j}.

\begin{figure}[htbp]
\begin{picture}(160,165)
\put(13,0){\includegraphics{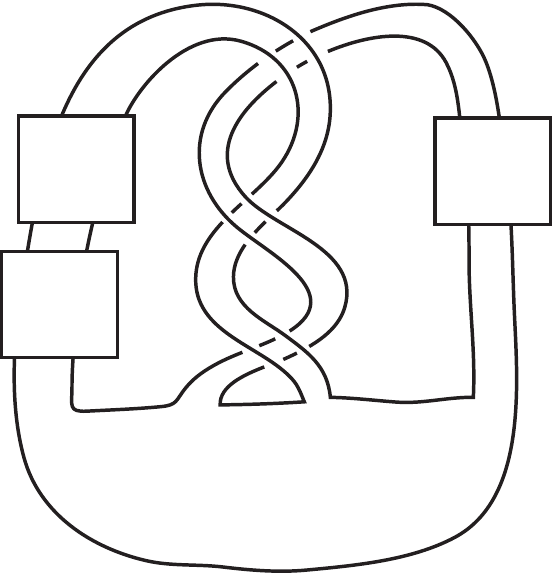}}
\put(25,74){$T^{\ast}$}
\put(21,113){$J_{n}^j$}
\put(142,113){$J_{n}^j$}
\put(-45,94){$J^j_{n+1}=$}
\end{picture}
\caption{The family of $(n)$-solvable knots $J^j_n$}\label{fig:familyJ_n^j}
\end{figure}
For any $n$ and $j$, $J_{n}^j$ is $(n)$-solvable by Theorem~\ref{thm:nsolvable}, so $J_{n}^j\in \FF_n$. This completes Step 3.

\vspace{.25in}

\textbf{Step $4$. No nontrivial linear combination of the knots $\{J_n^j~|~ 1\leq j\leq \infty\}$ is rationally $(n.5)$-solvable.}

The proof occupies the remainder of this section.
Here $n$ is fixed. We proceed by contradiction. Suppose that $\tilde J\equiv \#_{j=1}^\infty m_jJ^j_n$ (a finite sum) were rationally $(n.5)$-solvable. By re-indexing, without loss of generality we may assume that $m_1>0$. Under this assumption we shall construct a family of $4$-manifolds $W_i$ and reach a quick contradiction. Throughout we abbreviate the zero-framed surgery $M_{J^j_n}$ by $M^j_n$.

\begin{prop}\label{prop:familyofmickeys} Under the assumption that $\tilde J$ is rationally $(n.5)$-solvable, for each $0\leq i\leq n$ there exists a $4$-manifold $W_i$ with the following properties. Letting $\pi=\pi_1(W_i)$,

\begin{itemize}
\item [(1)] $W_i$ is a rational $(n)$-bordism where, for $i<n$, $\partial W_i=M^1_{n-i}$ and $\partial W_n=M^1_{0}\coprod M^1_{0}\coprod M_R$;
\item [(2)] Under the inclusion(s) $j:M_{n-i}^1\subset\partial W_i\to W_i$,
$$
j_*(\pi_1(M^1_{n-i}))\subset \pi^{(i)};
$$
and for each $i$ (at least one of the copies of) $M_{n-i}^1\subset \partial W_i$
$$
j_*(\pi_1(M^1_{n-i}))\cong \mathbb{Z}\subset \pi^{(i)}/\pi^{(i+1)}_r;
$$
and under the inclusion $j:M_R\subset\partial W_n\to W_n$
$$
j_*(\pi_1(M_R))\cong \mathbb{Z}\subset \pi^{(n-1)}/\pi_r^{(n)};
$$
\item [(3)] For any PTFA coefficient system $\phi:\pi_1(W_i)\to\G$ with $\G^{(n+1)}_r=1$
$$
\rho(\partial W_i,\phi)\equiv\sigma_\G^{(2)}(W_i,\phi)-\sigma(W_i)=-\sum_{j}C^j\rho_0(K^j)
$$
for some integers $C^j$ (depending on $\phi$) where $C^1\geq 0$.
\end{itemize}
\end{prop}

Before proving Proposition~\ref{prop:familyofmickeys}, we use it to finish the proof of Step $4$ and complete the proof of Theorem~\ref{thm:infgen}. Consider $W_{n}$ from Proposition~\ref{prop:familyofmickeys} with boundary $M^1_0\coprod M^1_0 \coprod M_R$. Recall that $M^1_0=M_{J^1_0}=M_{K^1}$. Let $\pi=\pi_1(W_n)$ and consider $\phi:\pi\to\pi/\pi^{(n+1)}_r$. Then by property $(3)$ of Proposition~\ref{prop:familyofmickeys} for $i=n$
\begin{equation}\label{eq:1}
\rho(M_{K^1},\phi_\alpha)+ \rho(M_{K^1},\phi_\beta)+\rho(M_R,\phi_R)=\rho(\partial W_n,\phi)=-\sum_{j}C^j\rho_0(K^j)
\end{equation}
where $C^1\geq 0$. By property $(2)$ of Proposition~\ref{prop:familyofmickeys}
$$
j_*(\pi_1(M_{K^1})\subset \pi^{(n)},
$$
implying that the restrictions $\phi_\alpha$ and $\phi_\beta$ factor through the respective abelianizations. Additionally by property $(2)$, at least one of these coefficient systems is non-trivial. Hence by $(2)-(4)$ of Proposition~\ref{prop:rho invariants}
$$
\rho(M_{K^1},\phi_\alpha)+ \rho(M_{K^1},\phi_\beta)=\epsilon\rho_0(K^1)
$$
where $\epsilon$ equals either $1$ or $2$. Thus we can simplify ~\ref{eq:1} to yield
\begin{equation}\label{eq:second}
(\epsilon+C^1)\rho_0(K^1)+\sum_{j>1}C^j\rho_0(K^j)=-\rho(M_R,\phi_R)
\end{equation}
where $\epsilon+C^1\geq 1$.
 Also by property $(2)$ of Proposition~\ref{prop:familyofmickeys},
\begin{equation}\label{eq:M_R}
j_*(\pi_1(M_R))\subset \pi^{(n-1)},
\end{equation}
so $\phi_R$ factors through $G/G^{(2)}$ where $G=\pi_1(M_R)$. We claim that kernel($\phi_R)\subset G^{(1)}$. For suppose that $x\in \text{kernel}(\phi_R)$ and $x=\mu^my$ where $\mu$ is a meridian of $R$ and $y\in G^{(1)}$. Then certainly $x$ is in the kernel of the composition
$$
\psi:G\overset{\phi_R}\longrightarrow \pi^{(n-1)}/\pi^{(n+1)}_r\to\pi^{(n-1)}/\pi^{(n)}_r.
$$
Moreover, by ~\ref{eq:M_R}, $\phi_R(G^{(1)})\subset \pi^{(n)}_r$ so $G^{(1)}$ is in the kernel of $\psi$. Therefore $\mu^m\in \ker\psi$ and the image of $\psi$ has order at most $m$. If $m\neq 0$ this contradicts the last clause of property $2$ of Proposition~\ref{prop:familyofmickeys}. Thus $m=0$ and kernel($\phi_R)\subset G^{(1)}$. Therefore $\psi$ is determined by the kernel, $P$, of
$$
\bar\phi_R:G^{(1)}/G^{(2)}\cong \mathcal{A}_0(R)\to \text{image}(\phi_R).
$$
Since $P$ is normal in $G/G^{(2)}$, it is preserved under  conjugation by a meridional element, implying that $P$ is a \emph{submodule} of $\mathcal{A}_0(R)$.  Since the Alexander polynomial of $R$ is $(2t-1)(t-2)$, the product of two irreducible coprime factors, $\mathcal{A}_0(R)$ admits precisely $4$ submodules: $P_1=\mathcal{A}_0(R)$, $P_0=0, P_\alpha=<\alpha>$ and $P_\beta=<\beta>$. In the first case $\bar\phi_R$ is the zero map so $\phi_R$ factors through $\mathbb{Z}$ and $\rho(M_R,\phi)=\rho_0(R)=0$. Otherwise $\rho(M_R,\phi_R)$ is what we have called a first order signature of $R$. To analyze the remaining $3$ possibilities for $\rho(M_R,\phi_R)$ it is simplest to take the viewpoint that $R$ is obtained from $9_{46}$ by one infection along $\alpha$ using the knot $T^*$. We can then analyze the $3$ possible first-order signatures as in Example~\ref{ex:first-ordersigs},
$$
\rho(M_R,P_\alpha)=\rho(9_{46},P_\alpha)=0,
$$
$$
\rho(M_R,P_\beta)=\rho(9_{46},P_\beta)+\rho_0(T*)=\rho_0(T^*)
$$
$$
\rho(M_R,P_0)=\rho^1(R).
$$
In Case I, $\rho_0(T^*)=0$. Thus in Case I
$$
\rho(M_R,\phi_R)\in \{0,0,0,\rho^1(R)\}
$$
according to the $4$ possibilities for $P$. In the Case II, $\rho^1(9_{46})=0$ so
$$
\rho^1(R)=\rho^1(R_{II})=\rho^1(9_{46})+\rho_0(T^*).
$$
\noindent Thus in all cases we can say that
$$
\rho(M_R,\phi_R)\in \{0,\rho^1(R)\}.
$$
\noindent Combining this with ~\ref{eq:second} we have
$$
(\epsilon+C^1)\rho_0(K^1)+\sum_{j>1}C^j\rho_0(K^j)=C_0\rho^1(R)
$$
where $C_0\in \{0,1\}$. Since $\epsilon+C^1>0$  we have expressed a non-trivial  linear combination of $\{\rho_0(K^j)\}$ as a multiple of $\rho^1(R)$ contradicting our choice of $\{K^j\}$.

This contradiction finishes the proof of Theorem~\ref{thm:infgen} modulo the proof of Proposition~\ref{prop:familyofmickeys}.
\end{proof}

\begin{proof}[Proof of Proposition~\ref{prop:familyofmickeys}] We give a recursive definition of $W_i$.

First we define $W_0$. Let $V$ be a rational $(n.5)$-solution for $\tilde{J}$. Let $C$ be the standard cobordism from $M_{\tilde J}$ to the disjoint union of $m_j$ copies of $M_n^j$. Specifically
$$
\partial C= -M_{\tilde{J}}\coprod_jm_jM_n^j,
$$
where if $m_j<0$ we mean $|m_j|$ copies of $-M_n^j$. This cobordism is discussed in detail in ~\cite[p.113-116]{COT2}. Alternatively, note that $\tilde{J}$ may be constructed by starting from $J_n^1$ and infecting along different meridians a total of $((m_1-1)+\sum _{j=2}|m_j|)$ times using the knot sign$(m_j)J_n^j$ a total of $|m_j|$ times ($m_1-1$ times if $j=1$). Thus $C$ can be viewed as an example of the cobordism $E$ defined in Figure~\ref{fig:mickey}. Identify $C$ with $V$ along $M_{\tilde{J}}$.  Then cap off all of its boundary components except one copy of $M_n^1$ using copies of the special $(n)$-solutions $\pm Z_n^j$ as provided by Theorem~\ref{thm:nsolvable}. The latter shall be called \textbf{$\mathcal{Z}$-caps}. Here there is a technical point concerning orientations: if $m_j>0$ then to the boundary component $M_n^j$ we must glue a copy of $-Z^j_n$ (and vice-versa). It was important in the proof that we remember that since $m_1>0$, all the occurrences of $j=1$ $\mathcal{Z}$-caps are copies of $-Z_n^1$ rather than $Z_n^1$. Let the result be denoted $W_0$ as shown schematically in Figure~\ref{fig:w0}.
\begin{figure}[htbp]
\setlength{\unitlength}{1pt}
\begin{picture}(108,106)
\put(0,0){\includegraphics{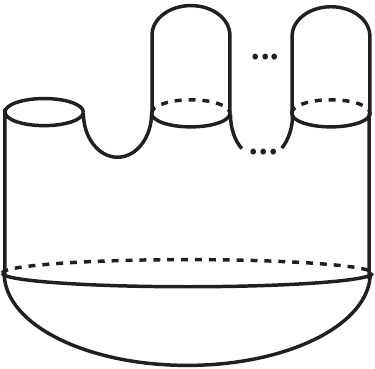}}
\put(50,42){$C$}
\put(50,10){$V$}
\put(89,85){$Z_{n}^{j_k}$}
\put(48,85){$Z_{n}^{j_1}$}
\put(-28,24){$M_{\tilde{J}}$}
\put(-10,27){\vector(1,0){9}}
\put(-28,71){$M_{n}^{1}$}
\put(-10,74){\vector(1,0){9}}
\end{picture}
\caption{$W_{0}$}\label{fig:w0}
\end{figure}

We will now show that $W_0$ is a rational $(n)$-solution for $M_n^1$ (hence a rational $(n)$-bordism). Since $V$ is a rational $(n.5)$-solution for $M_{\tilde J}$, the inclusion-induced map
$$
j_*:~H_1(M_{\tilde J};\mathbb{Q})\to H_1(V;\mathbb{Q})
$$
is an isomorphism. It follows from duality that
$$
j_*:~H_2(M_{\tilde J};\mathbb{Q})\to H_2(V;\mathbb{Q})
$$
is the zero map. Therefore if we examine the Mayer-Vietoris sequence with $\mathbb{Q}$-coefficients,
$$
H_2(M_{\tilde{J}}) \overset{i_*}{\lra} H_2(C)\oplus H_2(V)\overset{\pi_*}{\lra} H_2(C\cup V)\to H_1(M_{\tilde J})\overset{(i_*,j_*)}{\longrightarrow}H_1(C)\oplus H_1(V),
$$
we see that $\pi_*$ induces an isomorphism
$$
(H_2(C)/i_*(H_2(M_{\tilde J})))\oplus H_2(V)\cong H_2(C\cup V).
$$
The integral homology of $C$ was analyzed in ~\cite[p. 113-114]{COT2} and also in Lemma~\ref{lem:mickeyfacts}. From the latter we know that $H_1(C;\mathbb{Q})\cong \mathbb{Q}$, generated by any one of the meridians of any of the knots, and that $H_2(C;\mathbb{Q})$ is $\oplus_j H_2(M^j_n;\mathbb{Q})^{|m_j|}$. In particular $H_2(C)$ arises from its ``top'' boundary. Also the generator of $i_*(H_2(M_{\tilde J}))$ is merely the sum of the generators of the $H_2(M_n^j;\mathbb{Q})$ summands. Thus
$$
H_2(C\cup V;\mathbb{Q})\cong (\mathbb{Q}^m/<1,...,1>) \oplus H_2(V;\mathbb{Q})
$$
where $m=\sum_j |m_j|$ and the generators of the $\mathbb{Q}^m$ come from the ``top'' boundary components of $C$. Moreover $H_1(C\cup V;\mathbb{Q})\cong \mathbb{Q}$ generated by any one of the meridians. Since the $Z_n^j$ are $(n)$-solutions, $H_1(M_n^j)\to H_1(Z_n^j)$ is an isomorphism and by duality $H_2(M_n^j)\to H_2(Z_n^j)$ is the zero map. Thus adding a $\mathcal{Z}$-cap to $C\cup V$ has no effect on $H_1$; while the effect on $H_2$ of adding a $\mathcal{Z}$-cap to $C\cup V$ is to kill the class carried by $\partial Z_n^j=M^j_n$ and to add $H_2(Z_n^j)$. Thus combining these facts we have that
$$
H_2(W_0;\mathbb{Q})/j_*(H_2(\partial W_0;\mathbb{Q}))\cong H_2(W_0;\mathbb{Q})\cong H_2(V;\mathbb{Q})\oplus_{\mathbb{\mathcal{Z}}-caps}H_2(Z_n^j).
$$
and
$$
H_1(W_0;\mathbb{Q})\cong H_1(M_n^1;\mathbb{Q})\cong \mathbb{Q}.
$$
Now, continuing with the verification that $W_0$ is a rational $(n)$-solution, recall that $V$ is a rational $(n.5)$-solution hence a rational $(n)$-solution. Let $\{\ell_1,\dots,\ell_g\}$ be a collection of $n$-surfaces generating a rational $n$-Lagrangian for $V$ and $\{d_1,\dots,d_g\}$ be a collection of $(n)$-surfaces that are the rational $(n+1)$-duals. Since $\pi_1(V)^{(n)}$ maps into $\pi_1(W_0)^{(n)}$, these surfaces are also $(n)$-surfaces for $W_0$. Each $\mathcal{Z}$-cap $Z_n^j$ is a rational $(n)$-solution, so let $\{\ell_1',\dots,\ell_{g'}'\}$ and $\{d_1',\dots,d_{g'}'\}$ denote collections of $(n)$-surfaces generating a rational $(n)$-Lagrangian and rational $(n)$-duals for $Z_n^j$. These surfaces are also $(n)$-surfaces for $W_0$.
By our analysis of $H_2(W_0)$ above, the unions of these collections, for $V$ and for each $\mathcal{Z}$-cap, have the required cardinality to generate a rational $(n)$-Lagrangian with rational $(n)$-duals for $W_0$.
By naturality of the intersection form with twisted coefficients these surfaces also have the required intersection properties to generate a rational $n$-Lagrangian with rational $(n)$-duals for $W_0$. Hence $W_0$ is in fact a rational $(n)$-solution for $M_n^1$ as was claimed. This establishes property $1$ of Proposition~\ref{prop:familyofmickeys} for $W_0$.

In the case $i=0$ , property $2$ is merely the statement that the inclusion $H_1(M_n^1;\mathbb{Q})\to H_1(W_0;\mathbb{Q})$ is injective, which we have already observed is true.

The proof of Proposition~\ref{prop:familyofmickeys} is easier if we inductively prove a more robust version of property $3$.
\vspace{.25in}

\noindent\textbf{Property $3^\prime$}: $W_0\subset W_i$ and for any PTFA coefficient system $\phi:\pi_1(W_i)\to\G$ with $\G^{(n+1)}=1$
$$
\rho(\partial W_i,\phi)=\sigma^{(2)}(W_i,\phi)-\sigma(W_i)=-\sum_{\mathcal{Z}-caps}sign(|m_j|)(\sigma^{(2)}(Z_n^j,\phi_j)-\sigma(Z_n^j))
=-\sum_{j}C^j\rho_0(K^j)
$$
for some integers $C^j$ (depending on $\phi$) where $C^1\geq 0$.

\vspace{.25in}

Theorem~\ref{thm:nsolvable} establishes the last equality in property $3'$ since $m_1>0$. Since $W_0=V\cup C\cup \mathcal{Z}$-caps, by additivity of signatures, property $3'$ will hold for $W_0$ if the difference of signatures vanishes for $V$ and for $C$. Since $V$ is an $(n.5)$-solution, the vanishing for $V$ follows directly from Theorem~\ref{thm:sliceobstr}. The vanishing of the signatures for $C$ follows from Lemma~\ref{lem:mickeysig} (see also ~\cite[Lemma 4.2]{COT2}). Thus we have constructed $W_0$ and Proposition~\ref{prop:familyofmickeys} holds for $i=0$.

Now assume that $W_i$, $i\leq n-1$, has been constructed satisfying the properties above. Before defining $W_{i+1}$ we derive some important facts about $W_i$.  By property $(1)$, $\partial W_i=M_{n-i}^1$ and $W_i$ is a rational $(n)$-bordism and hence a rational $(i+1)$-bordism since $i+1\leq n$. Let $\pi=\pi_1(W_i)$, $\Lambda=\pi/\pi^{(i+1)}_r$ and let $\phi:\pi\to \Lambda$ be the canonical surjection. We will apply Theorem~\ref{thm:selfannihil} to $W_i$ with $k=i+1$. Property $2$ for $W_i$ ensures that $\phi$ restricted to $M_{n-i}^1$ is non-trivial. We conclude that the kernel, $\bar P$ of the composition
$$
H_1(M_{n-i}^1;\mathbb{Q}\Lambda)\overset{j_*}\to H_1(W_i;\mathbb{Q}\Lambda),
$$
satisfies $\bar P\subset \bar P^\perp$ with respect to the  Blanchfield form $\mathcal{B}\ell_\Lambda$ on $M^1_{n-i}$. But property $2$ also ensures that $\phi$ restricted to $\pi_1(M_{n-i}^1)$ factors through the abelianization. Hence by Theorem~\ref{thm:nontriviality} the kernel $P$ of the composition
$$
\mathcal{A}_0(J^1_{n-i})\overset{i}{\hookrightarrow}\mathcal{A}_0(J^1_{n-i}) \otimes\mathbb{Q}\Lambda\overset{\cong}{\to} H_1(M^1_{n-i};\mathbb{Q}\Lambda)\overset{j_*}\to H_1(W_i;\mathbb{Q}\Lambda),
$$
satisfies $P\subset P^\perp$ with respect to the classical Blanchfield form on $J^1_{n-i}$. Recall that, by definition, $J^1_{n-i}$ is obtained from $R$ by two infections along the circles labelled $\alpha$ and $\beta$ as in Figure~\ref{fig:exs_eta2}. These two circles form a generating set $\{[\alpha],[\beta]\}$ for $\mathcal{A}_0(J^j_{n-i})$ (which is isomorphic to $\mathcal{A}_0(R)$ and hence nontrivial). From this we can conclude that at least one of these generators is \emph{not} in $P$ since the classical Blanchfield form of any knot is nonsingular. Now consider the commutative diagram below. Recall that $H_1(W_i;\mathbb{Q}\Lambda)$ is identifiable as the ordinary rational homology of the covering space of $W$ whose fundamental group is the kernel of $\phi:\pi\to \Lambda$. Since this kernel is precisely $\pi^{(i+1)}_r$, we have that
$$
H_1(W_i;\mathbb{Q}\Lambda)\cong (\pi^{(i+1)}_r/[\pi^{(i+1)}_r,\pi^{(i+1)}_r])\otimes_\mathbb{Z} \mathbb{Q}
$$
as indicated in the diagram below. The vertical map $j$ is injective as shown in Section~\ref{J2}. Furthermore, since $\{\alpha,\beta\}\subset \pi_1(M^1_{n-i})^{(1)}$,  by property $2$
$$
\pi_1(M^1_{n-i})^{(1)}\subset \pi^{(i+1)}.
$$
Since the composition in the bottom row sends one of $\{[\alpha],[\beta]\}$ to non-zero, the composition in the top row sends at least one of $\{\alpha,\beta\} $ to non-zero.
\begin{equation*}
\begin{CD}
\pi_1(M^1_{n-i})^{(1)}      @>\cong>>    \pi_1(M^1_{n-i})^{(1)}  @>j_*>>   \pi^{(i+1)}  @>>>
\pi^{(i+1)}_r/\pi^{(i+2)}_r \\
  @VVV   @VVV        @VVV       @VVjV\\
\mathcal{A}_0(J^1_{n-i})     @>i>>  H_1(M^1_{n-i};\mathbb{Q}\Lambda)    @>j_*>> H_1(W_i;\mathbb{Q}\Lambda) @>\cong>>
  (\pi^{(i+1)}_r/[\pi^{(i+1)}_r,\pi^{(i+1)}_r])\otimes_\mathbb{Z} \mathbb{Q}\\
\end{CD}
\end{equation*}

Therefore we have established these crucial facts about $W_i$:
\begin{itemize}
\item [\textbf{Fact $1$}:] Each of $\{\alpha,\beta\}$ maps into $\pi_1(W_i)^{(i+1)}$
\item [\textbf{Fact $2$}:] The kernel, $\tilde{P}$,  of the the composition (top row of the diagram above)
$$
\pi_1(M^1_{n-i})^{(1)}\to \pi_1(W_i)^{(i+1)}_r/\pi_1(W_i)^{(i+2)}_r
$$
is of the form $\pi^{-1}(P)$ for some submodule $P\subset \mathcal{A}_0(J^1_{n-i})$ such that $P\subset P^\perp$ with respect to the classical Blanchfield form and at least one of $\{\alpha,\beta\}$ maps non-trivially under this map.
\end{itemize}
Now we claim further that
\begin{itemize}
\item [\textbf{Fact $3$}:] If $i\leq n-2$, precisely one of $\{\alpha,\beta\}$ maps non-trivially under the map in Fact $2$.
\item [\textbf{Fact $4$}:] If $i\leq n-2$, without loss of generality we may assume that $\beta$ maps non-trivially and $\alpha$ maps trivially under the above map.
\end{itemize}
To establish Facts $3$ and $4$ assume $i\leq n-2$ and consider the coefficient system
$$
\phi:\pi_1(M_{n-i}^1)\to \G=\pi_1(W_i)/\pi_1(W_i)_r^{(i+2)}.
$$
Note that $\G^{(n+1)}=1$ since $i+2\leq n+1$. By property $3'$ for $W_i$,
$$
\rho(M_{n-i}^1,\phi)=\sigma^{(2)}(W_i,\phi)-\sigma(W_i)=\sum_{\mathcal{Z}-caps}\pm(\sigma^{(2)}(Z_n^j,\phi_j)-\sigma(Z_n^j)).
$$
But the $Z_n^j$ are $(n)$-solutions and thus are $(i+1.5)$-solutions since $i+1.5\leq n$. Hence, by Theorem~\ref{thm:sliceobstr}, all these signature defects are zero. Thus
$$
\rho(M_{n-i}^1,\phi)=0.
$$
Moreover by property $(2)$ for $W_i$, $\phi(\pi_1(M_{n-i}^1))\subset \pi_1(W_i)^{(i)}$. Therefore $\phi$ restricted to $\pi_1(M^1_{n-i})$ factors through $\pi_1(M^1_{n-1})/\pi_1(M^1_{n-1})^{(2)}$. In other words (using Fact 2 above) one of the first-order signatures of $J^1_{n-i}$ is zero. Assume that both $\alpha$ and $\beta$ mapped nontrivially. Since the Alexander module of $J^1_{n-i}$ is isomorphic to that of the $9_{46}$ knot, by Example~\ref{ex:first-ordersigs}, this first-order signature would necessarily be what we have called $\rho^1(J^1_{n-1})$. Since $J^1_{n-1}$ is obtained from $R$ by two infections using $J^1_{n-i-1}$ as the infecting knot, as in Example~\ref{ex:first-ordersigs},
$$
0=\rho(M_{n-i}^1,\phi)=\rho^1(J^1_{n-1})=\rho^1(R)+\rho_0(J^1_{n-i-1})+\rho_0(J^1_{n-i-1}).
$$
However, by choice $\rho^1(R)\neq 0$ and, since $n-i-1\geq 1$, $J^1_{n-i-1}$ is $(0.5)$-solvable by Theorem~\ref{thm:nsolvable} and so $\rho_0(J^1_{n-i-1})=0$ by Theorem~\ref{thm:sliceobstr}. This contradiction implies Fact $3$. To show Fact $4$, suppose $R=R_{II}$, $\alpha$ maps nontrivially and $\beta$ maps trivially. Viewing $J^1_{n-1}$ as obtained from $9_{46}$ by two infections using $J^1_{n-i-1}$ as one infecting knot and $T\#~J^1_{n-i-1}$ as the other, then by
Example~\ref{ex:first-ordersigs}
$$
0=\rho(M_{n-i}^1,\phi)=\rho_0(J^1_{n-i-1})+\rho_0(T\#~J^1_{n-i-1})=\rho_0(T)\neq 0.
$$
This contradiction implies Fact $4$ in this case. In the case that $R=R_I$ then  $J^1_{n-1}$ is symmetric with respect to $\alpha$ and $\beta$ so we may assume Fact $4$ by relabelling if necessary.

Finally we can give the construction of $W_{i+1}$. Refer to Figure~\ref{fig:Wi}. Since $J^1_{n-1}$ is obtained from $R$ by two infections using $J^1_{n-i-1}$ as the infecting knot, there is a corresponding cobordism $E$ with $4$ boundary components $M_{n-i}$, $M_{R}$ and two copies of $M^1_{n-i-1}$. Glue this to $W_i$ along $M_{n-i}^1$, as in the top-most portion of Figure~\ref{fig:Wi}. If $i=n-1$ then we set $W_n=W_{n-1}\cup E$ and we are done. Note that $\partial W_n$ consists of two copies of $M^1_0$ and one copy of $M_R$ as required by property $1$. Now consider the case that $i\leq n-2$ (in which case Fact $4$ holds). Notice that $R$ is a ribbon knot and admits a ribbon disk $\Delta$ that is obtained by ``cutting the $\alpha$ band''. Let $\mathcal{R}$ denote the exterior in $B^4$ of this ribbon disk.
\begin{figure}[htbp]
\setlength{\unitlength}{1pt}
\begin{picture}(363,263)
\put(1,0){\includegraphics{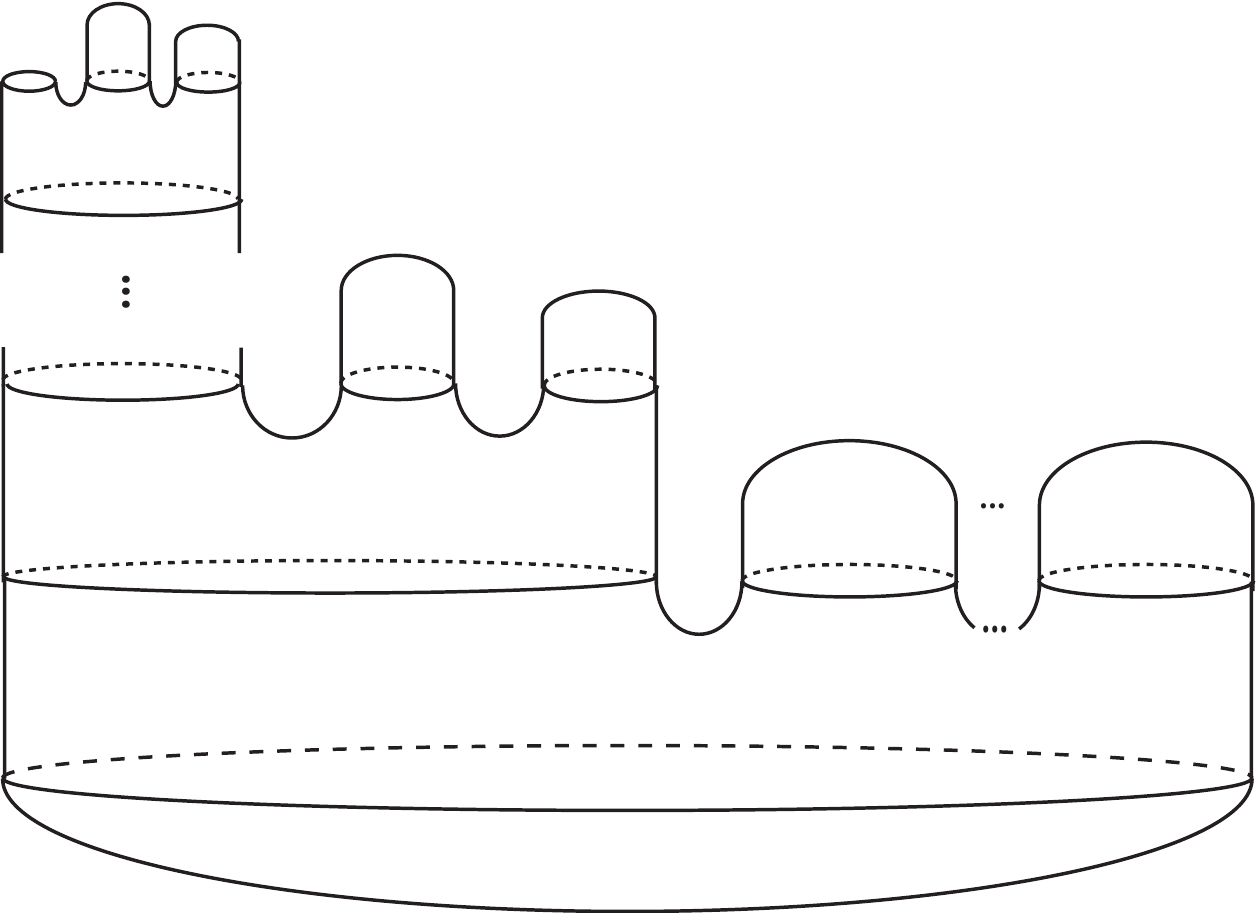}}
\put(183,14){$V$}
\put(183,61){$C$}
\put(99,115){$E$}
\put(241,115){$Z_{n}^{j_{1}}$}
\put(325,115){$Z_{n}^{j_{k}}$}
\put(110,167){$\mathcal{N}_{0}$}
\put(170,163){$\mathcal{R}$}
\put(57,246){$\mathcal{R}$}
\put(30,248){$\mathcal{N}_{i}$}
\put(-28,35){$M_{\tilde{J}}$}
\put(-12,38){\vector(1,0){12}}
\put(-30,93){$M_{n}^{1}$}
\put(-12,97){\vector(1,0){12}}
\put(-30,149){$M_{n-1}^{1}$}
\put(-12,153){\vector(1,0){12}}
\put(-35,236){$M_{n-i-1}^{1}$}
\put(-17,240){\vector(1,0){17}}
\put(-30,202){$M_{n-i}^{1}$}
\put(-12,206){\vector(1,0){12}}
\put(31,218){$E$}
\end{picture}
\caption{$W_{i+1}$}\label{fig:Wi}
\end{figure}
\noindent Cap off the $M_R$ boundary component of $W_i\cup E$ using $\mathcal{R}$. This will be called an $\mathcal{R}$-cap. Recall that $M^1_{n-i-1}$ is the boundary of a special $(n-i-1)$-solution $Z^1_{n-i-1}$ as in Theorem~\ref{thm:nsolvable}. Use this to cap off the $M^1_{n-i-1}$ boundary component of $W_i\cup E\cup \mathcal{R}$ that corresponds to the infection along $\alpha$. Call this a \textbf{null-cap} and denote it by $\mathcal{N}_{i}$. The resulting manifold is $W_{i+1}$. Note that $\partial W_{i+1}=M^1_{n-i-1}$ as required by property $1$ of Proposition~\ref{prop:familyofmickeys}. This completes the definition of $W_{i+1}$.

Now we set about verifying the properties $(1)-(3')$ for $W_{i+1}$.

\textbf{Property (1)}: \textbf{$W_{i+1}$ is a rational  $(n)$-bordism}.

This will follow from an inductive analysis of $H_2(W_{i+1};\mathbb{Q})$. For the following argument we assume $\mathbb{Q}$ coefficients unless specified. We establish that $H_2(W_{i+1};\mathbb{Q})/I_0$ comes from the second homology of $V$, the $\mathcal{Z}-caps$ and the null caps.

\begin{lem}\label{lem:H2W} For each $i\leq n-1$
$$
H_2(W_{i+1})\cong H_2(V)\oplus_{\mathbb{\mathcal{Z}}-caps}H_2(Z_n^j)\oplus_{j=0}^{i}H_2(\mathcal{N}_j)\oplus H_2(\partial W_{i+1}).
$$
\end{lem}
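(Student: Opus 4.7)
The proof proceeds by induction on $i$, applying the Mayer--Vietoris sequence with $\mathbb{Q}$ coefficients at each of the three stages in the recursive construction $W_{i+1} = W_i \cup E \cup \mathcal{R} \cup \mathcal{N}_i$, with the convention that for $i = n-1$ only the $E$-gluing is performed. The inductive hypothesis must be strengthened to include the auxiliary claim that $H_1(\partial W_i;\mathbb{Q}) \to H_1(W_i;\mathbb{Q})$ is an isomorphism for $i \leq n-1$; by Poincar\'e--Lefschetz duality this forces $H_2(\partial W_i;\mathbb{Q}) \to H_2(W_i;\mathbb{Q})$ to vanish. The base case $i = 0$ is precisely the analysis of $H_1(W_0)$ and $H_2(W_0)$ already carried out in the construction of $W_0$.

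The key local inputs for the inductive step, supplied by Lemma~\ref{lem:mickeyfacts}, Remark~\ref{rem:Rfacts}, and Theorem~\ref{thm:nsolvable}, are the following: $H_2(E) \cong H_2(M_L) \oplus H_2(M^1_{n-i-1})^2$ where $M_L = M^1_{n-i}$ is the ``outer'' boundary (with $H_2(M_L)$ the image of the inclusion), and each of $H_1(M_L), H_1(M_R), H_1(M^1_{n-i-1})$ maps isomorphically onto $H_1(E) \cong \mathbb{Q}$; $H_2(\mathcal{R}) = 0$ and $H_1(M_R) \to H_1(\mathcal{R})$ is an isomorphism; and $\mathcal{N}_i = Z^1_{n-i-1}$ is a solution, so $H_1(\partial \mathcal{N}_i) \to H_1(\mathcal{N}_i)$ is an isomorphism and hence $H_2(\partial \mathcal{N}_i) \to H_2(\mathcal{N}_i)$ vanishes.

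At every gluing $W' = W \cup_M N$ occurring in the construction, $M = M_K$ is the zero-surgery on some knot, so $H_1(M) \cong H_2(M) \cong \mathbb{Q}$. The inputs above ensure that in each case one of $W, N$ is ``solution-like'' (with $H_1(M) \to H_1(-)$ an isomorphism and $H_2(M) \to H_2(-)$ zero) while the other is ``cobordism-like'' (with $H_2(M) \to H_2(-)$ a split direct summand). The Mayer--Vietoris sequence therefore reduces to
$$
0 \to H_2(M) \to H_2(W) \oplus H_2(N) \to H_2(W') \to 0,
$$
since the connecting map $H_2(W') \to H_1(M)$ vanishes by injectivity of $H_1(M) \to H_1(W) \oplus H_1(N)$. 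It follows that $H_2(W') \cong \bigl(H_2(W) \oplus H_2(N)\bigr)/H_2(M)$, so each gluing simply removes one copy of $H_2(M) \cong \mathbb{Q}$ and adds $H_2(N)$.

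Applying this to the three stages of building $W_{i+1}$ from $W_i$: gluing $E$ along $M^1_{n-i}$ cancels the $H_2(\partial W_i)$ summand from the inductive hypothesis against the $H_2(M_L)$ summand of $H_2(E)$, contributing a net $H_2(M_R) \oplus H_2(M^1_{n-i-1})^2$ modulo one linear relation; capping off $M_R$ with $\mathcal{R}$ removes the $H_2(M_R)$ contribution; and capping off one copy of $M^1_{n-i-1}$ with $\mathcal{N}_i$ removes one $H_2(M^1_{n-i-1})$ summand while adding $H_2(\mathcal{N}_i)$. A single $H_2(M^1_{n-i-1}) \cong \mathbb{Q}$ summand survives, playing the role of $H_2(\partial W_{i+1})$; in the special case $i = n-1$ no capping is done, and $H_2(M_R) \oplus H_2(M^1_0)^2$ (modulo one relation) survives as $H_2(\partial W_n)$. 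The parallel degree-one computation, in which each gluing $3$-manifold has $H_1 \cong \mathbb{Q}$ mapping isomorphically into both sides, maintains the auxiliary $H_1$-isomorphism. The main obstacle is the careful bookkeeping of which of the two pieces plays the ``solution-like'' role at each of the three gluings, together with the identification of the relations that accumulate in $H_2(E)$ when one passes between the two isomorphic decompositions $H_2(M_L) \oplus H_2(M^1_{n-i-1})^2$ and $H_2(M_R) \oplus H_2(M^1_{n-i-1})^2$.
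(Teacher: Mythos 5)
Your proposal follows the paper's basic strategy (a Mayer--Vietoris induction using the local homological facts from Lemma~\ref{lem:mickeyfacts}, Remark~\ref{rem:Rfacts}, and Theorem~\ref{thm:nsolvable}), but the auxiliary inductive hypothesis you add is false and makes the argument internally inconsistent.

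Specifically, the claim that $H_1(\partial W_i;\mathbb{Q}) \to H_1(W_i;\mathbb{Q})$ is an isomorphism for all $i \leq n-1$ holds only for $i=0$. For $i\geq 1$, Property~(2) of Proposition~\ref{prop:familyofmickeys} gives $j_*(\pi_1(M^1_{n-i})) \subset \pi_1(W_i)^{(i)} \subset [\pi_1(W_i),\pi_1(W_i)]$, so the induced map on $H_1$ is \emph{zero}. The deduction via Lefschetz duality that $H_2(\partial W_i) \to H_2(W_i)$ vanishes then also fails for $i \geq 1$; in fact for those $i$ this map is a split injection (which is precisely why $H_2(\partial W_{i+1})$ appears as a summand in the Lemma's conclusion, and why the subsequent count of $H_2(W_{i+1})/I_0$ comes out right). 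Your own account then contradicts itself: you later say the $E$-gluing ``cancels the $H_2(\partial W_i)$ summand from the inductive hypothesis against the $H_2(M_L)$ summand of $H_2(E)$,'' which is exactly what would happen if the boundary inclusion were injective on $H_2$ (as it is for $i\geq 1$), but not if it were zero as you just asserted. Relatedly, the proposed dichotomy ``one piece solution-like, the other cobordism-like'' fails at the first gluing, where for $i\geq 1$ both $W_i$ and $E$ carry $H_2(M^1_{n-i})$ as a split summand. Finally, the claim that $H_1(M^1_{n-i-1}) \to H_1(E)$ is an isomorphism is also wrong: the meridian of the infecting knot is identified in $E$ with $\ell_{\eta_i}$, which is homologous to $\eta_i \subset M_R$, and $\eta_i \in \pi_1(S^3-R)^{(1)}$ is null-homologous, so this map is zero. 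The correct way to see the connecting maps vanish at each of the three gluings is case-by-case: for the $E$-gluing use that $H_1(M^1_{n-i}) \to H_1(E)$ is an isomorphism (Lemma~\ref{lem:mickeyfacts}(2)); for the $\mathcal{R}$-cap and null-cap gluings use that these are solutions, so $H_1(\partial) \to H_1(\text{cap})$ is an isomorphism. The arithmetic of the quotient $(H_2(W)\oplus H_2(N))/H_2(M)$ then needs to be tracked against the fact that $H_2(M) \to H_2(W_i)$ is a split injection (for $i \geq 1$), not the zero map.
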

\begin{proof}[Proof of Lemma~\ref{lem:H2W}] The proof is by induction. Recall that we have already established this for $W_0$. Assume it is true for $W_i$:
$$
H_2(W_{i})\cong H_2(V)\oplus_{\mathbb{\mathcal{Z}}-caps}H_2(Z_n^j)\oplus_{j=0}^{i-1}H_2(\mathcal{N}_j)\oplus H_2(M^1_{n-i}).
$$
In the passage from $W_i$ to $W_{i+1}$ the first step was to adjoin $E$ along $M^1_{n-i}$. Consider the sequence
$$
H_2(M^1_{n-i})\to H_2(E)\oplus H_2(W_{i})\overset{\pi_*}{\to} H_2(E\cup W_{i})\to H_1(M^1_{n-i})\overset{(i_*,j_*)}{\longrightarrow}H_1(E)\oplus H_1(W_{i}).
$$
Recall that we have analyzed the homology of $E$ in Lemma~\ref{lem:mickeyfacts} and found that $i_*$ is an isomorphism on $H_1$ (so $\pi_*$ above is onto); and that $H_2(E)\cong \mathbb{Q}^3$, with basis consisting of generators for the two copies of $H_2(M^1_{n-i-1})$ and one from either $H_2(M_{R})$ or $H_2(M^1_{n-i})$ (suitable generators for the latter become equated in $H_2(E)$). Thus
\begin{equation}\label{eq:zcap}
H_2(W_i\cup E )\cong H_2(V)\oplus_{\mathbb{\mathcal{Z}}-caps}H_2(Z_n^j)\oplus_{j=0}^{i-1}H_2(\mathcal{N}_j)\oplus H_2(M_R)\oplus H_2(M^1_{n-i-1}) \oplus H_2(M^1_{n-i-1}).
\end{equation}
If $i=n-1$, then $W_i\cup E\cong W_{i+1}$ so ~\ref{eq:zcap} implies Lemma~\ref{lem:H2W}. Thus we may suppose that $i\leq n-2$. Recalling Remark~\ref{rem:Rfacts}, note that both $\mathcal{R}$ and the null-cap $\mathcal{N}_{i}$ have the property that the inclusion map from their boundary induces an isomorphism on $H_1$ and induces the zero map on $H_2$. Thus, as we saw in the analysis of the homology of $W_0$, the effect on $H_2$ of adding these is to kill the generators corresponding to their boundaries and to add $H_2(\mathcal{N}_{i})$ and $H_2(\mathcal{R})$. Combining these facts we have established the Lemma for $i+1$, finishing the inductive proof of Lemma~\ref{lem:H2W}.
\end{proof}

We continue with the verification that $W_{i+1}$ is a rational $(n)$-bordism. Recall that $V$ and the $\mathcal{Z}$-caps $Z^j_n$ are rational $(n)$-solutions and that the null caps $\{\mathcal{N}_0,..,\mathcal{N}_{i}\}$ are copies of $\{Z^1_{n-1},...,Z^1_{n-i-1}\}$ which are, respectively, $(n-1),...,(n-i-1)$-solutions. Taking the union of their respective Lagrangians and duals gives collections that have the required \emph{cardinality}, by Lemma~\ref{lem:H2W} above, to generate a rational $n$-Lagrangian with rational $(n)$-duals for $W_{i+1}$. We must first verify that all these surfaces are indeed $n$-surfaces for $W_{i+1}$. This is immediate for those arising from the rational $(n)$-solutions but we must check the case of the null caps. By induction the null caps at level less than $i+1$ were already part of $W_i$ and their Lagrangians and duals were already checked to be $(n)$-surfaces for $W_i$ and hence they will be for $W_{i+1}$. Thus we need only consider the $(n-i-1)$ Lagrangian and duals for the $(n-i-1)$-solution $\mathcal{N}_{i}$. A null-cap $\mathcal{N}_{i}$ exists only in case $i\leq n-2$ (by construction). Recall that
$$
\pi_1(M^1_{n-i-1})\to\pi_1(\mathcal{N}_{i})
$$
is surjective, by Theorem~\ref{thm:nsolvable}, and $\pi_1(M^1_{n-i-1})$ is normally generated by its meridian. This meridian is isotopic in $E$ to a push-off of $\alpha$ in $M^1_{n-i}$ (by property $4$ of Lemma~\ref{lem:mickeyfacts}). But by Fact $1$ above, $\alpha$ maps into $\pi_1(W_i)^{(i+1)}$. Thus \emph{any} element of $\pi_1(\mathcal{N}_{i})$ lies in $\pi_1(W_{i+1})^{(i+1)}$ so
$$
\pi_1(\mathcal{N}_{i})^{(n-i-1)}\subset\pi_1(W_{i+1})^{(n)}.
$$
Therefore the $(n-i-1)$ Lagrangian and duals for $\mathcal{N}_{i}$ are actually $(n)$-surfaces for $W_{i+1}$. Moreover by Fact $4$ above, $\alpha$ maps into $\pi_1(W_i)^{(i+2)}_r$. Thus \\

\textbf{Fact 5:} $\pi_1(\mathcal{N}_{i})^{(n-i-1)}\subset\pi_1(W_{i+1})^{(n+1)}_r$,
\\

\noindent a fact that we record for later use. Again, by naturality of the intersection form, the union of the surfaces generating the Lagrangians and duals for $V$ and all the caps have the required intersection properties to generate a rational $n$-Lagrangian with rational $(n)$-duals for $W_{i+1}$. Thus $W_{i+1}$ is a rational $(n)$-bordism as claimed.

This completes the verification of the property $(1)$ of Proposition~\ref{prop:familyofmickeys} for $W_{i+1}$.

\textbf{Property ($2$) for $W_{i+1}$}:

Consider a component $M^1_{n-i-1}$ of $\partial W_{i+1}$. Recall that $\pi_1(M^1_{n-i-1})$ is normally generated by the meridian and this meridian is isotopic in $E$ to a push-off $\beta$ (if $i=n-1$ it could be either $\beta$ or $\alpha$) in $M^1_{n-i}=\partial W_i$. Since $\beta$ (and $\alpha$) lies in the commutator subgroup of $\pi_1(M^1_{n-i})$,
$$
j_*(\beta)\in \pi_1(W_i)^{(i+1)}
$$
by property $(2)$ for $W_{i}$ (similarly for $\alpha$). Thus
$$
j_*(\pi_1(M^1_{n-i-1}))\subset \pi_1(W_{i+1})^{(i+1)}
$$
establishing the first part of property $(2)$ for $W_{i+1}$. To prove the second part we need to show that
$j_*(\beta)$ (for $i=n-1$ one of $j_*(\beta)$ or $j_*(\alpha)$) is non-zero in $\pi_1(W_{i+1})^{(i+1)}/\pi_1(W_{i+1})^{(i+2)}_r$. Fact $4$ (if $i=n-1$ use Fact $2$) provides precisely this except for $\pi_1(W_i)$ instead of $\pi_1(W_{i+1})$. Thus it suffices to show that inclusion induces an isomorphism
\begin{equation}\label{eq:isomorph}
\pi_1(W_{i})/\pi_1(W_{i})^{(i+2)}_r\cong \pi_1(W_{i+1})/\pi_1(W_{i+1})^{(i+2)}_r.
\end{equation}
The map $\pi_1(W_i))\to \pi_1(W_i\cup E)$ is a surjection whose kernel is the normal closure of the longitude $\ell$ of the copy of $S^3-J^1_{n-i-1}\subset M^1_{n-i}$ (property $(1)$ of Lemma~\ref{lem:mickeyfacts}). The group $\pi_1(S^3-J^1_{n-i-1})$ is normally generated by the meridian of this copy of $S^3-J^1_{n-i-1}$. This meridian is identified to a push-off of the curve $\alpha$ and we have seen that $j_*(\alpha)\in \pi_1(W_i)^{(i+2)}$. Thus $j_*(\ell)\in \pi_1(W_i)^{(i+2)}_r$ and so the inclusion map $W_i\to W_i\cup E$ induces an isomorphism on $\pi_1$ modulo $\pi_1(-)^{(i+2)}_r$. If $i=n-1$ this establishes ~\ref{eq:isomorph}. Now suppose $i\leq n-2$. Similarly the map $\pi_1(W_i))\to \pi_1(W_i\cup E\cup \mathcal{R})$ is a surjection whose kernel is the normal closure of the curve $\alpha\subset M_R$ (by property $(1)$ above for $\mathcal{R}$). But this curve $\alpha$ is isotopic in $E$ to the curve $\alpha\subset M^1_{n-i}$ (by Lemma~\ref{lem:mickeyfacts}) and
$j_*(\alpha)\in \pi_1(W_i)^{(i+2)}$. Thus inclusion $W_i\to W_i\cup E\cup \mathcal{\mathcal{R}}$ induces an isomorphism on $\pi_1$ modulo $\pi_1(-)^{(i+2)}_r$. Finally, the same type of argument applies to $\mathcal{N}_{i}$ using property $(1)$ of Theorem~\ref{thm:nsolvable}, that $\pi_1(M^1_{n-i-1})$ is normally generated by its meridian and that his meridian is isotopic in $E$ to a push-off of $\alpha$ in $M^1_{n-i}$. This completes the verification of property ($2$) for $W_{i+1}$.

\textbf{Property ($3'$) for $W_{i+1}$}:

Since, for $i\neq n-1$, $W_{i+1}=W_i\cup E\cup \mathcal{R}\cup \mathcal{N}_{i}$, and $W_n=W_{n-1}\cup E$, and property $3'$ holds for $W_i$ (using the induced coefficient system), it will suffice to prove that the signature defect is zero on $E$, $\mathcal{R}$ and $\mathcal{N}_{i}$. The first is given by Lemma~\ref{lem:mickeysig} and the second holds since $\mathcal{R}$ is a slice disk complement hence an $(n.5)$-solution. Finally, we established above in Fact $5$ that
$$
j_*(\pi_1(\mathcal{N}_{i}))\subset\pi_1(W_{i+1})^{(i+2)}_r
$$
so the coefficient system induced on $\mathcal{N}_{i}$ by $\phi:\pi_1(W_{i+1})\to\G$ is trivial since $\G^{(n+2)}_r=1$. This concludes the verification of property ($3'$) for $W_{i+1}$.

This concludes the inductive proof of Proposition~\ref{prop:familyofmickeys}.
\end{proof}

\begin{ex}\label{ex:concretefamily} The families of Figure~\ref{fig:familyJ_n^j} have the disadvantage that we are unable to specify $T^*$ due to our inability to calculate $\rho^1(9_{46})$. The family of knots, $J_n^j$ of Figure~\ref{fig:betterfamilyJ_n^j} (ignore the dotted arc) overcomes this problem, giving a specific infinite family of $(n)$-solvable knots that is linearly independent modulo $\FF_{n.5}$. Here $T$ is the right-hand trefoil knot and $J_0^j=K^j$ is the family of knots used in Step $2$ of the above proof. Each $J^j_n$ ($n>0$) is obtained by two infections on the $8_9$ knot, which is itself a ribbon knot (a ribbon move is shown by the dotted arc) ~\cite{Lam}.
\begin{figure}[htbp]
\begin{picture}(160,140)
\put(13,0){\includegraphics{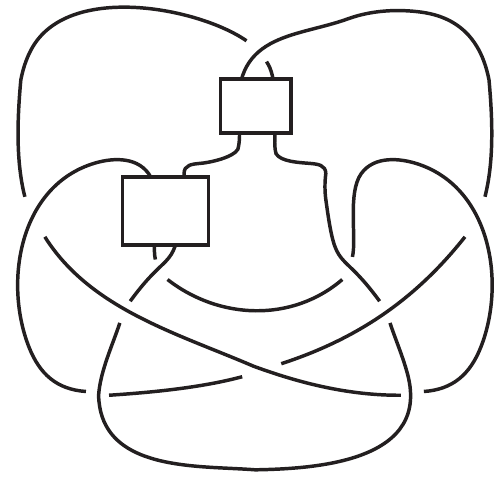}}
\put(79,136){$.....$}
\put(82,110){$T$}
\put(50,80){$J_{n-1}^j$}
\put(-25,70){$J_{n}^j~~=$}
\end{picture}
\caption{A family of $(n)$-solvable knots $J^j_n$}\label{fig:betterfamilyJ_n^j}
\end{figure}

The proof that no linear combination is rationally $(n.5)$-solvable is the same as that above, but for this family there are several major simplifications. Let $R$ be the knot obtained in Figure~\ref{fig:betterfamilyJ_n^j} by setting $J_{n-1}^j=U$. Then $R$ is a ribbon knot (a ribbon move is again shown by the dotted arc). For any $j$, $J_{n}^j$ obtained from $R$ by an infection using $J_{n-1}^j$ so inductively is $(n)$-solvable by Theorem~\ref{thm:nsolvable}. Hence $J_{n}^j\in \FF_n$. Moreover $\rho^1(R)=\rho_0(T)\neq 0$ by the calculations of ~\cite[Examples 4.4, 4.6]{CHL4}, and so $R$ satisfies Step $1$ of the above proof. Moreover $J_n^j$ is obtained from $R$ by a single infection on a curve, $\alpha$, that \textbf{generates} the cyclic module $\mathcal{A}_0(R)$, so $\alpha$ does not lie in \emph{any} submodule $P$ where $P\subset P^{\perp}$. This eliminates the various dichotomies between $\alpha$ and $\beta$ in Step $4$ of the above proof. These observations simplify the flow of Step $4$ of the above proof.
\end{ex}

\vspace{.25in}

 \section{The family $J_n$}\label{familyJ}

In this section we prove our second main theorem which shows that the family of knots $J_n$ of Figure~\ref{fig:family} contains many non-slice knots, even though, for each $n>1$, all classical invariants as well as those of Casson-Gordon vanish for $J_n$. Recall that $J_0=K$ and $J_n=J_n(K)$ is obtained from $J_0$ by applying the ``operator'' $R$ $n$ yielding the inductive definition of Figure~\ref{fig:family}. The proof that we give
actually applies to large classes of knots obtained by $n$-times iterated generalized doubling, which we record in the form of a more general theorem at the end of the section.

The main theorem of this section is:

\begin{thm}\hspace{12pt}\label{thm:main}
\begin{itemize}
\item [1.] There is a constant $C$ such that, if $|\rho_0(K)|> C$, then for each $n\ge0$, $J_n(K)$ is of infinite order in the topological concordance group. Moreover if, additionally, Arf($K)=0$, then $J_n(K)$ is of infinite order in $\FF_n/\FF_{n.5}$.
\item [2.] If $\rho^1(9_{46})\neq 0$ and some $J_n(K)$ is a slice knot (or even rationally $(n.5)$-solvable) then $\rho_0(K)\in \{0,\rho^1(9_{46})\}$.
\end{itemize}
\end{thm}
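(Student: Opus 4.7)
The plan is to adapt the proof of Theorem~\ref{thm:infgen} to the single knot $K$ (rather than to an infinite family), so that the concluding equation becomes a quantitative constraint linking $\rho_0(K)$ and $\rho^1(9_{46})$. Suppose that, for some positive integer $m$, $\tilde{J} := m \cdot J_n(K)$ is rationally $(n.5)$-solvable via a $4$-manifold $V$. (For part~2 take $m = 1$; for part~1, Corollary~\ref{cor:nsolvable} together with $\mathrm{Arf}(K) = 0$ gives $J_n(K) \in \FF_n$, so infinite order in $\FF_n/\FF_{n.5}$ is equivalent to ruling out every such $m$.) Starting from $V$, attach the standard cobordism $C$ from $M_{\tilde{J}}$ to $m$ copies of $M_{J_n(K)}$ and cap off $m-1$ of those copies by the special $(n)$-solutions $-Z_n$ of Corollary~\ref{cor:nsolvable}; the result is a rational $(n)$-solution $W_0$ whose sole boundary component is $M_{J_n(K)}$.

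Next, recursively build $W_{i+1}$ from $W_i$ for $i = 0, \ldots, n-1$ exactly as in Proposition~\ref{prop:familyofmickeys}: glue on the infection cobordism $E$ for the doubling $J_{n-i}(K) = R(J_{n-i-1}(K))$, cap off the $M_R$ boundary by the ribbon disk exterior $\mathcal{R}$, and cap off one of the two $M_{J_{n-i-1}(K)}$ boundary components by a null cap --- the special $(n-i-1)$-solution of Corollary~\ref{cor:nsolvable}. At each stage, verify inductively that $W_i$ is a rational $(n)$-bordism and that, under the canonical map $\pi_1(W_i) \to \pi_1(W_i)/\pi_1(W_i)^{(i+2)}_r$, at least one of the band meridians $\alpha, \beta$ of $9_{46}$ survives in $\pi_1(W_i)^{(i+1)}/\pi_1(W_i)^{(i+2)}_r$. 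This is precisely the role of Theorem~\ref{thm:selfannihil} combined with Theorem~\ref{thm:nontriviality}: the kernel of the relevant map into a higher-order Alexander module is a self-annihilating submodule of $\mathcal{A}_0(9_{46})$, and non-singularity of the classical Blanchfield form of $9_{46}$ forces it to be a proper submodule. The analogues of Facts~3 and~4 of Proposition~\ref{prop:familyofmickeys}, which single out which of $\alpha$ or $\beta$ must survive at intermediate levels, carry over unchanged because they depend only on the ribbon-disk structure of $9_{46}$ and on the vanishing of $\rho_0$ for algebraically slice knots (via Theorem~\ref{thm:sliceobstr}).

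At the top level the manifold $W_n$ has boundary $M_K \sqcup M_K \sqcup M_R$. Applying the coefficient system $\phi : \pi_1(W_n) \to \pi_1(W_n)/\pi_1(W_n)^{(n+1)}_r$ and using additivity of $\rho$ together with property~$(3)$ of Proposition~\ref{prop:familyofmickeys} yields
$$
\rho(M_K, \phi_\alpha) + \rho(M_K, \phi_\beta) + \rho(M_R, \phi_R) \;=\; -C \, \rho_0(K)
$$
for a non-negative integer $C$ controlled by the $\mathcal{Z}$-caps. Since $\phi_\alpha, \phi_\beta$ factor through the abelianization of $\pi_1(M_K)$, the first two terms sum to $\epsilon\,\rho_0(K)$ with $\epsilon \in \{1, 2\}$, and the third term is a first-order signature of $9_{46}$, which by Example~\ref{ex:first-ordersigs} lies in $\{0, \rho^1(9_{46})\}$. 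Hence $(\epsilon + C)\,\rho_0(K) \in \{0, -\rho^1(9_{46})\}$, so $\rho_0(K)$ is forced into the discrete set $\{0\} \cup \{-\rho^1(9_{46})/k : k \in \mathbb{Z}_{>0}\}$. For part~2 (where $m = 1$ and $C = 0$) this yields the stated conclusion, after a finer analysis of the coefficient $\epsilon$ along the lines of Theorem~\ref{thm:J2notslice}. For part~1, choose $C$ strictly larger than $|\rho^1(9_{46})|$: whenever $|\rho_0(K)| > C$ the only consistent value is $\rho_0(K) = 0$, contradicting the hypothesis, so no such $m$ exists and $J_n(K)$ is of infinite order in $\FF_n/\FF_{n.5}$ and hence in the concordance groups.

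The main technical obstacle will be the careful inductive bookkeeping in the construction of the $W_i$ --- specifically, verifying that the coefficient system restricts trivially to each null cap (so that only the $\mathcal{Z}$-caps contribute to the signature defect), and establishing the analogues of Facts~3 and~4, which rely on the precise list of first-order signatures of $9_{46}$ and on the iterated structure of the family $J_n(K)$.
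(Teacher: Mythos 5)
Your proposal contains a genuine gap at the step where you assert that ``the analogues of Facts 3 and 4 of Proposition~\ref{prop:familyofmickeys} carry over unchanged because they depend only on the ribbon-disk structure of $9_{46}$ and on the vanishing of $\rho_0$ for algebraically slice knots.'' This is not correct. The proof of Fact 3 in Proposition~\ref{prop:familyofmickeys} derives its contradiction from the inequality $\rho^1(R)\neq 0$, which in Theorem~\ref{thm:infgen} is secured by \emph{constructing} $R$ in Step~1 (possibly infecting $9_{46}$ by a trefoil). For the family $J_n(K)$ the ribbon knot $R$ is forced to be $9_{46}$ itself, and $\rho^1(9_{46})$ is an unknown real number that may well vanish. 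If it does, Fact~3 fails, and it is entirely possible that, at an intermediate level $i$, \emph{both} $\alpha$ and $\beta$ survive in $\pi_1(W_i)^{(i+1)}/\pi_1(W_i)^{(i+2)}_r$. In that event your proposed construction --- always cap the new $M_R$ by the $\mathcal{R}$-disk exterior and always attach a null-cap along the $\alpha$-copy of $M_{n-i-1}$ --- destroys the inductive hypothesis: the isomorphism $\pi_1(W_i)/\pi_1(W_i)^{(i+2)}_r\cong \pi_1(W_{i+1})/\pi_1(W_{i+1})^{(i+2)}_r$ required for property~(2) is proved in the paper using precisely the fact that $\alpha$ maps into $\pi_1(W_i)^{(i+2)}_r$, i.e., that $\alpha$ \emph{dies}. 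Capping off a boundary component along a surviving curve kills an element that was supposed to survive, and your induction collapses.

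The paper's Proposition~\ref{prop:familyofmickeys2} handles this by introducing a Case~I/Case~II dichotomy at each level: in Case~II (both curves survive) nothing is capped off, the newly created $M_R$ becomes a persistent boundary component, and both $M_{n-i-1}$ components persist. Consequently $\partial W_n$ consists of $r(n)$ copies of $M_R$ and $r(n)+1$ copies of $M_K$, not the fixed $M_K\sqcup M_K\sqcup M_R$ you assume. Because there is now no a priori control over the \emph{values} $\rho(M_R,\phi_j^R)$ (one does not obtain the isomorphism needed to identify each as a first-order signature of $9_{46}$ when Case~II has intervened upstream), the paper instead bounds each $|\rho(M_R,\phi_j^R)|$ by the Cheeger--Gromov constant $C$ of $M_R$. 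The fact that the number of $M_K$ boundary components exceeds the number of $M_R$ components by exactly one then yields $|\rho_0(K)|\le \frac{r(n)}{r(n)+1+D}\,C\le C$, a bound that is uniform in $n$. Your proposed constant ``strictly larger than $|\rho^1(9_{46})|$'' would not bound these terms, and in the case $\rho^1(9_{46})=0$ it would be essentially $0$ and the proof would be vacuous. Your argument does suffice for Part~2, precisely because Part~2 carries the extra hypothesis $\rho^1(9_{46})\neq 0$, which restores Fact~3; in the paper this is recorded as Property~(4) of Proposition~\ref{prop:familyofmickeys2}.
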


\begin{rem} We would conjecture that: If $J_n(K)$ is a slice knot then $K$ is algebraically slice. This is unknown even for $n=1$. Part $2$ of the theorem is evidence for this conjecture. We have not been able to calculate the real number $\rho^1(9_{46})$. Recall that for $J_2$ we were able to prove a much stronger theorem, Theorem~\ref{thm:J2notslice}.
\end{rem}

\begin{cor}\label{cor:CG} For any $n\ge 1$ there exist knots $J\in\mathcal{F}_{(n-1)}$ for which the knot $R(J)$, shown in Figure~\ref{fig:family3}, is not a slice knot nor even in $\mathcal{F}_{n.5}$.
\end{cor}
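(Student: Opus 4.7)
The plan is to derive this corollary directly from Theorem~\ref{thm:main} combined with Corollary~\ref{cor:nsolvable}. First I would fix an Arf invariant zero knot $K$ with $|\rho_0(K)|>C$, where $C$ is the constant from Theorem~\ref{thm:main}. Such knots are easily produced: $\rho_0$ is additive under connected sum and nonzero for (say) the right-handed trefoil, so sufficiently many connected-summed copies of the trefoil, adjusted to have trivial Arf invariant (for instance by connect-summing pairs to cancel Arf contributions), yield knots with arbitrarily large $|\rho_0|$ and Arf invariant zero.

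Next I would set $J\equiv J_{n-1}(K)$, the result of $n-1$ iterations of the $R$-doubling operator applied to $K$, where $R=9_{46}$. By Corollary~\ref{cor:nsolvable}, $J_{n-1}(K)$ is $(n-1)$-solvable, so $J\in\mathcal{F}_{n-1}$ as required by the statement. By the recursive definition of the family $\{J_n\}$, we have
$$R(J)=R(J_{n-1}(K))=J_n(K).$$

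Finally, Theorem~\ref{thm:main} (part 1), applied with our choice of $K$, guarantees that $J_n(K)$ has infinite order in $\mathcal{F}_n/\mathcal{F}_{n.5}$. In particular $R(J)=J_n(K)\notin\mathcal{F}_{n.5}$, and \emph{a fortiori} $R(J)$ is not a slice knot. This produces the required $J$ and completes the proof.

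There is essentially no obstacle here beyond the existence of $K$, which is elementary; the real work has already been absorbed into the proofs of Theorem~\ref{thm:main} and Corollary~\ref{cor:nsolvable}. The conceptual content of the corollary is simply the observation that the family $\{J_n(K)\}$ exhibits \emph{explicitly} the sharpness of the main theorem at each filtration level: the ``input'' knot $J=J_{n-1}(K)$ sits in $\mathcal{F}_{n-1}$, while the single application of $R$ to it escapes $\mathcal{F}_{n.5}$.
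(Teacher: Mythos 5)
Your proposal is correct and follows essentially the same path as the paper's own proof: choose $K$ with $|\rho_0(K)|>C$ and ${\rm Arf}(K)=0$ (the paper explicitly suggests an even number of connect-summed trefoils), set $J=J_{n-1}(K)$, observe $R(J)=J_n(K)$, and invoke Theorem~\ref{thm:main}. The only minor stylistic difference is that you explicitly appeal to Corollary~\ref{cor:nsolvable} for $J\in\mathcal{F}_{n-1}$, which the paper leaves implicit.
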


\begin{figure}[htbp]
\setlength{\unitlength}{1pt}
\begin{picture}(200,160)
\put(0,0){\includegraphics[height=150pt]{family.pdf}}
\put(-50,70){$R(J)~=$}
\put(9,92){$J$}
\put(124,92){$J$}
\end{picture}
\caption{}\label{fig:family3}
\end{figure}

\begin{proof}[Proof of Corollary~\ref{cor:CG}] Let $J=J_{n-1}(K)$ for some $K$ with $|\rho_0(K)|> C_n$ (for example a connected sum of a suitably large even number of trefoil knots). Then the knot on the right-hand side of Figure~\ref{fig:ribbonCG} is merely $J_n(K)$ which, by Theorem~\ref{thm:main}, is $(n)$-solvable hence in $\mathcal{F}_{(n)}$, but is not slice nor even rationally $(n.5)$-solvable; hence not in $\mathcal{F}_{(n.5)}$. Since $J\in \mathcal{F}_{(n-1)}$, if $n\geq 2$ then $J$ is algebraically slice and if $n\geq 3$ then $J$ has vanishing Casson-Gordon invariants ~\cite[Theorem 9.11]{COT}.
\end{proof}

\begin{proof}[Proof of Theorem \ref{thm:main}] The proof follows closely the lines of the proof of Theorem~\ref{thm:infgen}. Let $R$ be the ribbon knot $9_{46}$ and recall that $J_0=J_0(K)=K$ and that $J_n=J_n(K)$ is obtained from $R$ by infecting twice, along the curves $\alpha$ and $\beta$ (as shown in Figure~\ref{fig:Rdoubling}), using the knot $J_{n-1}$ as the infecting knot in each case, as shown in Figure~\ref{fig:family}. By Theorem~\ref{thm:nsolvable}, $J_n(K)$ is $(n)$-solvable for any Arf invariant zero knot $K$. Let $C$ be the Cheeger-Gromov constant for $M_{R}$. We shall show that if a non-zero multiple of $J_n$ is rationally $(n.5)$-solvable then $|\rho_0(K)|\leq C$. In particular this will demonstrate that if $K$ is chosen so that $|\rho_0(K)|>C$ then $J_n(K)$ is of infinite order in $\FF_n/\FF_{n.5}$ and consequently of infinite order in the smooth and topological concordance groups. We will also show that if $\rho^1(9_{46})\neq 0$ and $J_n$ is rationally $(n.5)$-solvable then $\rho_0(K)\in \{0,\rho^1(9_{46})\}$ (a much stronger result).

Suppose that, for some positive integer $m$, $\tilde J\equiv \#_{i=1}^m J_n$ is rationally $(n.5)$-solvable. Under this assumption we shall construct a family of $4$-manifolds $W_i$, as in the proof of Theorem~\ref{thm:infgen}, and reach the desired results. Again, let $M_{i}$ abbreviate $M_{J_i}$.

\begin{prop}\label{prop:familyofmickeys2} Under the assumption that $\tilde J$ is rationally $(n.5)$-solvable, for each $0\leq i\leq n$ there exists a $4$-manifold $W_i$ with the following properties. Letting $\pi=\pi_1(W_i)$,

\begin{itemize}
\item [(1)] $W_i$ is a rational $(n)$-bordism whose boundary is a disjoint union of $r(i)$ copies of $M_R$ and $r(i)+1$ copies of $M_{n-i}$~;
\item [(2)] Each inclusion $j:M_{n-i}\subset\partial W_i\to W_i$ satisfies
$$
j_*(\pi_1(M_{n-i}))\subset \pi^{(i)};
$$
and
$$
j_*(\pi_1(M_{n-i}))\cong \mathbb{Z}\subset \pi^{(i)}/\pi^{(i+1)}_r;
$$
\item [(3)] For any PTFA coefficient system $\phi:\pi_1(W_i)\to\G$ with $\G^{(n+1)}_r=1$
$$
\rho(\partial W_i,\phi)\equiv\sigma_\G^{(2)}(W_i,\phi)-\sigma(W_i)=-D\rho_0(K)
$$
for some non-negative integer $D$ (depending on $\phi$). If $m=1$ then $D=0$.
\item [(4)] If $\rho^1(9_{46})\neq 0$ then, for $i<n$, $r(i)=0$ whereas $r(n)=0$ or $1$, and if $r(n)=1$ then $\rho(M_R,\pi_1(M_R)\to\pi_1(W_n)\to \pi_1(W_n)/\pi_1(W_n)^{(n+1)}_r)=\rho^1(9_{46})$.
\end{itemize}
\end{prop}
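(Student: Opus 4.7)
The proof is by induction on $i$, closely paralleling the construction in Proposition~\ref{prop:familyofmickeys} but now permitting surviving $M_R$ boundary components when neither of the two ribbon disk exteriors of $R=9_{46}$ admits an extension of the coefficient system. For the base case, I would take a rational $(n.5)$-solution $V$ of $\tilde J=\#^m J_n$, attach the standard cobordism $C$ of~\cite{COT2} from $M_{\tilde J}$ to $m$ copies of $M_n$, and cap all but one of those $M_n$ components with copies of $-Z_n$ from Theorem~\ref{thm:nsolvable} (all with the same orientation since $m>0$). This yields $W_0$ with $\partial W_0 = M_n$, so $r(0)=0$; properties $(1)$ and $(3)$ follow exactly as in Proposition~\ref{prop:familyofmickeys} (via Theorem~\ref{thm:sliceobstr} on $V$, Lemma~\ref{lem:mickeysig} on $C$, and Theorem~\ref{thm:nsolvable}$(2)$ on the $\mathcal{Z}$-caps), while $(2)$ and $(4)$ hold vacuously.

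For the inductive step, set $\pi=\pi_1(W_i)$ and $\phi:\pi\to\pi/\pi^{(i+1)}_r$. By property $(2)$ of $W_i$, the restriction $\phi|_{M_{n-i}}$ is nontrivial and factors through the abelianization $\mathbb{Z}$ for each of the $r(i)+1$ copies of $M_{n-i}$. Combining Theorem~\ref{thm:selfannihil} and Theorem~\ref{thm:nontriviality}, the associated kernel $P\subset\mathcal{A}_0(J_{n-i})\cong\mathcal{A}_0(R)$ for each copy satisfies $P\subset P^\perp$ with respect to the classical Blanchfield form $\mathcal{B}\ell_0$. Since $\Delta_R(t)=(2t-1)(t-2)$ has coprime irreducible factors, $\mathcal{A}_0(R)$ has exactly the four submodules $\{0,\langle\alpha\rangle,\langle\beta\rangle,\mathcal{A}_0\}$, and non-singularity of $\mathcal{B}\ell_0$ forces $P\in\{0,\langle\alpha\rangle,\langle\beta\rangle\}$; in particular at least one of $\alpha,\beta$ is nontrivial in $\pi^{(i+1)}/\pi^{(i+2)}_r$. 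Attach an $E$-cobordism (Lemma~\ref{lem:mickeyfacts}) to each copy of $M_{n-i}$, introducing one new $M_R$ and two new $M_{n-i-1}$ boundary components. For each copy with $P=\langle\alpha\rangle$ (resp.\ $\langle\beta\rangle$), cap the new $M_R$ with the ribbon disk exterior for cutting the $\alpha$ (resp.\ $\beta$) band and null-cap, using $Z_{n-i-1}$, the $M_{n-i-1}$ boundary component whose meridian is a push-off of that generator. For each copy with $P=0$ the $M_R$ is not $\mathcal{R}$-cappable; leave it uncapped and null-cap exactly one of the two $M_{n-i-1}$ components. Properties $(1)$--$(3)$ are then verified exactly as in Proposition~\ref{prop:familyofmickeys}: the $H_2$ decomposition is additive over the new pieces; each surviving $M_{n-i-1}$ meridian equals a push-off of a generator of $\mathcal{A}_0(J_{n-i})/P$, hence is nontrivial mod $\pi_1(W_{i+1})^{(i+2)}_r$, establishing $(2)$; and the signature formula of $(3)$ follows additively from Theorem~\ref{thm:oldsliceobstr} ($\mathcal{R}$-caps), Lemma~\ref{lem:mickeysig} ($E$-cobordisms), and Theorem~\ref{thm:nsolvable}$(2)$ applied to the $\mathcal{Z}$- and null-caps (the latter is valid because the image of each cap's fundamental group lies in $\G^{(i+1)}$, so the required $\G^{(n-i)}=1$ condition holds inside this subgroup).

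The principal obstacle is property $(4)$. Assume $\rho^1(9_{46})\ne 0$ and suppose, for contradiction, that some copy of $M_{n-i}$ at a stage $i\le n-2$ carries $P=0$. Consider $\psi:\pi_1(W_{i+1})\to\pi_1(W_{i+1})/\pi_1(W_{i+1})^{(i+2)}_r$, noting that $\G^{(n+1)}=1$ since $i+2\le n$. By property $(2)$, $\psi|_{M_{n-i}}$ factors through $G/G^{(2)}$ with $G=\pi_1(M_{n-i})$, so $\rho(M_{n-i},\psi)$ is one of the first-order signatures of $J_{n-i}$. Property $(3)$-style bookkeeping on $W_i$ (following Proposition~\ref{prop:familyofmickeys}) reduces $\rho(\partial W_i,\psi)$ to a sum of signature defects of $V$, $C$, $E$-cobordisms, $\mathcal{R}$-caps and the various $\mathcal{Z}$- and null-caps; because $V$ is $(n.5)$-solvable and each $\mathcal{Z}$- or null-cap used so far is at least $(i+1.5)$-solvable (since $i+1.5\le n-0.5$), every one of these contributions vanishes via Theorem~\ref{thm:sliceobstr}, Theorem~\ref{thm:oldsliceobstr}, and Lemma~\ref{lem:mickeysig}. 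Hence $\rho(M_{n-i},\psi)=0$. But with $P=0$, Example~\ref{ex:first-ordersigs} evaluates this first-order signature as $\rho^1(R)+2\rho_0(J_{n-i-1})$; since $n-i-1\ge 1$, $J_{n-i-1}$ is $(0.5)$-solvable (Theorem~\ref{thm:nsolvable}), so Theorem~\ref{thm:sliceobstr} gives $\rho_0(J_{n-i-1})=0$ and we obtain $\rho^1(9_{46})=0$, a contradiction. Thus $P\ne 0$ for every copy at every stage $i\le n-2$, forcing $r(i+1)=0$ and hence $r(i)=0$ for all $i<n$. At the final stage $i=n-1$, there is just one copy of $M_1$ (since $r(n-1)=0$), and the analogous computation instead yields $\rho^1(R)+2\rho_0(K)$, which need not vanish; accordingly $P=0$ is permissible, producing $r(n)=1$, and $r(n)\in\{0,1\}$. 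In the $r(n)=1$ case, the residual $M_R$ inherits the coefficient system corresponding to $P=0$, so $\rho(M_R,\cdot)=\rho^1(R)=\rho^1(9_{46})$ by Definition~\ref{defn:rho1}.
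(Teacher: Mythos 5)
Your proposal follows the paper's broad strategy (base case $W_0$ via $V$, the COT cobordism $C$, and $\mathcal Z$-caps; inductive step via $E$-cobordisms, $\mathcal R$-caps, null-caps; and the same signature bookkeeping), but it diverges from the paper in one essential place and that divergence is an error, not a variation.

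In the inductive step you split into the case $P\in\{\langle\alpha\rangle,\langle\beta\rangle\}$ (one generator dies) and the case $P=0$ (both survive). For $P=0$ you propose to "leave [$M_R$] uncapped and null-cap exactly one of the two $M_{n-i-1}$ components." The paper, in its Case II, explicitly does \emph{not} cap either of the two new $M_{n-i-1}$ components (nor the new $M_R$): "To an $E$ of Case II nothing more will be added." Your choice breaks property $(1)$. Tracking boundary components: attaching $E$ to one $M_{n-i}$ removes $M_{n-i}$ and produces two $M_{n-i-1}$'s and one $M_R$. In the paper's Case II this nets $+2$ copies of $M_{n-i-1}$ and $+1$ copy of $M_R$, so $\bigl(\#M_{n-i-1}\text{'s}\bigr)-\bigl(\#M_R\text{'s}\bigr)$ is unchanged, preserving the relation $j(i)=r(i)+1$. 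In your Case II the net is $+1$ and $+1$, so the difference drops by one, and after the first Case II occurrence you get $j(i+1)=r(i+1)$, contradicting the statement's count $r(i)+1$. This is visible already in your own description of the $i=n-1$ stage under $\rho^1(9_{46})\neq 0$: you produce $\partial W_n=M_0\sqcup M_R$ (one each), whereas the proposition asserts $\partial W_n$ is $(r(n)+1)\,M_0\sqcup r(n)\,M_R = 2M_0\sqcup M_R$. The factor $r(n)+1$ in the denominator of the inequality $|\rho_0(K)|\le \frac{r(n)}{r(n)+1+D}C$ in the application of this proposition is exactly the point, so the count is not cosmetic.

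There is a second problem with null-capping in Case II that is worth naming. The paper's Fact 5, $j_*(\pi_1(\mathcal N_i))\subset\pi_1(W_{i+1})^{(i+2)}_r$, is what allows the null-cap's contribution to drop out of the signature bookkeeping and is also used in showing that adding the caps does not change $\pi_1(-)/\pi_1(-)^{(i+2)}_r$. Fact 5 is proved using that the meridian of the null-capped $M_{n-i-1}$ equals a push-off of the generator that \emph{dies} mod $\pi_1(W_i)^{(i+2)}_r$ (Fact 4). When $P=0$ neither $\alpha$ nor $\beta$ dies, so neither copy of $M_{n-i-1}$ admits the argument that makes the null-cap benign, and you cannot establish the analogue of Fact 5 for the null-cap you insert. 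The correct resolution, as in the paper, is simply to add nothing further when $P=0$: both $M_{n-i-1}$'s persist and the extra $M_R$ persists, and the $r(i)+1$ versus $r(i)$ bookkeeping then closes.

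Your base case, your verification that $P$ is a self-annihilating Alexander submodule via Theorems~\ref{thm:selfannihil} and \ref{thm:nontriviality}, your classification of the four submodules of $\mathcal A_0(9_{46})$, your Case-I capping scheme, and your Fact-3-style contradiction for property $(4)$ all match the paper and are fine. The sole but fatal deviation is null-capping in the $P=0$ case.
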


Before proving Proposition~\ref{prop:familyofmickeys2}, we assume it and finish the proof of Theorem~\ref{thm:main}. Consider $W_{n}$ from Proposition~\ref{prop:familyofmickeys2} with boundary $(r(n)+1)M_0 \coprod r(n)M_R$. Recall that $M_0=M_{J_0}=M_{K}$. Let $\pi=\pi_1(W_n)$ and consider $\phi:\pi\to\pi/\pi^{(n+1)}_r$. Let $\phi^R_j$, $1\leq j\leq r(n)$, and $\phi^K_j$, $1\leq j\leq r(n)+1$ denote the restrictions of $\phi$ to the various boundary components of $W_n$. Then by property $(3)$ of Proposition~\ref{prop:familyofmickeys2} for $i=n$
\begin{equation}\label{eq:first}
\sum_{j=1}^{r(n)+1}\rho(M_{K},\phi_j^K)+ \sum_{j=1}^{r(n)}\rho(M_R,\phi^R_j)=-D\rho_0(K)
\end{equation}
where $D\geq 0$. By property $(2)$ of Proposition~\ref{prop:familyofmickeys2}, for each boundary component $M_K$,
$$
j_*(\pi_1(M_{K})\subset \pi^{(n)},
$$
implying that each $\phi_j^K$ factors through the abelianization. Additionally by property $(2)$, each of these coefficient systems is non-trivial. Hence by $(2)-(4)$ of Proposition~\ref{prop:rho invariants}
$$
\rho(M_{K},\phi_j^K)=\rho_0(K)
$$
for each $j$. Thus we can simplify ~\ref{eq:first} yielding
\begin{equation}\label{eq:final}
(r(n)+1+D)\rho_0(K)=-\sum_{j=1}^{r(n)}\rho_0(M_R,\phi^R_j).
\end{equation}
Since $C$ is the Cheeger-Gromov constant of $M_R$, for each $j$
$$
|\rho_0(M_R,\phi^R_j)|\leq C.
$$
Hence
\begin{equation}\label{eq:final2}
|\rho_0(K)|\leq \frac{r(n)}{r(n)+1+D}C\leq C.
\end{equation}
Hence if $|\rho_0(K)|> C$ then $\tilde{J}$ is not is rationally $(n.5)$-solvable, thereby completing the proof of Part $1$ of Theorem~\ref{thm:main}, modulo the proof of Proposition~\ref{prop:familyofmickeys2}.

For Part $2$, specialize to the case that $m=1$ and assume that $\rho^1(9_{46})\neq 0$. Then, by property $4$, either $r(n)=0$ or $r(n)=1$. In the first case, by equation ~\ref{eq:final2}, $\rho(K)=0$. If $r(n)=1$ then, using the last clause of property 4, equation ~\ref{eq:final} becomes
$$
|\rho_0(K)|=\frac{1}{2+D}|\rho(M_R,\phi^R)|=\frac{1}{2+D}|\rho^1(R)|.
$$
Moreover, since $m=1$, $D=0$ by property $3$. This completes the proof Part $2$ of Theorem~\ref{thm:main}, modulo the proof of Proposition~\ref{prop:familyofmickeys2}.
\end{proof}

\begin{proof}[Proof of Proposition~\ref{prop:familyofmickeys2}] We give a recursive definition of $W_i$. First we define $W_0$. This will be identical to a special case of the $W_0$ (where all $m_j=0$ except $m_1$) constructed in the proof of Theorem~\ref{thm:infgen}. Let $V$ be a rational $(n.5)$-solution for $\tilde{J}$. Let $C$ be the standard cobordism from $M_{\tilde J}$ to the disjoint union of $m$ copies of $M_n$. Consider $=C\cup V$ and cap off $m-1$ of its boundary components using copies of the special $(n)$-solutions $-Z_n$ as provided by Theorem~\ref{thm:nsolvable}. These are called \textbf{$\mathcal{Z}$-caps}. Let the result be denoted $W_0$ as shown schematically in Figure~\ref{fig:w0second}. Note that if $m=1$ no $\mathcal{Z}$-caps occur.
\begin{figure}[htbp]
\setlength{\unitlength}{1pt}
\begin{picture}(108,106)
\put(0,0){\includegraphics{cobordism_dots}}
\put(50,42){$C$}
\put(50,10){$V$}
\put(85,85){$-Z_{n}$}
\put(44,85){$-Z_{n}$}
\put(-28,24){$M_{\tilde{J}}$}
\put(-10,27){\vector(1,0){9}}
\put(-28,71){$M_{n}$}
\put(-10,74){\vector(1,0){9}}
\end{picture}
\caption{$W_{0}$}\label{fig:w0second}
\end{figure}

\noindent Properties $1$ and $4$ are satisfied with $r(0)=0$. Property $2$ was verified in the proof of Proposition~\ref{prop:familyofmickeys}. Once again, the proof is easier if we inductively prove a more robust version of property $3$.
\vspace{.25in}

\noindent\textbf{Property $3^\prime$}: $W_0\subset W_i$ and for any PTFA coefficient system $\phi:\pi_1(W_i)\to\G$ with $\G^{(n+1)}=1$
$$
\rho(\partial W_i,\phi)=\sigma^{(2)}(W_i,\phi)-\sigma(W_i)=-\sum_{\mathcal{Z}-caps}(\sigma^{(2)}(Z_n,\phi)-\sigma(Z_n))
=-D\rho_0(K)
$$
for some non-negative integer $D$.

\vspace{.25in}
\noindent The last equality in property $3'$ is a consequence of Theorem~\ref{thm:nsolvable}. If $m=1$, it will be clear from the construction that $D=0$ simply because there will be no $\mathcal{Z}$-caps. Property $3'$ for $W_0$ was verified in the proof of Proposition~\ref{prop:familyofmickeys}. Thus we have constructed $W_0$ such that Proposition~\ref{prop:familyofmickeys2} holds for $i=0$.

Now assume that $W_i$, $i\leq n-1$, has been constructed satisfying the properties above.  Before defining $W_{i+1}$, we collect some crucial facts about $W_i$. Consider \emph{any} boundary component $M_{n-i}$ of $W_i$. Recall that, by definition, $J_{n-i}$ is obtained from $R$ by two infections along the circles labelled $\alpha$ and $\beta$. These two circles form a generating set for $\mathcal{A}_0(J_{n-i})$. Even though the boundary of $W_i$ consists of more than just this one copy of $M_{n-i}$, the exact same proof as in Proposition~\ref{prop:familyofmickeys} establishes the following (because Theorem~\ref{thm:nontriviality} applies to \emph{any} boundary component for which the relevant coefficient system is nontrivial).

\begin{itemize}
\item [\textbf{Fact $1$}:] Each of $\{\alpha,\beta\}$ maps into $\pi_1(W_i)^{(i+1)}$
\item [\textbf{Fact $2$}:] The kernel, $\tilde{P}$,  of the map
$$
\pi_1(M_{n-i})^{(1)}\to \pi_1(W_i)^{(i+1)}_r/\pi_1(W_i)^{(i+2)}_r
$$
is of the form $\pi^{-1}(P)$ for some submodule $P$ such that $P\subset P^\perp$ with respect to the classical Blanchfield form and at least one of $\{\alpha,\beta\}$ maps non-trivially under this map.
\end{itemize}

Moreover if $i\leq n-2$ we claim that
\begin{itemize}
\item [\textbf{Fact $3$}:] If $\rho^1(R)\neq 0$ then precisely one of $\{\alpha,\beta\}$ maps non-trivially under the above map.
\end{itemize}

Under the assumptions that $\rho^1(9_{46})\neq 0$ and $i\leq n-2$, the verification \textbf{Fact} $3$ is almost the same as the verification of Fact $3$ from the proof of Proposition~\ref{prop:familyofmickeys}. Specifically, to establish Fact $3$ consider the coefficient system
$$
\phi:\pi_1(M_{n-i})\to \G=\pi_1(W_i)/\pi_1(W_i)_r^{(i+2)}.
$$
Note that $\G^{(n+1)}=1$ since $i+2\leq n+1$. By property $3'$ for $W_i$,
$$
\rho(M_{n-i},\phi)=\sigma^{(2)}(W_i,\phi)-\sigma(W_i)=\sum_{\mathcal{Z}-caps}\sigma^{(2)}(Z_n,\phi_j)-\sigma(Z_n).
$$
But the $Z_n$ are $(n)$-solutions and so are $(i+1.5)$-solutions since $i+1.5\leq n$. Hence, by Theorem~\ref{thm:sliceobstr}, all these signature defects are zero. Thus
$$
\rho(M_{n-i},\phi)=0.
$$
Moreover, by property $2$ for $W_i$, $\phi(\pi_1(M_{n-i}))\subset \pi_1(W_i)^{(i)}$. Therefore $\phi$ restricted to $\pi_1(M_{n-i})$ factors through $\pi_1(M_{n-1})/\pi_1(M_{n-1})^{(2)}$. Just as we argued in the proof of Fact $3$ in the previous section, this implies that one of the first-order signatures of $J_{n-i}$ is zero. Now argue by contradiction. Assuming that \emph{both} $\alpha$ and $\beta$ mapped nontrivially, this first-order signature would be $\rho^1(J_{n-1})$. Since $J_{n-1}$ is obtained from $R$ by two infections using $J_{n-i-1}$ as the infecting knot, as in Example~\ref{ex:first-ordersigs},
$$
0=\rho(M_{n-i},\phi)=\rho^1(J_{n-1})=\rho^1(R)+\rho_0(J_{n-i-1})+\rho_0(J_{n-i-1}).
$$
However, by assumption, $\rho^1(R)=\rho^1(9_{46})\neq 0$ and, since $n-i-1\geq 1$, $J_{n-i-1}$ is $(0.5)$-solvable by Theorem~\ref{thm:nsolvable} and so $\rho_0(J_{n-i-1})=0$ by Theorem~\ref{thm:sliceobstr}. This contradiction implies Fact $3$.

We now give the construction of $W_{i+1}$. Recall that $\partial W_{i}$ consists of copies of $M_{n-i}$ and (possibly) copies of $M_R$. These \textbf{old} copies of $M_R$ will \emph{not} be capped off. However, for each copy of $M_{n-i}$ there are two cases:

\textbf{Case I}: For the specified copy of $M_{n-i}$ in $\partial W_i$, $\alpha$ maps to zero and $\beta$ maps to non-zero under the map
$$
\pi_1(M_{n-i})\overset{j_*}{\lra} \pi_1(W_i)\overset{}{\lra}\pi/\pi_r^{(i+2)}.
$$
By symmetry this will also cover the case when the roles of $\alpha$ and $\beta$ are reversed.

\textbf{Case II}: For the specified copy of $M_{n-i}$ in $\partial W_i$, \textbf{both} of $\{\alpha,\beta\}$ map to non-zero under the map above.

\noindent Since $J_{n-i}$ is obtained from $R$ by two infections using $J_{n-i-1}$ as the infecting knot, there is a corresponding cobordism $E$ with $4$ boundary components: $M_{n-i}$, $M_{R}$ and two copies of $M_{n-i-1}$. $W_{i+1}$ is obtained from $W_i$ by first adjoining, along \emph{each} copy of $M_{n-i}$, a copy of $E$. The newly created copy of $M_R\subset \partial E$ will be called a \textbf{new} copy of $M_R$. Such copies of $E$ lie in either Case I or Case II according to the boundary component $M_{n-i}$ to which they are glued. To an $E$ of Case II nothing more will be added. To an $E$ of Case I further adjoin, to the copy of $M_{n-i-1}\subset \partial E$ whose meridian is equated to $\alpha$, a copy of the special $(n-i-1)$-solution $Z_{n-i-1}$ as constructed in Theorem~\ref{thm:nsolvable}. The latter will again be called a \textbf{null-cap} and the collection of all such be denoted by $\mathcal{N}_i$. Also, for an $E$ of Case I , cap off the new copy of $M_{R}$ with the ribbon disk exterior $\mathcal{R}$ that corresponds to $\alpha$. Such is called an $\mathcal{R}$-cap. This completes the definition of $W_{i+1}$ in all cases. This differs from the proof of Proposition~\ref{prop:familyofmickeys} in only two ways. First, if Case II ever occurs then there will be exposed copies of $M_R$ that will never be capped off, whereas in the proof of Proposition~\ref{prop:familyofmickeys}, Case II only occurred for $i=n-1$, so only $W_n$ had an $M_R$ boundary component (this is because, in the proof of Proposition~\ref{prop:familyofmickeys}, $R$ was chosen specifically so that $\rho^1(R)\neq 0$.) Thirdly, here we cap off the final new copies of $M_R$ (created in going from $W_{n-1}$ to $W_n$) if they arise from a Case I $E$, whereas in the proof of Proposition~\ref{prop:familyofmickeys} we did not (although we could have).

Now we set out to verify properties $1-4$ for $W_{i+1}$. Certainly  $\partial W_{i+1}$  is a disjoint union of some number, say $j(i+1)$, of copies of $M_{n-i-1}$ and some number of copies, say $r(i+1)$, of $M_R$. We seek to show that $j(i+1)=r(i+1)+1$. In this notation, by induction $j(i)=r(i)+1$. When we formed $W_{i+1}$, for \textbf{each} of the boundary components $M_{n-i}$ we adjoined a copy of $E$. This eliminated one boundary component but createsd $3$ new boundary components. In net, \emph{before possibly capping off}, $j(i+1)=j(i)+1$ and $r(i+1)=r(i)+1$. In Case II, nothing more was done. In Case I, one copy of $M_R$ and one copy of $M_{n-i-1}$ was capped off. Thus in any case the difference $j(i+1)-r(i+1)$ is preserved by the addition of new $\mathcal{R}$-caps and null caps. Thus $j(i+1)=r(i+1)+1$ as required by property $1$.

\textbf{Property ($2$) for $W_{i+1}$}:

The proof is essentially identical to that in the proof of Proposition~\ref{prop:familyofmickeys}. Consider a component $M_{n-i-1}\subset\partial W_{i+1}$. Note that $\pi_1(M_{n-i-1})$ is normally generated by the meridian and this meridian is isotopic in $E$ to a push-off of either $\alpha$ or $\beta$ in $M_{n-i}=\partial W_i$. Since both $\alpha$ and $\beta$ lie in the commutator subgroup of $\pi_1(M_{n-i})$,
$$
j_*(\alpha),j_*(\beta)\in \pi_1(W_i)^{(i+1)}
$$
by property $(2)$ for $W_{i}$. Thus
$$
j_*(\pi_1(M_{n-i-1}))\subset \pi_1(W_{i+1})^{(i+1)}
$$
establishing the first part of property $(2)$ for $W_{i+1}$. To prove the second part we need to show that
$j_*(\beta)$ (in Case I) or both $j_*(\alpha)$ and $j_*(\beta)$ (in Case II) are non-zero in $\pi_1(W_{i+1})^{(i+1)}/\pi_1(W_{i+1})^{(i+2)}_r$. Fact $2$ together with the definitions of Case I and II ensure precisely this except for the group $\pi_1(W_i)$ instead of the group $\pi_1(W_{i+1})$. Thus it suffices to show that inclusion induces an isomorphism
\begin{equation}\label{eq:samepi}
\pi_1(W_{i})/\pi_1(W_{i})^{(i+2)}_r\cong \pi_1(W_{i+1})/\pi_1(W_{i+1})^{(i+2)}_r.
\end{equation}
This was already shown in the proof of Proposition~\ref{prop:familyofmickeys}. This completes the verification of property ($2$) for $W_{i+1}$.

\textbf{Property ($3'$) for $W_{i+1}$}:

Since property $3'$ holds for $W_i$ and since $W_{i+1}$ is obtained from $W_i$ by adjoining copies of $E$ and possibly some $\mathcal{R}$-caps and null-caps, it suffices to prove that the signature defect is zero on the extra pieces $E$, $\mathcal{R}$ and $\mathcal{N}_{i}$. But this was established already in Proposition~\ref{prop:familyofmickeys}. This concludes the verification of property ($3'$) for $W_{i+1}$.

\textbf{Property ($4$) for $W_{i+1}$}

Suppose $\rho^1(9_{46})\neq 0$ and $i+1<n$. Inductively we may suppose that $r(i)=0$, that is $\partial W_i=M_{n-i}$. Then $i\leq n-2$ so Fact 3 holds. Consequently in the passage from $W_i$ to $W_{i+1}$ the Case II never occurs. Thus the new copy of $M_R$ and one of the copies of $M_{n-i-1}$ are capped off, so that $\partial W_{i+1}=M_{n-i-1}$. Hence $r(i+1)=0$. If $i+1=n$ then we still may assume inductively that $r(n-1)=0$ and $\partial W_{n-1}=M_{1}$, but now Fact 3 may not hold. Nonetheless, since then only one copy of $E$ is adjoined in going from $W_{n-1}$ to $W_n$, $r(n)$ is either $0$ or $1$. In the latter case $M_R\subset\partial W_n$ and Case II must have occurred. Then the induced coefficient system
$$
\phi_R:\pi_1(M_R)\to \pi_1(W_n)\to \pi_1(W_n)/\pi_1(W_n)^{(n+1)}_r
$$
factors through $\pi_1(M_R)/\pi_1(M_R)^{(2)}$ since $\pi_1(M_R)$ is normally generated by the meridian of $R$, which is isotopic in its copy of $E$ to the meridian of a copy of $M_{1}\subset \partial W_{n-1}$ and by property $2$ this meridian lies in $\pi_1(W_{n-1})^{(n-1)}$ and hence in $\pi_1(W_{n})^{(n-1)}$. We now just need to establish that $\phi_R$ induces an embedding of $\pi_1(M_R)/\pi_1(M_R)^{(2)}$ since this will identify $\rho(M_R,\phi_R)$ as $\rho^1(R)$. Combining Fact $2$ in the case $i=n-1$ and the fact that $M_R$ arose from an $E$ of Case II, the kernel, $\tilde{P}$, of the map
$$
\pi_1(M_{1})^{(1)}\to \pi_1(W_{n-1})^{(n)}_r/\pi_1(W_{n-1})^{(n+1)}_r
$$
is zero. Thus $\pi_1(M_1)/\pi_1(M_1)^{(2)}$ embeds in $\pi_1(W_{n-1})/\pi_1(W_{n-1})^{(n+1)}_r$. By equation ~\ref{eq:samepi} with $i=n-1$, $\pi_1(M_1)/\pi_1(M_1)^{(2)}$ embeds in $\pi_1(W_n)/\pi_1(W_n)^{(n+1)}_r$. But the Alexander modules of $M_1$ and $M_R$ are isomorphic, with the meridian and the curves $\alpha$ and $\beta$ being identified in $E$. This shows that $\phi_R$ induces an embedding of $\pi_1(M_R)/\pi_1(M_R)^{(2)}$.

This concludes the proof of Proposition~\ref{prop:familyofmickeys2}.
\end{proof}

More generally, the proof above proves this more general result about iterated generalized doublings of knots.

\begin{thm}\label{thm:main3} Suppose $R_i$, $1\leq i\leq n$, is a set of (not necessarily distinct) slice knots and, for each fixed $i$, $\{\eta_{i1},...,\eta_{im_i}\}$ is a trivial link of circles in $\pi_1(S^3-R_i)^{(1)}$ such that for some $ij$ and $ik$ (possibly equal) $\mathcal{B}\ell_0^i(\eta_{ij},\eta_{ik})\neq 0$, where $\mathcal{B}\ell_0^i$ is the classical Blanchfield
  form of $R_i$. Then there exists a constant $C$ such that if $K$ is any knot with Arf$(K)=0$ and $|\rho_0(K)|>C$, the result, $R_{n}\circ\dots\circ R_{1}(K)$, of n-times iterated generalized doubling,
   is of infinite order in the smooth and topological concordance groups, and moreover represents an element of infinite order in $\mathcal{F}_{n}/\mathcal{F}_{n.5}$.
\end{thm}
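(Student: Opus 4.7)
The plan is to generalize the argument of Theorem~\ref{thm:main} and Proposition~\ref{prop:familyofmickeys2} from the specific ribbon knot $9_{46}$ and its two band meridians $\{\alpha,\beta\}$ to arbitrary slice knots $R_i$ with trivial links $\{\eta_{i1},\ldots,\eta_{im_i}\}$. Writing $\mathcal{J}_i \equiv R_i\circ\cdots\circ R_1(K)$, Theorem~\ref{thm:nsolvable} already gives $\mathcal{J}_n\in\mathcal{F}_n$. I would argue by contradiction: assume that, for some $m\geq 1$, the connected sum $\tilde J\equiv \#_{j=1}^{m}\mathcal{J}_n$ is rationally $(n.5)$-solvable with rational $(n.5)$-solution $V$, and derive the conclusion $|\rho_0(K)|\leq C$ for a suitable constant $C$ depending only on the data $(R_i,\{\eta_{ij}\})$; for concreteness one may take $C := \sum_{i=1}^n(m_1m_2\cdots m_i)\,C_{R_i}$, where $C_{R_i}$ is the Cheeger--Gromov constant of $M_{R_i}$.

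The heart of the argument is the recursive construction of $4$-manifolds $W_i$, $0\leq i\leq n$, modeled on Proposition~\ref{prop:familyofmickeys2}, satisfying: $(1)$ $W_i$ is a rational $(n)$-bordism; $(2)$ $\partial W_i$ is a disjoint union of copies of $M_{\mathcal{J}_{n-i}}$ and uncapped $M_{R_\ell}$'s left over from earlier stages, with each inclusion $M_{\mathcal{J}_{n-i}}\hookrightarrow W_i$ sending $\pi_1$ into $\pi^{(i)}$ and sending the meridian to an element of infinite order in $\pi^{(i)}/\pi^{(i+1)}_r$; $(3)$ for any coefficient system $\phi:\pi_1(W_i)\to\Gamma$ with $\Gamma^{(n+1)}=1$, the signature defect $\sigma^{(2)}_\Gamma(W_i)-\sigma(W_i)$ is a non-negative integer multiple of $\rho_0(K)$ plus contributions from the uncapped $M_{R_\ell}$. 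The base $W_0 = V\cup C\cup(\mathcal{Z}\text{-caps})$ would be constructed exactly as in Proposition~\ref{prop:familyofmickeys2}, using the special $(n)$-solutions from Theorem~\ref{thm:nsolvable} to cap off $m-1$ of the $M_{\mathcal{J}_n}$ boundary components.

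The inductive step $W_i\to W_{i+1}$ would attach, to each copy of $M_{\mathcal{J}_{n-i}}\subset\partial W_i$, the cobordism $E$ of Lemma~\ref{lem:mickeyfacts}, whose new boundary components are $M_{R_{n-i}}$ and $m_{n-i}$ copies of $M_{\mathcal{J}_{n-i-1}}$. The essential new input is Theorem~\ref{thm:nontriviality} applied to $W_i$ with coefficient system $\pi\to\pi/\pi^{(i+2)}_r$: the submodule $P\subset\mathcal{A}_0(\mathcal{J}_{n-i})\cong\mathcal{A}_0(R_{n-i})$ of classes killed under the inclusion satisfies $P\subset P^\perp$ with respect to $\mathcal{B}\ell_0^{n-i}$. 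The hypothesis $\mathcal{B}\ell_0^{n-i}(\eta_{(n-i)j},\eta_{(n-i)k})\neq 0$ forces at least one class $[\eta_{(n-i)\ell}]$ to lie outside $P$ (this is the exact analogue of Fact~2 in the proof of Proposition~\ref{prop:familyofmickeys2}). Each $\eta_{(n-i)\ell}$ would then be classified as \emph{Case~I} (image in $\pi^{(i+2)}_r$), in which case one caps off the corresponding $M_{\mathcal{J}_{n-i-1}}$ by a null-cap $Z_{n-i-1}$ from Theorem~\ref{thm:nsolvable} and caps off $M_{R_{n-i}}$ by the slice-disk exterior $\mathcal{R}_{n-i}$ of $R_{n-i}$, or \emph{Case~II}, in which case one leaves the corresponding $M_{\mathcal{J}_{n-i-1}}$ (and, if some Case~II occurs, one $M_{R_{n-i}}$) exposed in $\partial W_{i+1}$. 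Verification of the three properties would proceed exactly as in Proposition~\ref{prop:familyofmickeys2}: the rational $(n)$-Lagrangian with duals is assembled from those of $V$, the $\mathcal{Z}$-caps, and the null-caps, while the signature defects of $E$, $\mathcal{R}_{n-i}$, and the null-caps vanish by Lemma~\ref{lem:mickeysig}, Theorem~\ref{thm:sliceobstr} (slice-disk exteriors are $(n.5)$-solutions), and triviality of the induced coefficient system on the null-caps, respectively.

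At stage $n$, property $(2)$ guarantees that each $M_K$-component of $\partial W_n$ carries a nontrivial coefficient system factoring through the abelianization, so by Proposition~\ref{prop:rho invariants}$(4)$ each such component contributes $\rho_0(K)$ to $\rho(\partial W_n,\phi)$. Combining with the signature-defect bound $(3)$ and the Cheeger--Gromov bound $|\rho(M_{R_\ell},\phi_\ell)|\leq C_{R_\ell}$ for each uncapped $M_{R_\ell}$, I obtain an equation of the form $N\,\rho_0(K) = -\sum_\ell\varepsilon_\ell\,\rho(M_{R_\ell},\phi_\ell)$ with $N\geq 1$ a positive integer and the right-hand side bounded in absolute value by $C$, forcing $|\rho_0(K)|\leq C$. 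The main obstacle is the inductive propagation of property~$(2)$, specifically establishing the isomorphism $\pi_1(W_i)/\pi_1(W_i)^{(i+2)}_r\cong \pi_1(W_{i+1})/\pi_1(W_{i+1})^{(i+2)}_r$ so that the Blanchfield-form information at stage $i$ transfers unchanged to stage $i+1$; this hinges on the hypothesis $\eta_{ij}\in\pi_1(S^3-R_i)^{(1)}$, which ensures that the longitudes of the infecting knots lie sufficiently deep in the derived series to die under the truncation, mirroring the corresponding step in the proof of Proposition~\ref{prop:familyofmickeys2}.
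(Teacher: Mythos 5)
Your proposal is essentially the same recursive construction the paper uses (Proposition~\ref{prop:familyofmickeys3}): build $W_0=V\cup C\cup(\mathcal{Z}\text{-caps})$, attach copies of $E$ at each stage, invoke Theorem~\ref{thm:nontriviality} to get the self-annihilating submodule $P$, and use the nonsingularity hypothesis on $\mathcal{B}\ell_0^i$ to guarantee at least one $\eta_{ij}$ survives outside $P$ at every stage, so that $\partial W_n$ always carries at least one $M_K$-component with nontrivial coefficient system. The final estimate $|\rho_0(K)|\leq C$ then follows from the Cheeger--Gromov bounds on the uncapped $M_{R_i}$'s. The argument is correct.

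There are two small deviations from the paper worth noting. First, you keep the $\mathcal{R}$-caps and null-caps from Proposition~\ref{prop:familyofmickeys2}, whereas the paper explicitly remarks that in this general case one should use \emph{no} $\mathcal{R}$-caps and simply let the $M_{R_i}$ boundary components persist to $\partial W_n$. This matters: a general slice knot $R_i$ has no canonical slice disk adapted to a chosen subset of the $\eta_{ij}$'s (unlike $9_{46}$ with its two-band ribbon structure), so one cannot a priori choose a slice-disk exterior $\mathcal{R}_{n-i}$ whose fundamental-group kernel lies in $\pi_1(W_{i+1})^{(i+2)}_r$, and a badly chosen cap could destroy the needed isomorphism $\pi_1(W_i)/\pi_1(W_i)^{(i+2)}_r\cong\pi_1(W_{i+1})/\pi_1(W_{i+1})^{(i+2)}_r$. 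As you set things up, the $\mathcal{R}$-cap is applied only when \emph{all} $\eta_{(n-i)\ell}$ are Case~I, which never occurs because the Blanchfield hypothesis forces at least one Case~II; so in practice your $\mathcal{R}$-caps are vacuous and your argument coincides with the paper's. But it would be cleaner to drop them outright. The null-caps are genuinely optional bookkeeping (the paper notes both choices work), and the justification you give --- ``triviality of the induced coefficient system'' --- is, strictly speaking, that the image of $\pi_1(\mathcal{N}_i)$ lies in $\Gamma^{(i+2)}_r$, so the image group $\Gamma_{\mathcal{N}}$ satisfies $\Gamma_{\mathcal{N}}^{(n-i-1)}=1$, which is what makes the $(n-i-1)$-solution's signature defect vanish; this is the same imprecision the paper itself commits. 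Second, your explicit choice $C=\sum_{i=1}^n(m_1\cdots m_i)\,C_{R_i}$ has the indices reversed: with the paper's convention $R_n\circ\cdots\circ R_1$ (so $R_1$ is innermost), the number of $M_{R_i}$ components in $\partial W_n$ is $m_n m_{n-1}\cdots m_{i+1}$, so the correct bound is $\sum_{i=1}^n(m_{i+1}\cdots m_n)\,C_{R_i}$; the paper sidesteps this by using the cruder $\bigl(\frac{m^n-1}{m-1}\bigr)\max_i C_{R_i}$. This is a bookkeeping slip that does not affect the structure of the argument.
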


There are situations where the constant can be taken independent of $n$ (as in Theorem~\ref{thm:main}) but we shall not state it in generality.

\begin{proof}[Proof of Theorem~\ref{thm:main3}] Recall that Arf$(K)=0$ if and only $K$ is $(0)$-solvable by Remark~\ref{rem:0solv}. Since the infections are being done along curves that lie in the commutator subgroup, any $n$-times iterated operator applied to such a knot $K$ results in an $(n)$-solvable knot by Theorem~\ref{thm:nsolvable}. Let $\mathcal{J}_n$ denote the result of such an operator. Choose $C'$ to be the maximum of the Cheeger-Gromov constants for the $\{M_{R_i}\}$. Let $m$ be the maximum of the $m_j$. Choose $C$ such that
$$
\left(\frac{m^n-1}{m-1}\right)C'\leq C.
$$
The proof then proceeds exactly like that of part $1$ of Theorem ~\ref{thm:main} above. Suppose that a non-trivial multiple, $\tilde{\mathcal{J}}$, of $\mathcal{J}_n$ were rationally $(n.5)$-solvable. We show that $|\rho_0(K)|\leq C$. We recursively construct $4$-manifolds $W_i$ as in Proposition~\ref{prop:familyofmickeys2}. The primary difference in the argument is that the various cobordisms, $E_i$, that arise have $2+m_i$ boundary components. Letting $\mathcal{J}_i=R_{i}\circ\dots\circ R_{1}(K)$ and $M_{i}=M_{\mathcal{J}_i}$ one establishes recursively:

\begin{prop}\label{prop:familyofmickeys3} Under the assumption that $\tilde{\mathcal{J}}$ is rationally $(n.5)$-solvable, for each $0\leq j\leq n$ there exists a $4$-manifold $W_j$ with the following properties. Letting $\pi=\pi_1(W_j)$,

\begin{itemize}
\item [(1)] $W_j$ is a rational $(n)$-bordism whose boundary is a disjoint union of copies of $M_{R_i}$ (the total number of copies being at most $\frac{m^i-1}{m-1}$), together with a positive number of copies of $M_{n-j}$~;
\item [(2)] Each inclusion $j:M_{n-j}\subset\partial W_j\to W_j$ satisfies
$$
j_*(\pi_1(M_{n-j}))\subset \pi^{(j)};
$$
and
$$
j_*(\pi_1(M_{n-j}))\cong \mathbb{Z}\subset \pi^{(j)}/\pi^{(j+1)}_r;
$$
\item [(3)] For any PTFA coefficient system $\phi:\pi_1(W_j)\to\G$ with $\G^{(n+1)}_r=1$
$$
\rho(\partial W_j,\phi)\equiv\sigma_\G^{(2)}(W_j,\phi)-\sigma(W_j)=-D\rho_0(K)
$$
for some non-negative integer $D$ (depending on $\phi$).
\end{itemize}
\end{prop}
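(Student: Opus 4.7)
The plan is to establish Proposition~\ref{prop:familyofmickeys3} by induction on $j$, running in parallel with the proof of Proposition~\ref{prop:familyofmickeys2} but adapted so that each infection cobordism has $2+m_{i}$ boundary components rather than $4$. For the base case $W_0$, I would take a rational $(n.5)$-solution $V$ for $\tilde{\mathcal{J}}$, glue on the standard cobordism $C$ from $M_{\tilde{\mathcal{J}}}$ to a disjoint union of copies of $M_n$, and cap off all but one of these copies with the special $(n)$-solution $Z_n$ of Theorem~\ref{thm:nsolvable}. All three properties follow exactly as for $W_0$ in the proof of Proposition~\ref{prop:familyofmickeys2}: the $H_2$ analysis still gives a rational $(n)$-bordism, property (2) is trivial at this stage, and property (3) is Theorem~\ref{thm:nsolvable} together with Lemma~\ref{lem:mickeysig} and Theorem~\ref{thm:oldsliceobstr}. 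At this level $r(0)=0$, consistent with the bound.

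For the inductive step, assume $W_j$ has been constructed. Along each copy of $M_{n-j}\subset\partial W_j$ I would glue the cobordism $E_{n-j}$ corresponding to the $m_{n-j}$-fold infection operator $R_{n-j}$, whose boundary consists of $M_{n-j}$, $M_{R_{n-j}}$, and $m_{n-j}$ copies of $M_{n-j-1}$. The crucial step is to determine which infection curves $\eta_{(n-j)k}$ survive in the coefficient system $\pi_1(W_j)\to\pi_1(W_j)/\pi_1(W_j)^{(j+2)}_r$. Applying Theorem~\ref{thm:nontriviality} to $W_j$ with this coefficient system, the kernel of the induced map on $\mathcal{A}_0(\mathcal{J}_{n-j})\cong\mathcal{A}_0(R_{n-j})$ is a submodule $P\subset P^\perp$ with respect to the classical Blanchfield form $\mathcal{B}\ell_0^{n-j}$. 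The hypothesis $\mathcal{B}\ell_0^{n-j}(\eta_{(n-j)j'},\eta_{(n-j)k'})\neq 0$ for some $j',k'$ then forces the span of $\{\eta_{(n-j)1},\ldots,\eta_{(n-j)m_{n-j}}\}$ not to lie in $P$, so at least one curve survives. For each curve $\eta_{(n-j)k}$ that \emph{dies}, cap off the corresponding copy of $M_{n-j-1}\subset\partial E_{n-j}$ with the special null-cap $Z_{n-j-1}$ and cap off the new $M_{R_{n-j}}$ with the ribbon disk exterior $\mathcal{R}_{n-j}$ determined by the ribbon move through the band corresponding to $\eta_{(n-j)k}$; for a surviving curve, leave both new boundary components uncapped.

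The verification of property (1) runs exactly as in the proof of Proposition~\ref{prop:familyofmickeys2}: a Mayer--Vietoris computation shows that $H_2(W_{j+1};\mathbb{Q})/I_0$ comes from $V$, the $\mathcal{Z}$-caps, the null-caps, and the surviving boundary components, each of which is itself a rational solution of sufficiently high order. The $(n-j-1)$-surfaces generating the Lagrangian in a null-cap lift to $(n)$-surfaces in $W_{j+1}$ because the meridian of $M_{n-j-1}$ is identified in $E_{n-j}$ with a push-off of the dying curve, which by construction lies in $\pi_1(W_{j+1})^{(j+2)}_r$ (the analogue of Fact 5). Property (2) is then immediate from the choice of surviving curve together with equation~(\ref{eq:samepi}) generalized to this setting. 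Property (3) follows from additivity of signatures: each $E_{n-j}$ contributes zero by Lemma~\ref{lem:mickeysig}, each $\mathcal{R}$-cap contributes zero by Theorem~\ref{thm:oldsliceobstr}, and each null-cap $Z_{n-j-1}$ carries a coefficient system that factors through $\pi_1(W_{j+1})^{(n+1)}_r=1$, hence is trivial.

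The count $r(i)\le\frac{m^i-1}{m-1}$ is a geometric series estimate: if $b(j)$ denotes the number of copies of $M_{n-j}$ in $\partial W_j$ (at least one by property~(2)), then $b(j+1)\le m\cdot b(j)$ where $m=\max m_i$, so $b(j)\le m^j$, and each $E$ contributes at most one new $M_{R_i}$, yielding $r(j+1)\le r(j)+b(j)\le 1+m+\cdots+m^j$. The main obstacle is the generalization of the Case I/Case II dichotomy: here we must handle an arbitrary subset of $\{\eta_{i1},\ldots,\eta_{im_i}\}$ dying, and the Blanchfield hypothesis is precisely what prevents total collapse and thereby preserves property (2) through the induction. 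Once the proposition is in hand, the contradiction argument of Theorem~\ref{thm:main} (part 1) transfers verbatim: plugging $W_n$ into the coefficient system $\pi_1(W_n)\to\pi_1(W_n)/\pi_1(W_n)^{(n+1)}_r$ and using the Cheeger-Gromov bound $C'$ on each $\rho(M_{R_i},\phi_i^R)$ yields $|\rho_0(K)|\le \frac{r(n)}{r(n)+1+D}\,C'\le\frac{m^n-1}{m-1}\,C'\le C$, contradicting the choice of $K$.
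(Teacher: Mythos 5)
Your base case and the overall inductive shape are right, and your use of Theorem~\ref{thm:nontriviality} plus the Blanchfield hypothesis to guarantee a surviving $\eta_{(n-j)k}$ at each stage matches the paper. But there is a genuine gap in the cap-off step.

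You prescribe capping off the newly exposed $M_{R_{n-j}}$ with a ``ribbon disk exterior $\mathcal{R}_{n-j}$ determined by the ribbon move through the band corresponding to $\eta_{(n-j)k}$.'' This move is not available in the generality of Theorem~\ref{thm:main3}: the hypotheses only assume $R_i$ is a \emph{slice} knot (not ribbon), and $\{\eta_{i1},\dots,\eta_{im_i}\}$ is an arbitrary trivial link of circles in $\pi_1(S^3-R_i)^{(1)}$ --- there is no ``band'' associated to $\eta_{ij}$, and no reason any slice disk for $R_i$ should kill a chosen $\eta_{ij}$ under inclusion of fundamental groups. (In Propositions~\ref{prop:familyofmickeys} and~\ref{prop:familyofmickeys2} this worked only because $R=9_{46}$ is a genus-one ribbon knot with the two band meridians $\alpha,\beta$ generating the Alexander module, and cutting a band gives a ribbon disk annihilating the corresponding meridian.) Moreover, even bracketing the existence question, each copy of $E_{n-j}$ has only \emph{one} $M_{R_{n-j}}$ boundary component, whereas your instruction asks to cap it ``for each dying $\eta_{(n-j)k}$'' and to leave it ``uncapped'' when a curve survives --- these branches collide whenever some curves die and others survive, which is the generic situation.

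The paper explicitly avoids this: in the proof of Theorem~\ref{thm:main3} there are \emph{no} $\mathcal{R}$-caps. The $M_{R_i}$ boundary components created at each stage are simply left to persist. This is why property~(1) of Proposition~\ref{prop:familyofmickeys3} allows a (bounded) collection of $M_{R_i}$ in $\partial W_j$, and why the final estimate sums the Cheeger--Gromov bound $C'$ over these persistent boundary components rather than killing their $\rho$-contribution. The null caps $Z_{n-j-1}$ you use for dying copies of $M_{n-j-1}$ are fine (they are the special $(n-j-1)$-solutions from Theorem~\ref{thm:nsolvable}, which exist unconditionally); that is exactly what the paper does. Your geometric-series count $r(n)\le 1+m+\cdots+m^{n-1}=\frac{m^n-1}{m-1}$ and the resulting inequality $|\rho_0(K)|\le\frac{m^n-1}{m-1}C'$ are also correct and match the paper, provided the $\mathcal{R}$-cap step is simply deleted and the $M_{R_i}$ are allowed to remain in the boundary.
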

Assuming this and applying it in the case $j=n$ one deduces:
\begin{equation}\label{eq:9.1}
(k+D)\rho_0(K)=-\sum_{i=1}^{r(i)}\rho_0(M_{R_i},\phi^R_i).
\end{equation}
where $k$ is the number of boundary components of $W_n$ that are copies of $M_K$ and $r(i)$ is the number of boundary components of $W_n$ that are copies of $M_{R_i}$. Thus
\begin{equation}\label{eq:9.2}
|\rho_0(K)|\leq \frac{r(1)+...+r(n)}{k+D}C'\leq \left(\frac{m^n-1}{m-1}\right) C' \leq C.
\end{equation}
The crucial point is that $k\geq 1$. Hence if $|\rho_0(K)|> C$ then $\tilde{J}$ is not is rationally $(n.5)$-solvable, thereby completing the proof.

In the construction of the $W_j$, in this general case, there will be no $\mathcal{R}$-caps. The $M_{R_i}$ boundary components that appear at each level are allowed to persist. The key point is that the analogue of Fact $2$ (proof of Proposition~\ref{prop:familyofmickeys2}) applies. This fact, together with our hypothesis on the $\eta_{ij}$, ensures that the kernel $P$ of Fact $2$ cannot contain every $\eta_{ij}$, so that there is always at least one $\eta_{ij}$ that maps nontrivially (as in Fact 2). This translates into the fact that $W_j$ always has at least one boundary component of the form $M_{n-j}$. Then it is an easy combinatorial exercise to see that if one never has any null caps that the number of copies of $M_K$ in $\partial W_n$ is precisely $0+1+m_n+m_nm_{n-1}+m_nm_{n-1}m_{n-2}+...+m_nm_{n-1}...m_2$ which is at most $1+m+m^2+...+m^{n-1}$. This is the maximum number of copies possible. The number is not very important-just the fact that there is a bound independent of $K$. The proof is completed just as in the proof of Proposition~\ref{prop:familyofmickeys2}.
\end{proof}

A nice application of the more general theorem is the following which gives new information about the concordance order of knots that previously could not be distinguished from an order two knot.

\begin{cor}\label{cor:torsion} For any $n>0$ there is a constant $D$ such that if $|\rho_0(J_0)|>D$ then the knot $K_{n}$ of Figure~\ref{fig:torsionfigeight} is of infinite order in the topological and
 smooth concordance groups.
\end{cor}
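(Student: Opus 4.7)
The plan is to reduce to Theorem~\ref{thm:main3} by working with the connected sum $K_n \# K_n$ rather than with $K_n$ directly, exploiting the amphichirality of the figure-eight knot $\bar R$.

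Although $\bar R$ is not slice, it is $(-)$-amphichiral, so $\bar R \# \bar R = \bar R \# (-\bar R)$ is slice. Writing $K_n = \bar R(J_{n-1})$ as the infection of $\bar R$ along its two band meridians $\alpha,\beta$, and using that connected sum of infected knots equals infection on the connected sum, we have
\[
K_n \# K_n \;=\; (\bar R \# \bar R)\bigl(J_{n-1}, J_{n-1}, J_{n-1}, J_{n-1}\bigr),
\]
the result of infecting the slice knot $\bar R \# \bar R$ along its four band meridians $\alpha_1,\beta_1,\alpha_2,\beta_2$ by four copies of $J_{n-1}$. Since $J_{n-1} = R \circ R \circ \cdots \circ R(J_0)$ is itself an $(n-1)$-fold iterate of the doubling operator based on the slice knot $R = 9_{46}$, this factorization exhibits
\[
K_n \# K_n \;=\; (\bar R \# \bar R) \circ R \circ R \circ \cdots \circ R \,(J_0)
\]
as an $n$-times iterated generalized doubling using only slice knots. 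All the infection curves are band meridians and therefore lie in the commutator subgroups of the relevant knot groups.

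To invoke Theorem~\ref{thm:main3} I must verify the Blanchfield-pairing hypothesis at each level. For $R = 9_{46}$, the band meridians satisfy $\mathcal{B}\ell_0(\alpha,\beta) \neq 0$, since $\alpha,\beta$ generate the Alexander module, $\langle\alpha\rangle$ and $\langle\beta\rangle$ are self-annihilating, and $\mathcal{B}\ell_0$ is non-singular. For $\bar R \# \bar R$, the Alexander module is the direct sum of two copies of the field $\mathbb{Q}[t,t^{-1}]/(t^2-3t+1)$; each band meridian generates its summand, and since the Blanchfield form restricted to a single summand is a non-singular Hermitian form on a cyclic module, the self-pairing $\mathcal{B}\ell_0(\alpha_1,\alpha_1)$ is non-zero (otherwise $\mathcal{B}\ell_0(\alpha_1,-)$ would vanish identically). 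This verifies the hypothesis in the ``$ij=ik$ possibly equal'' case. Taking as a standing hypothesis that $\operatorname{Arf}(J_0)=0$, Theorem~\ref{thm:main3} then produces a constant $D = D_n$, depending only on $n$, such that $|\rho_0(J_0)| > D$ implies $K_n \# K_n$ has infinite order in both the smooth and topological concordance groups.

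Finally, the passage from $K_n \# K_n$ back to $K_n$ is formal: if $K_n$ had finite order $d$ in $\mathcal{C}$, then $d(K_n\#K_n) = 2d\,K_n = 0$, so $K_n \# K_n$ would have finite order as well, contradicting the previous step. Hence $K_n$ is of infinite order whenever $|\rho_0(J_0)| > D$. The principal technical obstacle is the verification of $\mathcal{B}\ell_0(\alpha_1,\alpha_1) \neq 0$ for the figure-eight, which is precisely where the ``possibly equal'' clause of Theorem~\ref{thm:main3} becomes essential; once that diagonal non-vanishing is in place, the corollary is a direct consequence of the main theorem combined with the amphichirality trick.
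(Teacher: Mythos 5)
Your proposal is correct and follows essentially the same route as the paper: reduce to $K_n\# K_n$, recognize it as an $n$-fold iterated doubling $(\bar R\#\bar R)\circ R\circ\cdots\circ R(J_0)$ built from slice knots with infection curves in the commutator subgroup, and invoke Theorem~\ref{thm:main3}. The only cosmetic differences are that the paper phrases things in terms of all even multiples $\#^{2k}K_n$ and disposes of the odd multiples separately via the Arf invariant, whereas you apply the theorem once (at $k=1$), use its ``infinite order'' conclusion, and deduce infinite order of $K_n$ formally; you also spell out the Blanchfield non-vanishing $\mathcal{B}\ell_0(\alpha_1,\alpha_1)\ne 0$ for the figure-eight summand explicitly, where the paper simply cites the remark after Theorem~\ref{thm:main3} about generating sets.
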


\begin{proof}[Proof of Corollary~\ref{cor:torsion}] Any odd multiple of $K_{n}$ has Arf invariant one and hence is not a slice knot, nor even $(0)$-solvable. Let $J=\#^{2k}K_{n}$. Let $\tilde{R}$ denote
 the ribbon knot obtained as a connected sum of $2k$ copies of the figure eight knot. Then $J$ is obtained from $\tilde{R}$ by $4k$ infections along a generating set for the Alexander module of $\tilde{R}$, using
  the knot $J_{n-1}$ in each case. Recall that $J_{n-1}=R\circ...\circ R(J_0)$ ($n-1$ times) using the operator $R_{\{\alpha,\beta\}}$ of Figure~\ref{fig:Rdoubling}. Thus
$$
J=\tilde{R}\circ R\circ...\circ R(J_0)
$$
and Theorem~\ref{thm:main3} applies  to show that $J$ is not slice.
\end{proof}

 \bibliographystyle{plain}
\bibliography{mybib5}
\end{document}